\documentclass[10pt, a4paper]{article}

\usepackage{color}

\definecolor{ggreen}{rgb}{0,0.5,0.5}
\definecolor{brown}{rgb}{0.8,0.4,0.5}

\definecolor{lb}{rgb}{0.5,0.4,0.9}


\usepackage{times}
\usepackage[left=2.6cm, right=2.6cm, top=2.6cm, bottom=2.6cm]{geometry}

\usepackage[latin1]{inputenc}
\usepackage[T1]{fontenc}
\usepackage[english,ngerman]{babel}

\usepackage{amsthm}
\usepackage{amsmath}
\usepackage{amssymb}

\usepackage{paralist}
\usepackage[arrow, matrix, curve]{xy}

\usepackage{booktabs, array, dcolumn}
\usepackage{tabularx}

\usepackage{url}

\usepackage{float}
\usepackage{graphicx}
\usepackage{caption}
\usepackage{subcaption}

\usepackage{dsfont}

\usepackage{verbatim}


\usepackage[linkbordercolor={0 0 1}]{hyperref}



\newtheoremstyle{j}%
{3pt}%
{3pt}%
{}%
{\parindent}%
{\bfseries}%
{.}%
{.5em}%
{}%

\theoremstyle{plain}


\newtheorem*{rem*}{Remark}
\newtheorem*{concl*}{Conclusion}
\newtheorem*{theorem*}{Theorem}
\newtheorem*{cor*}{Corollary}
\newtheorem*{algo*}{Algorithm}

\newtheorem{theorem}{Theorem}[section]
\newtheorem{lem}[theorem]{Lemma}

\newtheorem{prop}[theorem]{Proposition}
\newtheorem{deff}[theorem]{Definition}
\newtheorem{rem}[theorem]{Remark}

\theoremstyle{definition}

\newtheorem*{example*}{Example}
\newtheorem{example}[theorem]{Example}
\newtheorem*{prob*}{Problem}

\newcommand{\Hil}{\mathcal{H}}

\newcommand{\Scal}{\mathcal{S}}

\newcommand{\Rcal}{\mathcal{R}}

\newcommand{\Z}{\mathbb{Z}}
\newcommand{\N}{\mathbb{N}}

\newcommand{\R}{\mathbb{R}}

\newcommand{\C}{\mathbb{C}}

\newcommand{\fhat}{\widehat{f}}

\newcommand{\phihat}{\widehat{\phi}}
\newcommand{\phiint}{\phi^{\textint}}
\newcommand{\phiright}{\phi^{\textright}}
\newcommand{\phileft}{\phi^{\textleft}}
\newcommand{\psiint}{\psi^{\textint}}
\newcommand{\psiright}{\psi^{\textright}}
\newcommand{\psileft}{\psi^{\textleft}}

\newcommand{\bitem}{\begin{itemize}}
\newcommand{\eitem}{\end{itemize}}
\newcommand{\beq}{\begin{equation}}
\newcommand{\eeq}{\end{equation}}
\newcommand{\beqn}{\begin{eqnarray*}}
\newcommand{\eeqn}{\end{eqnarray*}}
\newcommand{\ip}[2]{\left\langle#1,#2\right\rangle}

\newcommand{\cH}{{\cal H}}

\DeclareMathOperator{\textint}{b}
\DeclareMathOperator{\textright}{right}
\DeclareMathOperator{\textleft}{left}

\DeclareMathOperator{\diag}{diag}
\DeclareMathOperator{\spann}{span}
\DeclareMathOperator{\Id}{Id}

\DeclareMathOperator{\suppp}{supp \,}

\DeclareMathOperator{\diam}{diam}

\newcommand{\ti}{\textit}

\numberwithin{equation}{section}

\newcommand{\operp}{\mathop{\bigcirc\kern-12.75pt\perp}\nolimits}



\allowdisplaybreaks

\begin{document}

\selectlanguage{english}

\title{Linear Stable Sampling Rate: Optimality of 2D Wavelet Reconstructions from Fourier Measurements}

\author{Ben Adcock, Anders C. Hansen, Gitta Kutyniok, and Jackie Ma}

\maketitle

\begin{abstract}
In this paper we analyze two-dimensional wavelet reconstructions from Fourier samples within the framework of generalized
sampling. For this, we consider both separable compactly-supported wavelets and boundary wavelets. We prove that the number
of samples that must be acquired to ensure a stable and accurate reconstruction scales linearly with the number of reconstructing wavelet functions. We also provide numerical experiments that corroborate our theoretical results.
\end{abstract}


\section{Introduction}

A problem that appears in multiple disciplines is the reconstruction of an object from linear measurements. One
special situation of particular importance which we will focus on in this
paper are  Fourier measurements.
This particular reconstruction problem occurs in numerous applications
such as Fourier optics, radar imaging, magnetic resonance imaging (MRI) and X-ray CT (the latter after application of the Fourier slice theorem). One of the main issues is that we are only able
to acquire finitely many samples, since we cannot process an infinite amount of information in practice. The reconstruction
of an object from a finite collection of Fourier samples can be obtained by a truncated Fourier series. However, this typically
leads to undesirable effects such as the Gibbs phenomenon, which are wild oscillation near points of discontinuity.

Beside the Gibbs phenomenon, the convergence of the Fourier series (in the Euclidean norm) is slow.  Conversely, wavelets
bases are well-known to achieve much better results (see \cite[Ch.\ 9]{Mallat}).  Indeed, wavelets have much better localization properties than the
standard Fourier transform, leading to a better detection of image features.
For this reason, wavelets have found widespread used in compression and denoising. For example the algorithm of JPEG 2000 is based on wavelets.
Moreover, they come equipped with fast algorithms which are of great importance in today's age of technology. Wavelets also play a pivotal role in biomedical imaging, with an example being the technique of wavelet encoding in MRI (see \cite{HealyWeaver, Panych, UnserAldroubiWaveletReview, UnserAldroubiLaineEditorial}).

The issue, however, is that physical samplers such as an MRI scanner naturally yield Fourier measurements, not wavelet coefficients.  Thus in order to exploit the power of wavelets, we need a reconstruction algorithm capable of producing wavelet coefficients given a fixed set of Fourier measurements.


\subsection{Generalized sampling}

Generalized sampling is a framework for this problem developed by two of the authors in a series of papers \cite{AH2,AH1,AH3,AHHT1} and based on past work of Unser and Aldroubi \cite{unser2000sampling,UnserAld}, Eldar \cite{Eldar1} and Hrycak and Gr\"{o}chenig  \cite{hrycakIPRM}. The theory allows for stable and accurate reconstructions in an arbitrary reconstruction system of choice given fixed measurements with respect to another system.

The problem of reconstructing wavelet coefficients from Fourier samples is an important example of this abstract framework.
Mathematically, the reconstruction problem can be modeled in a separable Hilbert space $\Hil$, resulting in an infinite-dimensional linear algebra problem.  Generalized sampling provides a faithful discretization
of such a problem. The stable sampling rate (see Section \ref{sec:generalized_sampling} for details) is fundamental characteristic within generalized sampling that determines how
many samples are needed in order to obtain stable and accurate reconstructions with a given number of reconstruction elements.  It is therefore vital that this rate be determined for important instances of generalized sampling.


\subsection{Our contribution}

In \cite{AHP1} the respective authors proved the linearity of the stable sampling rate for one-dimensional compactly
supported wavelets based on finitely many Fourier samples. This means, up to a constant, one needs the same number of
samples as reconstruction elements.  Our results extend the previous one to dimension two, although higher dimensional
results can be obtained in a straightforward manner. This is an important extension, since most of the above applications involve two- or three-dimensional images.  The crucial part that makes our result non-trivial is the allowance
of non-diagonal scaling matrices neglecting straightforward arguments for separable two dimensional wavelets from 1D to
2D. Moreover, we will not only prove the linearity for standard two- dimensional separable wavelets, but also for two-dimensional boundary wavelets which are of particular interest for smooth images.  This case was not considered in \cite{AHP1} but was addressed recently in \cite{BAMGACHNonuniform1D} for the case on 1D nonuniform Fourier samples.  Here, for simplicity, we consider only uniform samples but in the 2D setting.

At this stage we note that other higher dimensional concepts, such as curvelets and shearlets, can provide better approximations
rates for cartoon-like-images; a specific class of functions (\cite{CandesDonoho}, \cite{KutLim}).  However, this paper serves as an extension of known 1D
results \cite{AHP1}. It thus provides a necessary first step in the study of reconstructions from Fourier samples within the context of
generalized sampling in higher-dimensional settings. We shall discuss shearlets in an upcoming paper.

Let us now make one further remark.  The reader may at this stage wonder why, given a vector $y$ of Fourier samples of a 2D image, one cannot simply form the vector $x = U^{-1}_{\mathrm{dft}}y$, and then form $z = V_{\mathrm{dwt}}x$ and hope that $z$ would represent wavelet coefficients of the function $f$ to be reconstructed (here $U_{\mathrm{dft}}$ and $V_{\mathrm{dwt}}$ denote the discrete Fourier and wavelet transforms respectively).  Unfortunately, $x$ represents a discretization of the truncated Fourier series of $f$. Thus, ignoring the wavelet crime \cite[p.\ 232]{Strang} for a moment, we find that $z$ represents the wavelet coefficients of the truncated Fourier series and not the wavelet coefficients of $f$ itself (taking the wavelet crime into account, $z$ would actually be an approximation to the wavelet coefficients of the truncated Fourier series). Thus, $z$ will typically have lost all the decay properties of the original wavelet coefficients. Moreover, if we map $z$ back to the image domain we get $x = V_{\mathrm{dwt}}^{-1}z$ and thus we do not gain anything as $x$ is the discretized truncated Fourier series.

This paper is about getting the actual wavelet coefficients of $f$ from the Fourier samples, thus preserving all the decay properties of the original coefficients. This is done using generalized sampling.  We show that the number
of samples that must be acquired to ensure a stable and accurate reconstruction scales linearly with the number of reconstructing wavelet functions. This means that one can reconstruct a function from its Fourier coefficients yet get error bounds on the reconstruction (up to a constant) in terms of the decay properties of the wavelet coefficients. Put in short: seeing Fourier coefficients of a function is asymptotically as good as seeing the wavelet coefficients directly.  As we will see in the numerical experiments, the generalized sampling reconstruction provides a substantial gain over the classical Fourier reconstruction.


The outline of the remainder of this paper is as follows.  In Section 2 we give a more elaborate introduction into generalized sampling and present the main results of this method.  Furthermore, we introduce the stable sampling rate to which our main focus is devoted. Having presented the framework we are dealing with, we introduce the wavelet reconstruction systems and the Fourier sampling systems in Section 3. The main results are then presented in Section 4. Finally we demonstrate our theoretical results in application by presenting some numerical experiments in Section 5.  Proofs of the main results are given in Section 6.


\section{Generalized Sampling}
\label{sec:generalized_sampling}

In this section, we recall the main definitions and results of the methodology of generalized sampling from \cite{AH2,AH1,AH3,AHHT1}.
For this, we start by introducing a general model situation for reconstruction from samples with associated quality measures.


\subsection{General Setting}

Let $\cH$ be a (separable) Hilbert space $\Hil$ with an inner product $\langle \cdot , \cdot \rangle$, which will be
our ambient space throughout this section. For modeling the acquisition of samples, let $ \Scal \subset \Hil$ be a
closed subspace and $\{s_k\}_{k \in \N} \subset \Hil$ be an orthonormal basis for $\Scal$. We will refer to
$\{s_k\}_{k \in \N}$ as the \ti{sampling system} and $\Scal$ as the \ti{sampling space}. For a signal $ f\in \Hil$,
we then assume that the associated samples (also called {\it measurements}) are given by
\beq \label{eq:samples}
    m(f)_k := \langle f, s_k \rangle, \quad k \in \N.
\eeq
Based on the measurements $m(f)=(\langle f, s_k\rangle)_{k \in \N}$, we aim to reconstruct the original signal $f$.
To be able to utilize some prior knowledge concerning the initial signal $f$, we also require a carefully chosen
\ti{reconstruction system} $\{r_i\}_{i \in \N} \subset \Hil$. The space $\Rcal = \overline{\spann}\{ r_i \, : \, i \in \N\}$,
in which $f$ is assumed to lie or be well approximated in, is then referred to as the corresponding \ti{reconstruction space}.

Since in an algorithmic realization, only finitely many samples -- and likewise a finite linear combination of
reconstruction elements -- is possible, we also introduce the finite-dimensional spaces
\[
    \Scal_M = \spann \{s_1, \ldots, s_M\}, \quad M \in \N
\]
and
\[
    \Rcal_N = \spann \{r_1, \ldots, r_N\}, \quad N \in \N.
\]
Thus, the reconstruction problem can now be phrased as follows: Given samples $m(f)_1, \ldots, m(f)_M$ of an initial
signal $f \in \cH$, compute a good approximation to $f$ in the reconstruction space $\Rcal_N$. Aiming to compare
different methodologies for solving this problem, we next formally introduce the notion of {\it reconstruction method}.

\begin{deff} \label{def:reconstructionmethod}
Let a sampling system $\{s_k\}_{k \in \N}$ and reconstruction spaces $\Rcal_N$, $N \in \N$ be defined as before.
Further, let $f \in \cH$ and let $M, N \in \N$. Then some mapping
\[
F_{N,M} : \Hil \longrightarrow \Rcal_N
\]
is called \emph{reconstruction method}, if, for every $f \in \cH$, the signal $F_{N,M}(f)$ depends only on
$m(f)_1, \ldots, m(f)_M$, where the samples $m(f)_j$ are defined in \eqref{eq:samples}.
\end{deff}

We can now strengthen the reconstruction problem in the following way: Given the dimension of the reconstruction
space $N$, how many samples $M$ are required to obtain a stable and optimally accurate reconstruction method?
This intuitive phrasing will be made precise in the next subsections.


\subsection{Quality Measures for Reconstruction Methods}
\label{subsec:QualityMeasures}

We start by introducing two quality measures for reconstruction methods that analyze the degree of approximation
within the reconstruction space and robustness for reconstruction. For this, throughout this subsection, let
$\{s_k\}_{k \in \N}$ be a sampling system, and let $\Rcal_N$, $N \in \N$ be reconstruction spaces.

The first measure quantifies the closeness of the reconstruction to the `best' reconstruction in the sense of the orthogonal
projection onto the reconstruction space. In the sequel, for the orthogonal projection onto a closed subspace $U$,
we will always utilize the notation $P_{U} : \Hil \longrightarrow U$.

\begin{deff}[\cite{AHP2}]
Let $F_{N,M}: \Hil \longrightarrow \Rcal_N$ be a reconstruction method. Moreover, let $\mu = \mu(F_{N,M})>0$ be the
least number such that
\[
    \| f - F_{N,M}(f) \| \leq \mu \| f - P_{\Rcal_N} (f) \| \quad \text{for all }  f \in \Hil.
\]
Then we call $\mu$ the \emph{quasi-optimality constant} of $F_{N,M}$. If no such constant exists, then we write
$\mu = \infty$. If $\mu$ is small, we say $F_{N,M}$ is \emph{quasi-optimal}.
\end{deff}

The second measure quantifies stability in the sense of robustness against perturbations.

\begin{deff}[\cite{AHP2}]
Let $F_{N,M}: \Hil \longrightarrow \Rcal_N$ be a reconstruction method. The \emph{(absolute) condition number}
$\kappa = \kappa(F_{N,M})>0$ is then given by
\[
    \kappa = \sup \limits_{f \in \Hil} \lim \limits_{\varepsilon \searrow 0 } \sup \limits_{\substack{ g \in \Hil, \\
    0 < \| m(g) \|_{\ell^2} \leq \varepsilon}} \left( \frac{\| F_{N,M}(f+g) - F_{N,M}(f) \|}{\|m(g)\|_{\ell^2}} \right),
\]
where $m(g)= (m(g)_1, \ldots, m(g)_M, 0, \ldots)$. If $\kappa$ is small, then we say $F_{N,M}$ is \emph{well conditioned},
otherwise $F_{N,M}$ is called \emph{ill-conditioned}.
\end{deff}

We now merge both definitions in order to have only one single measure for a reconstruction method.

\begin{deff}[\cite{AHP2,AHP1}] \label{def:reconstructionconstant}
Let $F_{N,M}: \Hil \longrightarrow \Rcal_N$ be a reconstruction method. The \emph{reconstruction constant} of $F_{N,M}$ is defined as
\[
    C(F_{N,M}) = \max \{ \mu(F_{N,M}), \kappa(F_{N,M}) \},
\]
where $\mu(F_{N,M})$ is the quasi-optimality constant and $\kappa(F_{N,M})$ is the (absolute) condition number.
\end{deff}


\subsection{The Infimum Cosine Angle}

Intuitively, the angle between the sampling and reconstruction space should play a role in determining the
reconstruction constant for a given reconstruction method. As we will see, this will be in particular the
case for the reconstruction method of generalized sampling. To prepare those results, in this subsection,
we first introduce a particularly useful notion of angle.
The concept of  principal angles between two Euclidean subspaces is well known in the literature. However, for
arbitrary closed subspaces of a Hilbert space many different notions of an angle exist. We exemplarily mention
the \ti{Friedrichs angle} and the \ti{Dixmier angle} (cf. \cite{CorachMaest,FDeutsch}). For our analysis,
the notion of infimum cosine angle -- utilized, for instance, in \cite{Ald1,UnserAld} -- will be the most
appropriate.  It is defined as follows.

\begin{deff}
Let $U, V$ be closed subspaces of $\cH$. Then the \emph{infimum cosine angle} $\cos (\omega(U,V))$ between $U$ and
$V$ is defined by
\[
   \cos (\omega(U,V)) = \inf \limits_{\substack{u \in U, \\ \|u \| =1}} \| P_V u \|, \quad \omega(U,V) \in [0, \pi/2].
\]
\end{deff}

We remark that the infimum cosine angle is not symmetric in general. For example, if $U$ and $V$ are two non-trivial
closed subspaces of $\Hil$ and $U \neq V$ with $U \subset V$, then $\cos(\omega(U,V)) =1$, whereas $\cos(\omega(V,U))=0$.
The following general characterization of pairs of closed subspaces for which the infimum cosine angle is not symmetric was
proven in \cite{BHKL1}.

\begin{lem}[\cite{BHKL1}]
For two non-trivial closed subspaces $U,V \subset \Hil$, we have $\cos(\omega(U,V)) \neq \cos(\omega(V,U))$ if and
only if one of these quantities is zero and the other is positive.
\end{lem}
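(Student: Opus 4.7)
The statement is an iff; one half, namely that one cosine being zero and the other positive forces inequality, is immediate, as is the observation that if both are zero they trivially coincide. Thus the real work is the remaining case: assuming both $\cos(\omega(U,V)) > 0$ and $\cos(\omega(V,U)) > 0$, show they must be equal.

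My plan is to reinterpret each infimum cosine angle as the lower bound of a bounded operator and then exploit Hilbert-space adjoint theory. Let $A := P_V|_U \colon U \to V$, viewed as a bounded linear map between the Hilbert spaces $U$ and $V$. By definition,
\[
\cos(\omega(U,V)) \;=\; \inf_{u \in U,\,\|u\|=1}\|Au\|,
\]
which is precisely the lower bound of $A$. A short computation, using self-adjointness of $P_V$ and $P_U$, shows that the Hilbert-space adjoint of $A$ is exactly $P_U|_V \colon V \to U$: for $u \in U$ and $v \in V$,
\[
\langle Au,v\rangle \;=\; \langle P_V u, v\rangle \;=\; \langle u, P_V v\rangle \;=\; \langle u, v\rangle \;=\; \langle u, P_U v\rangle.
\]
Hence $\cos(\omega(V,U))$ is the lower bound of $A^{*}$.

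The core of the argument is the classical fact that a bounded operator between Hilbert spaces has positive lower bound if and only if it is injective and has closed range. Consequently the assumption $\cos(\omega(U,V)) > 0$ makes $A$ injective with closed range, which by the closed range theorem is equivalent to $A^{*}$ being surjective. Applying the same reasoning to $A^{*}$, the hypothesis $\cos(\omega(V,U)) > 0$ makes $A^{*}$ injective with closed range, hence $A = (A^{*})^{*}$ surjective. Thus both $A$ and $A^{*}$ are bounded bijections between Hilbert spaces, with bounded inverses satisfying $(A^{*})^{-1} = (A^{-1})^{*}$.

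For any invertible bounded operator $T$ between Hilbert spaces, its lower bound equals $1/\|T^{-1}\|$. Applying this to $A$ and $A^{*}$ and using the standard identity $\|(A^{-1})^{*}\| = \|A^{-1}\|$ yields
\[
\cos(\omega(U,V)) \;=\; \frac{1}{\|A^{-1}\|} \;=\; \frac{1}{\|(A^{-1})^{*}\|} \;=\; \frac{1}{\|(A^{*})^{-1}\|} \;=\; \cos(\omega(V,U)),
\]
which completes the proof. The main subtlety will be the clean passage from \emph{$A$ bounded below} to \emph{$A^{*}$ surjective} via the closed range theorem; once that step is in place, the remainder is a routine computation and no further structural hypothesis on $U$ or $V$ is required.
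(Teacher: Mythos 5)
Your argument is correct. Note that the paper itself offers no proof of this lemma -- it is imported verbatim from \cite{BHKL1} -- so there is nothing internal to compare against; I can only assess the proposal on its own terms. The reduction to the single nontrivial case (both cosines positive implies equality) is the right one, the identification of $P_U|_V$ as the Hilbert-space adjoint of $A = P_V|_U$ is verified correctly (the key point being $P_V v = v$ for $v \in V$ and $P_U u = u$ for $u \in U$), and the chain
\emph{bounded below} $\Leftrightarrow$ \emph{injective with closed range} $\Leftrightarrow$ \emph{adjoint surjective}
is standard via the open mapping and closed range theorems. Once both $A$ and $A^{*}$ are bounded bijections, the identity $\cos(\omega(U,V)) = 1/\|A^{-1}\| = 1/\|(A^{*})^{-1}\| = \cos(\omega(V,U))$ closes the argument, and the non-triviality of $U$ and $V$ is exactly what guarantees the infima are taken over non-empty sets. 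This operator-theoretic route is essentially the one used in the literature the paper cites (and in Tang's characterization quoted as Theorem 2.7 in the paper, where positivity of the cosine is likewise tied to $U \cap V^{\perp} = \{0\}$ and closedness of $U + V^{\perp}$), so the proof is a sound, self-contained replacement for the external citation.
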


A positive infimum cosine angle between the sampling and reconstruction space will be crucial for enabling reconstruction
at all. The
next theorem provides a characterization of subspaces which admit a positive infimum cosine angle.

\begin{theorem}[\cite{Tang1}]\label{Theorem:subspacecondition}
Let $U, V$ be closed subspaces of $\Hil$. Then, we have that $\cos (\omega(U,V)) >0$ if and only if
$U \cap V^\perp = \{0\}$ and $ U + V^\perp$ is closed in $\Hil$.
\end{theorem}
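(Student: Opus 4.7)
The plan is to unpack the definition $\cos(\omega(U,V)) = \inf_{u\in U, \|u\|=1} \|P_V u\|$ into the quantitative statement that the restricted projection $P_V|_U : U \to V$ is bounded below, and then to relate this to the algebraic/topological properties of the sum $U + V^\perp$ via the open mapping theorem. Throughout, I would use the identity $\|u\|^2 = \|P_V u\|^2 + \|P_{V^\perp} u\|^2$, which lets me convert between statements about $P_V u$ being large and $P_{V^\perp}u$ being bounded away from $u$.

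For the direction $(\Rightarrow)$, assume $\cos(\omega(U,V)) = c > 0$. The condition $U \cap V^\perp = \{0\}$ is immediate: any unit vector $u \in U \cap V^\perp$ satisfies $P_V u = 0$, contradicting $\|P_Vu\| \geq c$. For closedness of $U + V^\perp$, I would take a convergent sequence $w_n = u_n + v_n \to w$ with $u_n \in U$, $v_n \in V^\perp$. Applying $P_V$ gives $P_V u_n = P_V w_n \to P_V w$, so $(P_V u_n)$ is Cauchy. The bounded-below property $\|P_V(u_n - u_m)\| \geq c \|u_n - u_m\|$ then forces $(u_n)$ to be Cauchy in $U$, hence convergent to some $u \in U$ (since $U$ is closed), and consequently $v_n = w_n - u_n$ converges to some $v \in V^\perp$, giving $w = u + v \in U + V^\perp$.

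For the direction $(\Leftarrow)$, assume $U\cap V^\perp = \{0\}$ and $U + V^\perp$ is closed. I would consider the bounded linear map
\[
T : U \oplus V^\perp \longrightarrow U + V^\perp, \qquad T(u,v) = u + v,
\]
where $U \oplus V^\perp$ carries the norm $\|(u,v)\| = \|u\| + \|v\|$ and is therefore a Banach space (since both summands are closed in $\Hil$). The hypothesis that $U \cap V^\perp = \{0\}$ makes $T$ injective, and the closedness of $U + V^\perp$ makes the target a Banach space and $T$ surjective onto it. By the open mapping theorem, $T^{-1}$ is bounded, so there exists $C > 0$ such that $\|u\| + \|v\| \leq C\|u + v\|$ for all $u \in U$, $v \in V^\perp$. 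Now for any unit vector $u \in U$, applying this with $v := -P_{V^\perp} u \in V^\perp$ yields $u + v = P_V u$, and hence
\[
1 = \|u\| \leq \|u\| + \|P_{V^\perp} u\| \leq C \|P_V u\|.
\]
Taking the infimum over such $u$ gives $\cos(\omega(U,V)) \geq 1/C > 0$.

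The main conceptual step is the reverse direction, where the open mapping theorem converts the purely topological hypothesis (closedness of $U + V^\perp$) into the quantitative norm estimate needed to control $\|P_V u\|$ from below. The forward direction is essentially bookkeeping once one recognizes that a positive infimum cosine angle is the same as $P_V|_U$ being bounded below.
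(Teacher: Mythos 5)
Your proof is correct. The paper itself gives no proof of this statement---it is quoted directly from the cited reference \cite{Tang1}---so there is no in-paper argument to compare against; your two directions (the bounded-below property of $P_V|_U$ forcing $U\cap V^\perp=\{0\}$ and closedness of $U+V^\perp$ via a Cauchy-sequence argument, and conversely the open mapping theorem applied to $(u,v)\mapsto u+v$ combined with the observation that $u-P_{V^\perp}u=P_Vu$) constitute the standard and complete argument for this equivalence.
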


This characterization gives rise to the following definition.

\begin{deff}[\cite{AHP2,Ald1}]
If $\cos(\omega(U,V^\perp)) >0$ for two closed subspaces $U, V$ of $\cH$, then we say $U$ and $V$ obey the
\emph{subspace condition}. In this case, we define the associated \emph{(oblique) projection}
\[
P_{U,V}: U \oplus V\longrightarrow \Hil
\]
with range of $P$ equal to $U$ and kernel of $P$ equal to $V$.
\end{deff}

We wish to mention that oblique projections are customarily used in sampling theory, such as, for instance,
in \cite{Ald1,Eldar1,Eldar2,UnserAld}.

\subsection{Reconstruction Method of Generalized Sampling}
\label{subsec:generalizedsamplingmethod}

We are now ready to introduce the method of generalized sampling. To this end, we will always assume that the
reconstruction space $\Rcal$ and the sampling space $\Scal^\perp$ fulfill the subspace condition. In other words, we have $\cos (\omega(\Rcal,\Scal)) >0$.
For any $M \in \N$, let $P_{\Scal_M}$ be the orthogonal projection given by
\[
P_{\Scal_M} : \Hil  \longrightarrow \Scal_M, \quad  h \mapsto \sum \limits_{k=1}^M \langle h, s_k \rangle s_k.
\]
This enables us to formally define the reconstruction method of generalized sampling.

\begin{deff}\label{generalized_sampling}
For $f \in \Hil$ and $N , M\in \N$, we define the reconstruction method of \emph{generalized sampling}
$G_{N,M} : \cH \to \Rcal_N$ by
\beq \label{GS:GSequation}
    \langle P_{\Scal_M} G_{N,M}(f), r_j \rangle = \langle P_{\Scal_M} f, r_j \rangle, \quad j = 1, \ldots, N.
\eeq
We also refer to $G_{N,M}(f)$ as the \emph{generalized sampling reconstruction} of $f$.
\end{deff}

We emphasize that this is indeed a reconstruction method in the sense of Definition \ref{def:reconstructionmethod}, since
the right-hand side of \eqref{GS:GSequation} only depends on the given samples $(\ip{f}{s_k})_{k = 1}^M$ and
not on $f$ itself. Moreover, generalized sampling is a linear reconstruction method. Algorithmically, to determine $G_{N,M}(f)$, i.e., solving \eqref{GS:GSequation}, can be
phrased as the numerical linear algebra problem of computing the coefficients $\alpha^{[N]} = (\alpha_1, \ldots, \alpha_N )\in \C^N$
as the least-squares solution of
\[
U^{[M,N]} \alpha^{[N]} = m(f)^{[M]}, \qquad \mbox{where } \;
    U^{[M,N]}= \begin{pmatrix}
        u_{11} & \ldots & u_{1N} \\
        \vdots & \ddots & \vdots   \\
        u_{M1} & \ldots & u_{MN}
    \end{pmatrix}, \; u_{ij} = \langle r_j, s_i \rangle.
\]
Note that, if $U^{[M,N]}$ is well-conditioned this requires $\mathcal{O}(M N)$ operations in general.  However, in the case of Fourier samples and wavelet reconstruction one can make use of the Fast Fourier Transform and the discrete wavelet transform  and thus reduce this figure down to only $\mathcal{O}(M \log M)$ operations.

The philosophy of generalized sampling is to allow the number of samples $M$ to grow
independently of the fixed number of reconstruction elements $N$. This flexibility of $M$ and $N$ is crucial for stable reconstruction.
The next theorem guarantees the existence of the reconstruction for any $f \in \cH$ provided
that the number of samples $M$ is large enough.

\begin{theorem}[\cite{AHP2}]\label{GS:existence}
Let $N \in \N$. Then, there exists an $M_0 \in \N$, such that, for every $f \in \cH$, (\ref{GS:GSequation}) has
a unique solution $G_{N,M}(f)$ for all $M \geq M_0$. In particular, we then have
\[
G_{N,M} = P_{\Rcal_N ,(P_{\Scal_M}(\Rcal_N))^\perp}.
\]

Moreover, the smallest $M_0$ is the least number such that $\cos( \omega(\Rcal_N, \Scal_{M_0} ))>0.$
\end{theorem}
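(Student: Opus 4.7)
My plan is to rewrite (\ref{GS:GSequation}) as a finite normal-equation system for the coefficients of $G_{N,M}(f)$, reduce its unique solvability to the positivity of the infimum cosine angle $\cos(\omega(\Rcal_N,\Scal_M))$, verify that this condition is inevitably reached as $M$ grows by a Gram-matrix continuity argument, and then identify the resulting map with the claimed oblique projection.

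Since $\Rcal_N$ is finite-dimensional, (\ref{GS:GSequation}) is equivalent to the single requirement $\langle f - G_{N,M}(f), P_{\Scal_M} r\rangle = 0$ for every $r \in \Rcal_N$, i.e.\ $f - G_{N,M}(f) \in (P_{\Scal_M}(\Rcal_N))^\perp$. Expanding $G_{N,M}(f) = \sum_{i=1}^N \alpha_i u_i$ in an orthonormal basis $\{u_i\}$ of $\Rcal_N$ and using self-adjointness and idempotence of $P_{\Scal_M}$, this becomes the linear system $G_M\alpha = c$, where $G_M = (\langle P_{\Scal_M} u_i, P_{\Scal_M} u_j\rangle)_{i,j}$ is the Gram matrix of $\{P_{\Scal_M} u_i\}_{i=1}^N$. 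Unique solvability for every $f$ is thus equivalent to $G_M$ being invertible, hence to linear independence of $\{P_{\Scal_M} u_i\}$, hence to injectivity of $P_{\Scal_M}|_{\Rcal_N}$. Since $\Rcal_N$ is finite-dimensional, injectivity is the same as being bounded below on the unit sphere, which is precisely $\cos(\omega(\Rcal_N,\Scal_M))>0$; note the exact identity $\lambda_{\min}(G_M) = \cos(\omega(\Rcal_N,\Scal_M))^2$.

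I now produce $M_0$. The standing subspace condition gives $\cos(\omega(\Rcal,\Scal))>0$, and $\Rcal_N \subset \Rcal$ yields $\cos(\omega(\Rcal_N,\Scal))>0$. The entries of $G_M$ are the partial sums $\sum_{k=1}^M \langle u_i,s_k\rangle\overline{\langle u_j,s_k\rangle}$, which converge to the corresponding entries of the positive-definite limit Gram matrix $G_\infty = (\langle P_{\Scal} u_i, P_{\Scal} u_j\rangle)_{i,j}$. By continuity of eigenvalues, $\lambda_{\min}(G_M) \to \lambda_{\min}(G_\infty) > 0$, so $\cos(\omega(\Rcal_N,\Scal_M))>0$ for all sufficiently large $M$. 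Moreover $M \mapsto \|P_{\Scal_M} u\|^2 = \sum_{k=1}^M |\langle u,s_k\rangle|^2$ is non-decreasing in $M$, hence so is $M \mapsto \cos(\omega(\Rcal_N,\Scal_M))$; once positive at some value of $M$ it remains positive thereafter, which pins down the least such $M_0$.

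Finally, for $u \in \Rcal_N$ the nearest element of $\Scal_M$ to $u$ is already $P_{\Scal_M} u \in P_{\Scal_M}(\Rcal_N)$, so $\cos(\omega(\Rcal_N, P_{\Scal_M}(\Rcal_N))) = \cos(\omega(\Rcal_N,\Scal_M)) > 0$. Theorem \ref{Theorem:subspacecondition} then gives $\Rcal_N \cap (P_{\Scal_M}(\Rcal_N))^\perp = \{0\}$, and combined with the dimension count $\dim P_{\Scal_M}(\Rcal_N) = N$ (from injectivity), this yields $\Hil = \Rcal_N \oplus (P_{\Scal_M}(\Rcal_N))^\perp$. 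Since $G_{N,M}(f) \in \Rcal_N$ and $f - G_{N,M}(f) \in (P_{\Scal_M}(\Rcal_N))^\perp$, we conclude $G_{N,M} = P_{\Rcal_N,(P_{\Scal_M}(\Rcal_N))^\perp}$. The main obstacle I anticipate is purely bookkeeping --- keeping tight the chain of equivalences between invertibility of $G_M$, injectivity of $P_{\Scal_M}|_{\Rcal_N}$, positivity of $\cos(\omega(\Rcal_N,\Scal_M))$, and applicability of Theorem \ref{Theorem:subspacecondition} to the pair $(\Rcal_N, P_{\Scal_M}(\Rcal_N))$ --- rather than any substantive analytic difficulty.
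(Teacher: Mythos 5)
The paper does not prove this statement: Theorem \ref{GS:existence} is quoted from \cite{AHP2} and used as a black box, so there is no internal proof to compare against. Judged on its own, your argument is correct and complete. The reduction of \eqref{GS:GSequation} to the Gram system $G_M\alpha=c$ via self-adjointness and idempotence of $P_{\Scal_M}$ is sound; the identity $\lambda_{\min}(G_M)=\cos(\omega(\Rcal_N,\Scal_M))^2$ correctly ties invertibility to the angle; the existence of $M_0$ follows as you say from $\cos(\omega(\Rcal_N,\Scal))\geq\cos(\omega(\Rcal,\Scal))>0$ together with entrywise convergence $G_M\to G_\infty$ and continuity of eigenvalues of Hermitian matrices; and the monotonicity of $M\mapsto\|P_{\Scal_M}u\|$ gives both the ``for all $M\geq M_0$'' clause and the characterization of the least $M_0$. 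The identification with the oblique projection is also handled correctly: your observation that $P_{P_{\Scal_M}(\Rcal_N)}u=P_{\Scal_M}u$ for $u\in\Rcal_N$ (because $u-P_{\Scal_M}u\perp\Scal_M\supseteq P_{\Scal_M}(\Rcal_N)$) justifies transferring the angle condition to the pair $(\Rcal_N,P_{\Scal_M}(\Rcal_N))$, and the dimension count then yields $\Hil=\Rcal_N\oplus(P_{\Scal_M}(\Rcal_N))^\perp$, so that $f=G_{N,M}(f)+(f-G_{N,M}(f))$ exhibits $G_{N,M}$ as $P_{\Rcal_N,(P_{\Scal_M}(\Rcal_N))^\perp}$. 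This matches the standard route taken in the cited literature; no gaps.
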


It is a priori not clear how to find $M \in \N$ large enough such that $\cos(\omega(\Rcal_N, \Scal_M)) >0$, or
even determine the smallest such value $M_0 \in \N$. In the next subsection, it will turn out that this is
intimately related to the reconstruction constant defined in Definition \ref{def:reconstructionconstant},
and will lead to the notion of a stable sampling rate.


\subsection{Stable Sampling Rate}

In Subsection \ref{subsec:QualityMeasures}, we introduced the reconstruction constant as the main quality
measure for stable and accurate reconstructions. Intriguingly, we can now relate this notion to the
infimum cosine angle in the case of generalized sampling as reconstruction method.

\begin{theorem}[\cite{AHP2}] \label{theo:generalized_sampling_Cmk}
Retaining the definitions and notations from Subsection \ref{subsec:generalizedsamplingmethod}, for all $f \in \cH$, we have
\begin{align*}
    \| G_{N,M}(f) \| \leq \frac{1}{\cos( \omega(\Rcal_N,  \Scal_M))} \| f \|,
\end{align*}
and
\begin{align*}
    \| f - P_{\Rcal_N} f \| \leq \| f- G_{N,M}(f) \| \leq  \frac{1}{\cos( \omega(\Rcal_N,  \Scal_M))}  \| f - P_{\Rcal_N} f \|.
\end{align*}
In particular, these bounds are sharp. Moreover, the reconstruction constant of generalized sampling $C(G_{N,M})$ obeys
\begin{align*}
    C(G_{N,M}) = \mu(G_{N,M}) = \kappa(G_{N,M}) = \frac{1}{\cos( \omega(\Rcal_N,\Scal_M))}.
\end{align*}
\end{theorem}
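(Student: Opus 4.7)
The plan is to exploit the representation of the generalized sampling operator as an oblique projection, namely $G_{N,M} = P_{\Rcal_N,(P_{\Scal_M}(\Rcal_N))^\perp}$, supplied by Theorem \ref{GS:existence}, and then to compute its operator norm in terms of the infimum cosine angle $\cos(\omega(\Rcal_N,\Scal_M))$. Once the operator-norm identity $\|G_{N,M}\| = 1/\cos(\omega(\Rcal_N,\Scal_M))$ is in hand, the two displayed inequalities, as well as the identification of $\mu$ and $\kappa$, will follow from elementary projection-theoretic facts.

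First I would establish the operator-norm identity. Set $W := P_{\Scal_M}(\Rcal_N) \subseteq \Scal_M$. By a standard formula for the norm of an oblique projection onto a subspace $U$ along $W^\perp$ (proved by decomposing an arbitrary $h \in U \oplus W^\perp$ and applying the bound $\|P_W u\| \geq \cos(\omega(U,W))\|u\|$ for $u \in U$), one has $\|P_{U,W^\perp}\| = 1/\cos(\omega(U,W))$. Applied to $U = \Rcal_N$ this gives $\|G_{N,M}\| = 1/\cos(\omega(\Rcal_N, W))$. The crucial reduction is then $\cos(\omega(\Rcal_N, W)) = \cos(\omega(\Rcal_N, \Scal_M))$: indeed, for $u \in \Rcal_N$ the vector $P_{\Scal_M} u$ lies in $W$, while $u - P_{\Scal_M} u \perp \Scal_M \supseteq W$, so the orthogonal projection of $u$ onto $W$ coincides with $P_{\Scal_M} u$, and the infima in the two angle definitions agree. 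The bound $\|G_{N,M}(f)\| \leq \|f\|/\cos(\omega(\Rcal_N,\Scal_M))$ is now immediate.

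For the quasi-optimality bound I would invoke Kato's identity $\|I - P\| = \|P\|$ for any non-trivial bounded idempotent $P$ on a Hilbert space, applied to $P = G_{N,M}$. Since $G_{N,M}$ restricts to the identity on $\Rcal_N$, we have $(I - G_{N,M})(P_{\Rcal_N} f) = 0$, hence $(I - G_{N,M}) f = (I - G_{N,M})(f - P_{\Rcal_N} f)$. Taking norms yields the upper bound, while the lower bound $\|f - P_{\Rcal_N} f\| \leq \|f - G_{N,M}(f)\|$ is automatic since $G_{N,M}(f) \in \Rcal_N$ and $P_{\Rcal_N} f$ is the nearest point of $\Rcal_N$ to $f$. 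Sharpness of both bounds is obtained by restricting attention to $f \in \Rcal_N^\perp$, where $\|f - P_{\Rcal_N} f\| = \|f\|$, and then choosing $f$ approaching the operator norm of $I - G_{N,M}$.

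For the condition number, linearity gives $G_{N,M}(f+g) - G_{N,M}(f) = G_{N,M}(g)$, and Parseval yields $\|m(g)\|_{\ell^2} = \|P_{\Scal_M} g\|$. Because $G_{N,M}(g) = G_{N,M}(P_{\Scal_M} g)$ directly from the defining equation \eqref{GS:GSequation}, the supremum in the definition of $\kappa$ reduces to $\sup_{h \in \Scal_M \setminus \{0\}} \|G_{N,M}(h)\|/\|h\|$, which equals $\|G_{N,M}\| = 1/\cos(\omega(\Rcal_N,\Scal_M))$ since $G_{N,M}$ annihilates $\Scal_M^\perp$. Combining these three computations with the definition $C(G_{N,M}) = \max\{\mu,\kappa\}$ yields the full chain of equalities. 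The one step I expect to require a little care is the reduction $\cos(\omega(\Rcal_N,W)) = \cos(\omega(\Rcal_N,\Scal_M))$, but this is ultimately a one-line observation about orthogonal projections onto nested subspaces, and the rest of the argument is bookkeeping around Kato's identity.
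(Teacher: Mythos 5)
This theorem is quoted from \cite{AHP2} and the paper itself contains no proof of it, so there is no internal proof to compare against; judged on its own, your argument is correct and is essentially the standard one: represent $G_{N,M}$ as the oblique projection $P_{\Rcal_N,(P_{\Scal_M}(\Rcal_N))^\perp}$ (Theorem \ref{GS:existence}), compute its norm as $1/\cos(\omega(\Rcal_N,\Scal_M))$ via the correct observation that $P_W u = P_{\Scal_M}u$ for $u \in \Rcal_N$ and $W = P_{\Scal_M}(\Rcal_N)$, and then obtain $\mu$ and $\kappa$ from the idempotent-norm identity $\|I-P\|=\|P\|$ together with $G_{N,M} = G_{N,M}\circ P_{\Scal_M}$ and $\|m(g)\|_{\ell^2}=\|P_{\Scal_M}g\|$. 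All the key reductions check out, including the passage from the supremum over all of $\Hil$ to the supremum over $\Scal_M$ in the condition-number computation.
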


Hence, in order to obtain stable and accurate reconstructions, it is both necessary and sufficient to control the angle between $\Rcal_N$
and $\Scal_M$. This leads us to the definition of the stable sampling rate.

\begin{deff}\label{def:SSR}
For $N \in \N$ and $\theta>1$, the \emph{stable sampling rate} is defined as
\[
    \Theta(N,\theta) = \min \left\{ M \in \N \, : \, C(G_{N,M}) < \theta \right \}.
\]
\end{deff}
Note that the stable sampling rate is of interest, since it determines the number of samples required for guaranteed, quasi-optimal and numerically stable reconstructions.

We next aim to compare generalized sampling to other reconstruction methods. For this, we first introduce a class
of reconstruction methods, which recover signals from the reconstruction space exactly.

\begin{deff}
A reconstruction method $F_{N,M}: \Hil \longrightarrow \Rcal_N$ is called \emph{perfect}, if $F_{N,M} (f) = f$ whenever $f \in \Rcal_N$.
\end{deff}

Note that any reconstruction method with finite quasi-optimality constant is automatically perfect.
The next result proves that generalized sampling is in the sense superior to any other perfect reconstruction method
that its condition number as well as even its reconstruction constant is smaller.

\begin{theorem}[\cite{AHP2}]\label{theorem:optimality}
Let $M \geq N$, and let $F_{N,M}: \Hil \longrightarrow \Rcal_N$ be a linear or non linear perfect reconstruction method. Then
\[
    \kappa(G_{N,M}) \leq \kappa(F_{N,M}),
\]
where $G_{N,M}$ is the reconstruction method of generalized sampling. In particular, we have
\[
C(G_{N,M}) \leq C(F_{N,M}).
\]
\end{theorem}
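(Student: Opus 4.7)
The plan is to reduce the statement to an explicit lower bound on $\kappa(F_{N,M})$ in terms of the infimum cosine angle. Theorem \ref{theo:generalized_sampling_Cmk} already tells us that
\[
C(G_{N,M}) \;=\; \kappa(G_{N,M}) \;=\; \frac{1}{\cos(\omega(\Rcal_N, \Scal_M))},
\]
so once I prove $\kappa(F_{N,M}) \geq 1/\cos(\omega(\Rcal_N, \Scal_M))$ for every perfect $F_{N,M}$, the first displayed inequality is immediate, and the second follows from $C(F_{N,M}) \geq \kappa(F_{N,M})$ in Definition~\ref{def:reconstructionconstant}. The degenerate case $\cos(\omega(\Rcal_N, \Scal_M)) = 0$ I would handle by running the argument below along a sequence $f_n \in \Rcal_N$ of unit vectors with $\|P_{\Scal_M} f_n\| \to 0$; the same estimate then forces $\kappa(F_{N,M}) = \infty$, so the inequality holds trivially. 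The hypothesis $M \geq N$ is used only to ensure that this degenerate situation is the exceptional one.

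The heart of the argument is a near-extremal perturbation that stays inside $\Rcal_N$. Given $\delta > 0$, pick by the definition of the infimum cosine angle some $f_\delta \in \Rcal_N$ with $\|f_\delta\| = 1$ and
\[
\|P_{\Scal_M} f_\delta\| \;\leq\; \cos(\omega(\Rcal_N, \Scal_M)) + \delta.
\]
For $t \in (0,1)$ set $g_t = -t f_\delta$, so $\|m(g_t)\|_{\ell^2} = t\|P_{\Scal_M} f_\delta\|$, which tends to $0$ as $t \to 0$ and is therefore admissible in the supremum defining $\kappa(F_{N,M})$ at the base point $f = f_\delta$. The decisive observation is that both $f_\delta$ and $f_\delta + g_t = (1-t) f_\delta$ lie in the subspace $\Rcal_N$. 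Since $F_{N,M}$ is perfect, this forces $F_{N,M}(f_\delta) = f_\delta$ and $F_{N,M}((1-t) f_\delta) = (1-t) f_\delta$ \emph{without} appealing to any linearity of $F_{N,M}$. Consequently,
\[
\frac{\|F_{N,M}(f_\delta + g_t) - F_{N,M}(f_\delta)\|}{\|m(g_t)\|_{\ell^2}} \;=\; \frac{t\|f_\delta\|}{t \|P_{\Scal_M} f_\delta\|} \;=\; \frac{1}{\|P_{\Scal_M} f_\delta\|}.
\]

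Taking the inner supremum and $\varepsilon$-limit at $f = f_\delta$, and then the outer supremum over $f$ in the definition of $\kappa$, yields $\kappa(F_{N,M}) \geq 1/(\cos(\omega(\Rcal_N, \Scal_M)) + \delta)$; letting $\delta \to 0$ completes the proof of $\kappa(G_{N,M}) \leq \kappa(F_{N,M})$. The main obstacle one might worry about is the $\lim_{\varepsilon \searrow 0}$ appearing in the definition of the condition number: it requires the difference quotient to be large for \emph{arbitrarily small} perturbations in the sample norm, which for a non-linear $F_{N,M}$ is delicate because we have no uniform continuity assumption. This is exactly why the scale-invariant perturbation $g_t = -t f_\delta$ is natural: as $t \to 0$ we drive $\|m(g_t)\|_{\ell^2} \to 0$ while the quotient $1/\|P_{\Scal_M} f_\delta\|$ remains fixed, so the nonlinearity of $F_{N,M}$ is harmless because perfectness pins its values along the entire ray $\{\lambda f_\delta : \lambda \in \R\} \subset \Rcal_N$.
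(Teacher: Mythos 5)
This theorem is imported from \cite{AHP2} and the paper contains no proof of it, so there is nothing internal to compare against; judged on its own, your argument is correct and is essentially the standard one from the cited reference: perturb along a worst-case unit vector $f_\delta \in \Rcal_N$ realizing the infimum cosine angle up to $\delta$, use perfectness to pin $F_{N,M}$ on the ray $\{\lambda f_\delta\}\subset\Rcal_N$, and note that $\|m(g)\|_{\ell^2}=\|P_{\Scal_M}g\|$ so the difference quotient equals $1/\|P_{\Scal_M}f_\delta\|$ independently of $t$, surviving the $\varepsilon\searrow 0$ limit. The conclusion then follows from Theorem \ref{theo:generalized_sampling_Cmk} exactly as you say, and your handling of the degenerate case (where in fact no perfect method can even exist, since two elements of $\Rcal_N$ would share the same samples) is adequate.
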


Next, we aim to compare the quasi-optimality constant of generalized sampling to other reconstruction methods. For this,
assume that the stable sampling rate of generalized sampling is linear in $N$, i.e., $\Theta(N, \theta) = \mathcal{O}(N)$ as
$N \rightarrow \infty$, and assume that there exist constants $C_f,D_f, \gamma_f >0$ depending on the initial signal $f \in \cH$
such that
\beq \label{eq:decayoptthm}
    C_f N^{-\gamma_f} \leq \| f - P_{\Rcal_N}f \| \leq D_f N^{-\gamma_f}, \quad \mbox{for all } N \in \N.
\eeq
The following result then shows that the error of any reconstruction method $F_M$ can be only up to a constant
better than the error of generalized sampling reconstruction.

\begin{theorem}[\cite{AHP2}] \label{theo:general_linear_rate}
Suppose that the stable sampling rate $\Theta(N, \theta)$ is linear in $N$, i.e. $\Theta(N, \theta) = \mathcal{O}(N)$ as
$N \rightarrow \infty$. Let $f \in \Hil$ be fixed and let
\begin{align*}
    F_M: (m(f)_1, \ldots, m(f)_M) \mapsto F_M(f) \in \Rcal_{\psi_f(M)},
\end{align*}
be a reconstruction method, where $\psi_f: \N \longrightarrow \N$ with $\psi_f(M) \leq \lambda M$ for some $\lambda >0$.
Assume that \eqref{eq:decayoptthm} holds. Then, for any $\theta >1$, there exist constants $d(\theta) \in (0,1)$ and
$c(\theta, C_f, D_f)>0$ such that
\[
    \| f - G_{d(\theta)M,M}(f) \| \leq c(\theta, C_f, D_f) \| f - F_M(f) ||, \quad \mbox{for all } M \in \N,
\]
where $G_{N,M}$ denotes the generalized sampling reconstruction method.
\end{theorem}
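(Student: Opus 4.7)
The plan is to combine the upper error bound for generalized sampling (Theorem \ref{theo:generalized_sampling_Cmk}) with the trivial lower bound for any reconstruction method $F_M$ whose output lies in a reconstruction space of dimension $\psi_f(M) \leq \lambda M$, and then to exploit the two-sided polynomial decay \eqref{eq:decayoptthm} to turn the ratio into a constant.

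First I would use the linearity of the stable sampling rate: since $\Theta(N,\theta) = \mathcal{O}(N)$, there is a constant $C_\theta > 0$ with $\Theta(N,\theta) \leq C_\theta N$ for all sufficiently large $N$. Choose $d(\theta) \in (0,1)$ such that $d(\theta) \leq 1/C_\theta$, and set $N = N(M) := \lfloor d(\theta) M \rfloor$. Then by construction $\Theta(N,\theta) \leq M$, which by Definition \ref{def:SSR} means $C(G_{N,M}) < \theta$. Applying Theorem \ref{theo:generalized_sampling_Cmk} and the upper bound in \eqref{eq:decayoptthm} yields
\begin{equation*}
\| f - G_{N,M}(f) \| \;\leq\; \theta \, \| f - P_{\Rcal_N} f \| \;\leq\; \theta D_f N^{-\gamma_f} \;\leq\; \theta D_f \bigl(d(\theta) M / 2\bigr)^{-\gamma_f},
\end{equation*}
where the factor $1/2$ absorbs the floor for all $M$ large enough.

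Second I would produce a matching lower bound for $F_M$. Since $F_M(f) \in \Rcal_{\psi_f(M)}$ and $P_{\Rcal_{\psi_f(M)}}$ is the orthogonal projector onto this space, it is the best approximant, so
\begin{equation*}
\| f - F_M(f) \| \;\geq\; \| f - P_{\Rcal_{\psi_f(M)}} f \| \;\geq\; C_f \, \psi_f(M)^{-\gamma_f} \;\geq\; C_f (\lambda M)^{-\gamma_f},
\end{equation*}
using the lower bound in \eqref{eq:decayoptthm} and $\psi_f(M) \leq \lambda M$. Dividing the two estimates, the powers of $M$ cancel, and we obtain the bound
\begin{equation*}
\| f - G_{N,M}(f) \| \;\leq\; \frac{\theta D_f \, (2\lambda/d(\theta))^{\gamma_f}}{C_f} \, \| f - F_M(f) \|,
\end{equation*}
giving the required constant $c(\theta,C_f,D_f)$ (which of course also depends on the fixed quantities $\lambda$ and $\gamma_f$).

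The main obstacle is the small-$M$ regime: the bound $\Theta(N,\theta) \leq C_\theta N$ only holds asymptotically, and one must ensure that $N(M) = \lfloor d(\theta) M \rfloor$ is at least $1$ and large enough that the stable sampling rate estimate kicks in. This is handled by either (a) shrinking $d(\theta)$ and enlarging $c(\theta, C_f, D_f)$ to dominate the finitely many remaining values of $M$, or (b) trivially noting that for small $M$ both sides are bounded by $\|f\|$ up to a constant, so the inequality can be forced to hold by again enlarging $c$. Everything else is bookkeeping around the constants, since both the upper bound from generalized sampling and the lower bound from orthogonal projection produce the same power $M^{-\gamma_f}$, which is precisely what makes the linearity hypothesis on $\Theta$ the essential ingredient.
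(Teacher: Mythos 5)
The paper does not prove this theorem at all --- it is quoted from \cite{AHP2} with no argument supplied --- so there is nothing internal to compare against; judged on its own, your proof is correct and is the standard (and presumably the cited) argument: quasi-optimality of $G_{N,M}$ under the linear stable sampling rate gives the upper bound $\theta D_f N^{-\gamma_f}$, while the fact that $F_M(f)$ lands in $\Rcal_{\psi_f(M)}$ forces the lower bound $C_f(\lambda M)^{-\gamma_f}$ via the orthogonal projection, and the matching powers of $M$ cancel. The only step you use silently is that $M \geq \Theta(N,\theta)$ implies $C(G_{N,M}) < \theta$, which needs the monotonicity of $M \mapsto \cos(\omega(\Rcal_N,\Scal_M))$ coming from the nesting $\Scal_M \subset \Scal_{M'}$; that is true here and worth one line, and your handling of the small-$M$ regime (the right-hand side is bounded away from zero by $C_f(\lambda M)^{-\gamma_f}$, so finitely many exceptional $M$ are absorbed into the constant) is sound.
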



\section{Linear Sampling Rate for Compactly Supported 2D Wavelets}

As already elaborated upon in the introduction, the situation of taking the Fourier basis as sampling system is of
particular importance to applications. A very common choice for a reconstruction system in imaging sciences are
2D wavelets, predominantly of compact support due to their high spatial localization.

In this section, we will state our main result for the situation of Fourier samples and reconstruction within a
wavelet basis using 2D compactly supported wavelets. More precisely, we will show linearity of the stable sampling
rate for the associated generalized sampling scheme, which shows by Theorem \ref{theo:general_linear_rate} the
near-optimality of this reconstruction method.
For this, we start by defining first the reconstruction and second the sampling space, followed by the statement of our main
result. Due to its technical nature, we present its proof in a separate section, namely Section \ref{sec:proofWavelets}.


\subsection{Sampling and Reconstruction Spaces}


\subsubsection{Compactly Supported 2D Wavelets}
We start by recalling the notion of scaling matrices.
\begin{deff}
Let $A$ be a $2 \times 2$ matrix with non-negative integer entries and eigenvalues greater than one in modulus. Then we
call $A$ a \emph{scaling matrix}.
\end{deff}
For the sake of brevity, in the sequel, we will use the notation
\[
    A = \begin{pmatrix} \lambda_1 & \lambda_2 \\ \lambda_3 & \lambda_4 \end{pmatrix}.
\]
Moreover, for the entries of $A^j$ we write
\[
    A^j = \begin{pmatrix} \lambda_1^{(j)} & \lambda_2^{(j)} \\ \lambda_3^{(j)} & \lambda_4^{(j)} \end{pmatrix}, \quad j \in \N.
\]
Notice that $\lambda_i^{(j)} \neq (\lambda_i)^j$ in general. For the sake of completeness, we next give the definition of a 2D multiresolution analysis (MRA). For more details, we refer to the existing literature, e.g \cite{Dau,Maass}.

\begin{deff}
Let $A$ be a scaling matrix. Then a sequence of closed subspaces $(V_j)_{j \in \Z}$ of $L^2(\R^2)$ is called a
\emph{multiresolution analysis}, if the following properties are satisfied.
\begin{compactenum}[i)]
\item $ \{0\} \subset \ldots \subset V_j \subset V_{j+1} \subset \ldots \subset L^2(\R^2)$,
\item $\bigcap \limits_{j \in \Z} V_j = \{0 \}$,
\item $\overline{\bigcup \limits_{j \in \Z} V_j} = L^2(\R^2)$,
\item $f \in V_j \Leftrightarrow f(A \cdot) \in V_{j+1}$,
\item there exists a function $\phi \in L^2(\R^2)$ (called \emph{scaling function}), such that
\[
\{  \phi_{0,m} := \phi ( \cdot - m ) \, : \, m \in \Z^2 \}
\]
constitutes an orthonormal basis for $V_0$.
\end{compactenum}
\end{deff}

The associated {\it wavelet spaces} $(W_j)_{j \in \Z}$ are then defined by
\[
 V_{j+1} = V_j \oplus W_j.
\]
It is well known that there exist $| \det A | -1$ corresponding compactly supported
wavelets $\psi^1, \ldots, \psi^{|\det A| -1}$ such that
\[
\{\psi_{j,m}^p := | \det A |^{j/2} \psi^p(A^j \cdot -m) :  m= (m_1,m_2) \in \Z^2, p=1, \ldots, |\det A| -1\}
\]
forms an orthonormal basis for $W_j$ for each $j$, see, e.g., \cite{Meyerondelettes}. We now consider the decomposition
\[
L^2(\R^2) = V_0 \oplus \bigoplus_{j \in \N} W_j,
\]
where
\[
 V_0 :=\overline{\spann}\{ \phi_{0,m} \, : \, m= (m_1,m_2) \in \Z^2\}
\]
and
\[
W_j := \overline{\spann}\{ \psi^p_{j,m} \, : \, m= (m_1,m_2) \in \Z^2, p = 1,  \ldots, | \det A | -1 \}, \quad j \in \N.
\]


\subsubsection{Reconstruction Space}
\label{subsec:reconstructionspace}

Our aim is to reconstruct functions that are supported on $[0,a]^2$. To this end, suppose that the scaling function and wavelet functions are supported in $[0,a]^2$.
To mimic the fact that practical applications can only handle finite systems, we
restrict to those functions whose support intersects $[0,a]^2$, i.e., to the systems
\[
    \Omega_1=\{ \phi_{0,m} \, : \, m= (m_1,m_2) \in \Z^2, -a< m_1, m_2 < a \}
\]
and
\begin{align*}
    \Omega_2 = \{ \psi^p_{j,m} \, : \, j \in \N \cup \{ 0 \}, \ &m= (m_1,m_2) \in \Z^2, -a < m_1 < a (\lambda_1^{(j)}
    + \lambda_2^{(j)}), \\  & -a< m_2 < a (\lambda_3^{(j)} + \lambda_4^{(j)}),   p = 1,  \ldots, | \det A | -1  \}.
\end{align*}
The reconstruction space $\Rcal$ is then defined as the closed linear span of these functions, which is
\[
    \Rcal = \overline{\spann} \{ \varphi \, : \, \varphi \in \Omega_1 \cup \Omega_2\}.
\]
To define the finite-dimensional subspaces $\Rcal_N$, we require an ordering for this system. The most natural way to
order $\Omega_1 \cup \Omega_2$ is starting from wavelets at coarsest scale and then continue to higher scales. Within
one scale, one might order the translation $(m_1,m_2)$ in a lexicographical manner starting from the smallest number
up the largest. More precisely, we fix $m_1$ and let $m_2$ run, increase $m_1$ by one and repeat. This then leads to
the following ordering of $\Omega_1 \cup \Omega_2$:
\begin{eqnarray}\nonumber
\lefteqn{\{\varphi_i\}_{i \in \N}}\\
& = & \{ \phi_{0, (-a+1,-a+1)}, \ldots \phi_{0, (-a+1,a-1)},  \phi_{0, (-a + 2,-a+1)} \ldots, \phi_{0, (-a+2,a-1)}, \ldots,
\phi_{0, (a-1,a-1)}, \nonumber \\
&& \psi^1_{0, (-a +1 , -a +1)}, \ldots, \psi^1_{0, (-a +1 ,  m_2^{(0)} -1)},  \ldots , \psi^1_{0, (m_1^{(0)} -1 , -a +1)},
\ldots, \psi^1_{0, (m_1^{(0)} -1 ,  m_2^{(0)} -1)}, \ldots, \nonumber \\
&& \psi^{| \det A | -1}_{0, (-a +1 , - a +1)}, \ldots, \psi^{| \det A | -1}_{0, (-a +1 ,  m_2^{(0)} -1)},   \ldots ,
\psi^{| \det A | -1}_{0, (m_1^{(0)} -1 , - m_2^{(0)} +1)}, \ldots, \psi^{| \det A | -1}_{0, (m_1^{(0)} -1 ,  m_2^{(0)} -1)},  \nonumber \\
&& \psi^1_{1, (-a +1 , - m_2^{(1)} +1)}, \ldots, \psi^1_{1, (-a +1 ,  m_2^{(1)} -1)}, \psi^1_{1, (-a +2 , - m_2^{(1)} +1)},
\ldots, \psi^1_{1, (-a +2 ,  m_2^{(1)} -1)}, \ldots \} \label{eq:ordering}
\end{eqnarray}
where $m_1^{(j)} =  a (\lambda_1^{(j)} + \lambda_2^{(j)})$ and $m_2^{(j)} =  a (\lambda_3^{(j)} + \lambda_4^{(j)}) $.
We emphasize that the presented results do not depend on the specific ordering within the scale. Finally, based on
the chosen ordering, we define the reconstruction space $\Rcal_N$ by
\[
    \Rcal_N = \spann \{ \varphi_i \, : \, i = 1, \ldots, N\}, \quad N \in \N.
\]

In practise, it is much more common to use as approximation spaces those being generated up to a specific scale, say, $J$.
To mimic this approach, we coarsen the choices of $N \in \N$ for which we consider $\Rcal_N$ suitably in the following way.
First, we observe that the number of functions being in $\Rcal$ up to a fixed scale $J-1 \in \N$ is
\beq
\label{eq:N_J}
N_J = (2a - 1 )^2 + (|\det A|-1) \sum _{j=0}^{J-1} ( a(\lambda_1^{(j)} + \lambda_2^{(j)} +1) -1)( a(\lambda_3^{(j)} + \lambda_4^{(j)} +1) -1).
\eeq

\begin{lem}
Retaining the definitions and notations above, then $N_J \le C_a (\lambda_1^{(J)} + \lambda_2^{(J)})(\lambda_3^{(J)} + \lambda_4^{(J)})$ for some constant $C_a$ depending on $a$.
\end{lem}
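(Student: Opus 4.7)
The plan is to reduce the bound on $N_J$ to a geometric-growth estimate for the products $u_j v_j$, where I set $u_j := \lambda_1^{(j)} + \lambda_2^{(j)}$ and $v_j := \lambda_3^{(j)} + \lambda_4^{(j)}$. Writing $(u_j, v_j)^\top = A^j (1,1)^\top$, the identity $A^{j+1} = A \cdot A^j$ yields the recurrence $u_{j+1} = \lambda_1 u_j + \lambda_2 v_j$ and $v_{j+1} = \lambda_3 u_j + \lambda_4 v_j$ with $u_0 = v_0 = 1$. Since each row of $A$ must contain a positive entry (otherwise $\det A = 0$, contradicting that the eigenvalues exceed $1$ in modulus), an easy induction gives $u_j, v_j \ge 1$ for all $j \ge 0$.

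The heart of the argument is the per-step estimate
\[
u_{j+1} v_{j+1} = \lambda_1 \lambda_3\, u_j^2 + (\lambda_1 \lambda_4 + \lambda_2 \lambda_3)\, u_j v_j + \lambda_2 \lambda_4\, v_j^2 \;\ge\; |\det A|\, u_j v_j,
\]
which I would obtain by discarding the $u_j^2$ and $v_j^2$ contributions (all three summands are non-negative because $A$ has non-negative entries) and invoking the trivial inequality $\lambda_1 \lambda_4 + \lambda_2 \lambda_3 \ge |\lambda_1 \lambda_4 - \lambda_2 \lambda_3| = |\det A|$. Iterating produces $u_j v_j \le |\det A|^{-(J-j)}\, u_J v_J$, and since $|\det A|$ is an integer with $|\det A| \ge 2$, summing the resulting geometric series gives
\[
\sum_{j=0}^{J-1} u_j v_j \;\le\; \frac{u_J v_J}{|\det A| - 1}.
\]

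Next I would dispose of the $+1$ and $-1$ shifts in the definition of $N_J$. Using $u_j, v_j \ge 1$, the crude termwise bound $(a(u_j+1)-1)(a(v_j+1)-1) \le 4 a^2 u_j v_j$ combined with the above yields
\[
N_J \;\le\; (2a-1)^2 + 4 a^2\, u_J v_J \;\le\; \bigl((2a-1)^2 + 4a^2\bigr)\, u_J v_J,
\]
where the final step uses $u_J v_J \ge 1$. This establishes the claim with $C_a := (2a-1)^2 + 4a^2$.

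The only step requiring any real thought is the multiplicative growth bound $u_{j+1} v_{j+1} \ge |\det A|\, u_j v_j$. A Perron--Frobenius approach through the spectral radius of $A$ would be tempting but becomes awkward for reducible $A$ (for instance, upper triangular scaling matrices). The elementary expansion above sidesteps this entirely and extracts precisely the factor $|\det A| \ge 2$ needed to sum the geometric series uniformly in $J$.
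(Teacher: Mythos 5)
Your proof is correct, and it is both tighter and more complete than the one in the paper. The paper argues by induction on $J$: it reduces the claim to the row-sum recurrence obtained from $A^{J}=A^{J-1}A$ (after a detour through the observation that one may as well work with $A^{T}$, which your argument never needs), and then simply stops after writing down
$\lambda_1^{(J)}+\lambda_2^{(J)}=\lambda_1^{(J-1)}(\lambda_1+\lambda_2)+\lambda_2^{(J-1)}(\lambda_3+\lambda_4)$ and its companion --- it never explains why the inductive step closes with a constant independent of $J$. That closure requires exactly the kind of multiplicative growth estimate you prove: with $u_j=\lambda_1^{(j)}+\lambda_2^{(j)}$ and $v_j=\lambda_3^{(j)}+\lambda_4^{(j)}$, your inequality $u_{j+1}v_{j+1}\ge(\lambda_1\lambda_4+\lambda_2\lambda_3)\,u_jv_j\ge|\det A|\,u_jv_j$ (valid since all entries are non-negative and $x+y\ge|x-y|$ for $x,y\ge0$), together with $|\det A|\ge2$ for an integer matrix whose eigenvalues exceed $1$ in modulus, is precisely the missing ingredient. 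You then organize the argument as a direct geometric-series summation $\sum_{j=0}^{J-1}u_jv_j\le u_Jv_J/(|\det A|-1)$ rather than an induction, which conveniently cancels the prefactor $(|\det A|-1)$ in the definition of $N_J$ and yields the explicit constant $C_a=(2a-1)^2+4a^2$, independent not only of $J$ but also of $A$. All the auxiliary steps check out: $u_j,v_j\ge1$ follows by induction because each row of $A$ has a positive entry (else $\det A=0$), and the crude bound $(a(u_j+1)-1)(a(v_j+1)-1)\le4a^2u_jv_j$ is valid since $u_j+1\le2u_j$ for $u_j\ge1$. In short, you take a genuinely different (summation versus induction) and self-contained route, and in doing so you supply the quantitative lemma the paper's own proof tacitly relies on.
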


\begin{proof}
The total number of elements $N_J$ in the reconstruction space $\Rcal_{N_J}$ up to a scale $J-1$ is prescribed by the matrix $A$ and
the support of the scaling function $\phi$ and the wavelet $\psi$, respectively. However, since the support of both $\phi$ and
$\psi$ is a square of the form $[0,a]^2$, the total number of elements in the reconstruction space $N_J$ is the same as if we would
have generated the wavelet system with the transpose of $A$. Hence, it is sufficient to prove
\begin{align*}
N_J \lesssim (\lambda_1^{(J)} + \lambda_2^{(J)})(\lambda_3^{(J)} + \lambda_4^{(J)}),
\end{align*}
where the constant depends on $a$. We prove the result by induction. For $J = 1$ it is clearly true. Hence,
\begin{align*}
(2a - 1 )^2 + (|\det A|-1) \sum _{j=0}^{J-1}& ( a(\lambda_1^{(j)} + \lambda_2^{(j)} +1) -1)( a(\lambda_3^{(j)} + \lambda_4^{(j)} +1) -1) \\
& \lesssim (\lambda_1^{(J-1)} + \lambda_2^{(J-1)})(\lambda_3^{(J-1)} + \lambda_4^{(J-1)}) + (|\det A|-1)( a(\lambda_1^{(J-1)} \\
& \quad + \lambda_2^{(J-1)} +1) -1)( a(\lambda_3^{(J-1)} + \lambda_4^{(J-1)} +1) -1)
\end{align*}
Now,
\begin{align*}
A^J &= \begin{pmatrix} \lambda_1^{(J)} & \lambda_2^{(J)} \\ \lambda_3^{(J)} & \lambda_4^{(J)} \end{pmatrix}
= \begin{pmatrix} \lambda_1^{(J-1)} & \lambda_2^{(J-1)} \\ \lambda_3^{(J-1)} & \lambda_4^{(J-1)} \end{pmatrix}\begin{pmatrix} \lambda_1 & \lambda_2 \\ \lambda_3 & \lambda_4 \end{pmatrix}
= \begin{pmatrix} \lambda_1^{(J-1)}\lambda_1 + \lambda_2^{(J-1)}\lambda_3 & \lambda_1^{(J-1)}\lambda_2 + \lambda_2^{(J-1)}\lambda_4 \\ \lambda_3^{(J-1)}\lambda_1 + \lambda_4^{(J-1)}\lambda_3 & \lambda_3^{(J-1)}\lambda_2 + \lambda_4^{(J-1)}\lambda_4 \end{pmatrix}
\end{align*}
which implies
\begin{align*}
\lambda_1^{(J)} + \lambda_2^{(J)} &= \lambda_1^{(J-1)}( \lambda_1 + \lambda_2) + \lambda_2^{(J-1)}(\lambda_3 + \lambda_4),\\
\lambda_3^{(J)} + \lambda_4^{(J)} &= \lambda_3^{(J-1)}( \lambda_1 + \lambda_2) + \lambda_4^{(J-1)}(\lambda_3 + \lambda_4).
\end{align*}
\end{proof}

Similar to the construction in \cite{AHP1}, we now define the truncated scaling space by
\[
    V^{(a)}_0 := \spann \{ \phi_{0,m} \, : \, m= (m_1,m_2) \in \Z^2, -a< m_1, m_2 < a \}
\]
and the truncated wavelet spaces by
\begin{align*}
    W^{(a)}_j := \spann \{ \psi^p_{j,m} \, : \, \  &m= (m_1,m_2) \in \Z^2, -a < m_1 < a(\lambda_1^{(j)}+ \lambda_{2}^{(j)}),  \\
    &-a < m_2 < a(\lambda_3^{(j)}+ \lambda_{4}^{(j)}),    p = 1,  \ldots, |\det A| -1  \}.
\end{align*}
The reconstruction space of interest to us is then defined by
\[
\Rcal_{N_J} = V^{(a)}_0 \oplus W^{(a)}_0 \oplus \ldots \oplus W^{(a)}_{J-1}.
\]


\subsubsection{Sampling Space}
\label{subsec:samplingspace}

To define the sampling space consisting of elements of the Fourier basis, we first choose $T_1,T_2>0$ sufficiently large such that
\[
    \Rcal \subset L^2([-T_1,T_2]^2).
\]
Thus, only functions supported on $[-T_1,T_2]^2$ are relevant to us. Indeed, choosing $T_1 \geq a-1$ and $T_2 \geq 2a-1$ is enough. Note that $\lambda_1^{(0)}=\lambda_4^{(0)} =1$ and $\lambda_2^{(0)}=\lambda_3^{(0)} =0$.
To allow an arbitrarily dense sampling, for each $\varepsilon \leq \frac{1}{T_1 + T_2}$, we define the sampling vectors
by
\beq \label{s_l}
    s_l^{(\varepsilon)} = \varepsilon e^{2 \pi i \varepsilon \langle l, \cdot \rangle} \cdot \chi_{[-T_1,T_2]^2}, \quad l \in \Z^2.
\eeq
Thus, we sample in each direction with the same sampling rate $\varepsilon$. Based on these sampling vectors, we now define
the sampling space $\Scal^{(\varepsilon)}$ by
\[
    \Scal^{(\varepsilon)} = \overline{\spann} \left\{ s_l^{(\varepsilon)} \, : \, l \in \Z^2 \right\}.
\]
The finite-dimensional subspaces $\Scal^{(\varepsilon)}_{M}$, $M = (M_1,M_2 )\in \N \times \N$, are then given by
\begin{align*}
    \Scal^{(\varepsilon)}_{M} = \spann \left\{ s_l^{(\varepsilon)} \, : \, l = (l_1,l_2) \in \Z^2, - M_i \leq l_i \leq M_i , i =1,2 \right\}.
\end{align*}


\subsection{Main Result}

Our main results concerns the stable sampling rate of the generalized sampling scheme for the sampling spaces
$\Scal^{(\varepsilon)}_M$ and the reconstruction spaces $\Rcal_{N_J}$. By Theorem \ref{theo:generalized_sampling_Cmk},
for this, we have to control the infimum cosine angle between the $\Rcal_{N_J}$ and $\Scal^{(\varepsilon)}_M$.
In particular, we wish to determine $M=(M_1,M_2) \in \N \times \N$ such that, for given $\theta>1$ and $J-1 \in \N$
the term $\cos(\omega(\Rcal_{N_J},\Scal_M^{(\varepsilon)}))$ can be bounded from below by $\theta^{-1}$, where $N_J$ denotes
the total number of reconstruction elements up to scale $J-1$.

For this to work, we will assume that the scaling matrix does not distort the grid too much. To make this
precise, let $I_M = \{Ê(l_1,l_2) \in \Z^2 \, : \, -M_i \leq l_i \leq M_i, i =1,2 \}, L_M = [-M_1,M_1]\times[-M_2,M_2]$ for $(M_1,M_2) \in \N^2$. Then, we assume that the so-called \emph{mesh norm} $\delta$ obeys
\begin{align}
 \delta:=    \max \limits_{x \in \varepsilon A^{-J}(L_M)} \min \limits_{k \in \varepsilon A^{-J}(\Z^2)} \| x - y + k\|_\infty  < \frac{\log\left(\frac{1}{\sqrt{\mu (L_M)}}+1\right)}{ 4 \pi \max\{|L_1|,|L_2|,|L_3|,|L_4|\}}, \label{assumption}
\end{align}
for some $\varepsilon$ independent of $J$, where $\mu$ denotes the 2D lebesgue measure and $L_i, i =1,2,3,4$ are bounds that are obtained by Lemma \ref{lemma:V_0plusW_j}. We will also discuss this assumption in Example \ref{wavehaarexp} for better understanding.

The following result shows that the stable sampling rate is indeed linear in the considered situation, showing
that this scheme is superior to any other reconstruction method in the sense of Theorem \ref{theo:general_linear_rate}.

\begin{theorem}\label{maintheoremgeneral}
Let $N_J$ be the number of reconstruction elements up to a fixed scale $J-1$, and let $\Rcal_{N_J}$ and $\Scal_M^{(\varepsilon)}$
be the reconstruction space and sampling space, respectively. Furthermore, assume (\ref{assumption}) is fulfilled. If $\theta>1$, then there exists a constant
$S^{(\theta)}$ independent of $J$ and $\varepsilon$ such that, if
\[
M_1 \geq \frac{ \lambda_1^{(J)} +  \lambda_3^{(J)}}{\varepsilon} S^{(\theta)},
\qquad
M_2 \geq \frac{ \lambda_2^{(J)} +  \lambda_4^{(J)}}{\varepsilon} S^{(\theta)},
\]
then, for $M = (M_1,M_2)$,
\[
\cos(\omega(\Rcal_{N_J},\Scal_M^{(\varepsilon)})) \geq \frac{1}{\theta}.
\]
In particular, the stable sampling rate obeys $\Theta(N_J,\theta) = \mathcal{O}(N_J)$ as $N_J \rightarrow \infty$ for every
fixed $\theta >1$.
\end{theorem}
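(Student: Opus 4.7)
The approach is to reduce the infimum-cosine-angle estimate to a tail bound on $|\hat f|^2$ that the wavelet structure can handle. By definition,
\[
\cos(\omega(\Rcal_{N_J},\Scal_M^{(\varepsilon)}))^2 = \inf_{\substack{f\in \Rcal_{N_J}\\ \|f\|=1}} \|P_{\Scal_M^{(\varepsilon)}} f\|^2,
\]
so the task is to show $\|f\|^2 - \|P_{\Scal_M^{(\varepsilon)}} f\|^2 \leq (1-\theta^{-2})\,\|f\|^2$ uniformly for $f\in\Rcal_{N_J}$. Since $\varepsilon \leq 1/(T_1+T_2)$, the sampling vectors $s_l^{(\varepsilon)}$ are the restriction to $[-T_1,T_2]^2$ of a Fourier orthonormal basis on a cube of side $1/\varepsilon$ and therefore form a Parseval frame for $L^2([-T_1,T_2]^2)$, giving
\[
\|f\|^2 = \varepsilon^2 \sum_{l\in\Z^2} |\hat f(\varepsilon l)|^2
\]
for every $f\in\Rcal_{N_J}$. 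A standard frame-theoretic manipulation then reduces the problem to bounding the discrete tail $\varepsilon^2 \sum_{l\notin I_M} |\hat f(\varepsilon l)|^2$ by a quantity of order $(1-\theta^{-2})\|f\|^2$.

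The crucial step is to turn this discrete tail into an estimate on the continuous tail $\int_{\xi\notin\varepsilon L_M}|\hat f(\xi)|^2\,d\xi$ and then to exploit the identity
\[
\widehat{\psi^p_{j,m}}(\xi) = |\det A|^{-j/2}\, e^{-2\pi i\langle A^{-j}m,\xi\rangle}\, \widehat{\psi^p}(A^{-jT}\xi).
\]
The mesh-norm hypothesis (\ref{assumption}) plays a Beurling-type density role here, ensuring that the Riemann-sum replacement loses only an absolute multiplicative constant. I would then invoke Lemma \ref{lemma:V_0plusW_j} to obtain an anisotropic pointwise envelope for $|\hat f(\xi)|^2$ in terms of $\|f\|^2$ and quantities built from $A^{-JT}\xi$, in which the constants $L_1,\ldots,L_4$ from (\ref{assumption}) capture how the four entries of $A^J$ stretch the essential frequency support of scale-$J$ wavelets. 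Changing variables by $\eta = A^{-JT}\xi$ absorbs the $|\det A|^{-J}$-Jacobian against the $L^2$-normalization of the finest-scale wavelets and sends the tail region
\[
\{\xi\in\R^2 : |\xi_1|>\varepsilon M_1 \text{ or } |\xi_2|>\varepsilon M_2\}
\]
into a set contained in $\{\eta : \max(|\eta_1|,|\eta_2|) > S^{(\theta)}\}$, precisely because the column sums $\lambda_1^{(J)}+\lambda_3^{(J)}$ and $\lambda_2^{(J)}+\lambda_4^{(J)}$ of $A^{JT}$ are the amounts by which it stretches the unit box in each coordinate direction. This explains the form of the lower bounds imposed on $M_1$ and $M_2$.

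With all $J$-dependence removed by the rescaling, the remaining tail integral depends only on $S^{(\theta)}$ and on the intrinsic decay of $\widehat\phi$ and $\widehat{\psi^p}$, which for compactly supported smooth wavelets furnishes polynomial decay of the form $(1+|\eta|)^{-\alpha}$. Choosing $S^{(\theta)}$ sufficiently large then makes this $\eta$-integral at most $1-\theta^{-2}$, which yields the desired angle bound. The linearity of the stable sampling rate follows at once: the preceding lemma shows $N_J \lesssim (\lambda_1^{(J)}+\lambda_2^{(J)})(\lambda_3^{(J)}+\lambda_4^{(J)})$, and the two pairs of row/column sums of $A^J$ are equivalent up to a constant depending only on $A$, so $M_1 M_2 \lesssim N_J$ and hence $\Theta(N_J,\theta)=\mathcal O(N_J)$.

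The main obstacle is the anisotropic change of variables in the tail estimate. In the 1D proof of \cite{AHP1} the dilation is a single scalar and the corresponding argument is essentially straightforward, but for a non-diagonal $A$ the axes along which $A^{JT}$ stretches frequency space rotate with $J$, and one must show that a product cut-off of the form $|\xi_i|>\varepsilon M_i$ in Cartesian coordinates still corresponds, after $\eta = A^{-JT}\xi$, to a cut-off controlled by a single scalar $S^{(\theta)}$ in $\eta$-coordinates. The mesh-norm assumption (\ref{assumption}) is designed exactly to make this replacement possible, and the delicate bookkeeping lies in extracting a constant $S^{(\theta)}$ that is uniform in both $J$ and $\varepsilon$.
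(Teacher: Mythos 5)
Your reduction to a tail bound is legitimate: the Parseval-frame identity for $\{s_l^{(\varepsilon)}\}_{l\in\Z^2}$ on $L^2([-T_1,T_2]^2)$ and the resulting inequality $\|P_{\Scal_M^{(\varepsilon)}}\varphi\|^2\ge\|\varphi\|^2-\sum_{l\notin I_M}|\langle\varphi,s_l^{(\varepsilon)}\rangle|^2$ are correct, and your identification of the column sums $\lambda_1^{(J)}+\lambda_3^{(J)}$ and $\lambda_2^{(J)}+\lambda_4^{(J)}$ as the stretching factors of $(A^J)^T$ on the box $[-S,S]^2$ correctly explains the form of the thresholds on $M_1,M_2$. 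Note, though, that the paper argues in the opposite direction: it bounds $\sum_{l\in I_M}|\langle\varphi,s_l^{(\varepsilon)}\rangle|^2$ from below directly, by expanding $\varphi$ in scale-$J$ scaling functions (Lemma \ref{lemma:V_0plusW_j}), factoring $\langle\varphi,s_l^{(\varepsilon)}\rangle=\varepsilon|\det A|^{-J/2}\Phi(\varepsilon(A^{-J})^Tl)\,\phihat(\varepsilon(A^{-J})^Tl)$ (Lemma \ref{lemma:phis_l}), periodizing the lattice sum over blocks $\tfrac{1}{\varepsilon}(A^J)^T(s,t)$, using $\sum_{m}|\phihat(\xi+m)|^2=1$ (Lemma \ref{lemma:orthosumone}) to make the $\phihat$-factor contribute at least $1/\theta$ once $S$ is large, and finally invoking the Marcinkiewicz--Zygmund-type lower bound of Proposition \ref{pottstheorem}, which is the only place assumption (\ref{assumption}) enters.

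Two steps of your sketch would fail as written. First, there is no pointwise envelope $|\widehat{\varphi}(\xi)|^2\lesssim\|\varphi\|^2 E((A^{-J})^T\xi)$ with $\int E$ independent of $J$: after the finest-scale expansion one has $|\widehat{\varphi}(\xi)|^2=|\det A|^{-J}|\Phi((A^{-J})^T\xi)|^2|\phihat((A^{-J})^T\xi)|^2$, and the trigonometric polynomial $\Phi$ has $\|\Phi\|_{L^\infty}^2$ as large as its number of terms (of order $|\det A|^J$, or worse for non-diagonal $A$) even when $\|\Phi\|_{L^2}=\|\varphi\|=1$; a pointwise bound therefore destroys the cancellation and leaves a tail estimate that blows up with $J$. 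One must keep the factor $|\Phi|^2$ intact, exploit its $\Z^2$-periodicity to collapse the tail onto a fundamental domain against the weight $\sum_{\|(s,t)\|_\infty>S}|\phihat(\cdot+(s,t))|^2\lesssim 1/S$, and only then compare the resulting weighted lattice sum of $|\Phi|^2$ with $\|\Phi\|^2$. Second, that comparison is exactly the ``Riemann-sum replacement'' you delegate to the mesh-norm hypothesis, but for your tail argument it must be an \emph{upper} Marcinkiewicz--Zygmund inequality (discrete sum $\lesssim\|\Phi\|^2$), whereas (\ref{assumption}) is used in the paper to prove the \emph{lower} inequality of Proposition \ref{pottstheorem}; a small mesh norm (high density) does not by itself prevent the sampled values of $\Phi$ from over-counting its $L^2$ mass. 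The paper's proof of Theorem \ref{theorem:boundarywavelets} does carry out your tail strategy successfully, but only in the dyadic case, where the reduced grid is an exact uniform product grid and Lemma \ref{lemma:trig} supplies the needed identity; for a general non-diagonal $A$ you would have to prove a separate Plancherel--P\'olya-type upper bound for the distorted lattice, which neither your sketch nor Proposition \ref{pottstheorem} provides.
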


Summarizing, this result shows that the required number of Fourier samples is optimally small up to a constant when utilizing 2D compactly
supported wavelets for the generalized sampling reconstruction.  This extends the result of \cite{AHP1} to the two-dimensional case.
We next consider a special, yet widely used choice for scaling matrices, namely diagonal matrices. This includes
two dimensional dyadic wavelets, which are those typically used in applications. This situation is also considered
in the numerical experiments presented in Section \ref{sec:numerics}.

\begin{example}\label{wavehaarexp}
In this example we want to demonstrate our results and, in particular, give some better understanding of the construction and assumption (\ref{assumption}). Furthermore, this example shall show that there are large classes of 2D wavelets that fulfill (\ref{assumption}). For this purpose, let
\[
    A = \begin{pmatrix} 2 & 0 \\ 0 & 2 \end{pmatrix}
\]
be the scaling matrix, which -- as mentioned before -- gives rise to three wavelet generators. Let $\phi$ and
$\psi^p, p = 1,2,3$ be two dimensional scaling and wavelet functions with compact support in $[0,a]^2, a\in\N$,
which might be obtained by tensor products of one dimensional scaling and wavelet functions, see \cite{Dau, Mallat}.

As in Subsection \ref{subsec:reconstructionspace}, we define
\begin{equation}\label{Omega1}
    \Omega_1:=\{ \phi_{0,m} \, : \, m= (m_1,m_2) \in \Z^2, |m_i | < a, i =1,2 \}
\end{equation}
and
\begin{equation}\label{Omega2}
   \Omega_2:= \{ \psi^p_{j,m} \, : \, j \in \N \cup \{0\}, m= (m_1,m_2) \in \Z^2, -a < m_i < 2^j a, i =1,2 , p = 1,2,3\},
\end{equation}
since these are the only functions whose support intersects $[0,a]^2$. Again, in line with our previous approach, we
then define the reconstruction space $\Rcal$ by
\[
    \Rcal = \overline{\spann} \{ \varphi \, : \, \varphi \in \Omega_1 \cup \Omega_2\},
\]
order the elements $\Omega_1 \cup \Omega_2$ analogously to (\ref{eq:ordering}), and set
\[
    \Rcal_N = \spann \{ \varphi_i \, : \, i = 1, \ldots, N\}, \quad N \in \N.
\]
Now, each function $\varphi$ in $\Rcal_N$ can be represented as a linear combination of scaling functions at highest level $J$, in particular, there exist positive integers $L_1,L_2,L_3,$ and $L_4$ such that
\begin{align*}
\varphi = \sum_{l_1=L_3}^{L_1} \sum_{l_2=L_4}^{L_2} \alpha_{l_1,l_2} \phi_{J,(l_1,l_2)}, \quad \alpha_{l_1,l_2} \in \C.
\end{align*}
This statement will be proven in Lemma \ref{lemma:V_0plusW_j}. With a view to (\ref{assumption}), the explicit expression of the bounds $L_i, i = 1,2,3,4$ are highly important. In fact, we should not choose them too large, otherwise (\ref{assumption}) might not hold. Since
\begin{align*}
\varphi = \sum_{l_1=L_3}^{L_1} \sum_{l_2=L_4}^{L_2} \langle \varphi, \phi_{J,(l_1,l_2)} \rangle \phi_{J,(l_1,l_2)},
\end{align*}
and is compactly supported, shifting by $(l_1,l_2)$ far enough leads to the fact that the coefficients become zero. For the scaling matrix $A= \diag(2,2)$ we have (see proof of Lemma \ref{lemma:V_0plusW_j})
\begin{align*}
L_1=L_3= 2^{J}(3a-1), \quad L_2 =L_4= -a+2^{J}(-a+1).
\end{align*}
Moreover, the mesh norm $\delta$ obeys
$
\delta \leq \frac{\varepsilon}{2^J}.
$
Hence, for $\varepsilon = \frac{1}{4\pi(3a-1)}$
\begin{align}
\delta \leq \frac{1}{4\pi(3a-1)2^J} \leq \frac{\log\left(\left(\frac{2^J}{\varepsilon}\right)^2+1\right)}{2\pi (3a-1)2^J}, \label{deltawaveineq}
\end{align}
so assumption (\ref{assumption}) is fulfilled. 
Thus, we can apply  Theorem \ref{maintheoremgeneral} to obtain a constant $S^\theta$ independent of $J$, such that for
\[
    M_1 , M_2 = \left \lceil \frac{2^J S^\theta}{\varepsilon}\right \rceil
\]
we have
\[
    \cos(\omega(\Rcal_{N_J},\Scal_M^{(\varepsilon)}))> \frac{1}{\theta}.
\]
Counting the numbers of elements in the space $\Rcal_{N_J}$ gives
\beq
N_J =(2a - 1)^2 + 3\sum \limits_{j=0}^{J-1} (2^ja + a-1)^2 = (2^{2J}-1)a^2 + 6a(a-1)(2^J-1) + 3J(a-1)^2 + (2a-1)^2 = \mathcal{O}(2^{2J}),  \label{levelnumber}
\eeq
which means the number of samples scales linearly with the number of reconstruction elements.
\end{example}


\section{Linear Sampling Rate for 2D Boundary Wavelets}

In applications, typically signals on a bounded domain are considered such as on the interval $[0,1]$. Aiming to avoid
artifacts at the boundaries $x=0, 1$, in \cite{CohDauVial} (see also \cite{Mallat}) boundary wavelets were introduced. The
considered wavelet system then consists of interior wavelets, which do not touch the boundary, and the just mentioned
boundary wavelets, leading to a system with an associated multiresolution analysis and prescribed vanishing moments
even at the boundary. Similar to the classical wavelet construction, this system can be lifted to 2D by tensor products.

In this section, we will show that in fact also this (boundary) wavelet system allows a linear stable sampling rate,
though with a proof differing significantly from the one of the `classical wavelet case' due to the different structure
at the highest scale. For this, we start with formally introducing 2D boundary wavelets -- which we use as an expression
for the whole wavelet system -- followed by our choice of reconstruction and sampling space.


\subsection{Construction of Boundary Wavelets}
\label{subsec:constructionboundary}

We start with the 1D construction of wavelets on the interval as introduced in \cite{CohDauVial}. For this, let $\phi$
be a compactly supported Daubechies scaling function with $p$ vanishing moments. It is well known that $\phi$ must then
have a support of size $2p-1$. By a shifting argument, we can assume that $\suppp(\phi) = [-p+1,p]$. In order to properly
define \ti{interior wavelets} and \ti{boundary wavelets}, the scale need to be large enough -- i.e., the support of the
wavelets need to be small enough -- to be able to distinguish between wavelets whose support fully lie in $[0,1]$ and
wavelets whose support intersect the boundary $x=0$ and, likewise, $x =1$.

Therefore, we now let $j \in \N$ such that $2p \leq 2^j$. Then there exist $2^j - 2p$ \ti{interior scaling functions}, i.e.,
scaling functions which have support inside $[0,1]$, defined by
\[
    \phiint_{j,n} = \phi_{j,n} = 2^{j/2} \phi(2^j\cdot - n), \quad \text{for } p\leq n < 2^j-p.
\]
Depending on boundary scaling functions $ \{\phileft_n\}_{n=0, \ldots, p-1}$ and $\{\phiright_n\}_{n=0, \ldots, p-1}$,
which we will introduce below, the $p$ \ti{left boundary scaling functions} are defined by
\[
    \phiint_{j,n} =  2^{j/2} \phileft_n(2^j\cdot), \quad \text{for } 0 \leq n < p,
\]
and the $p$ \ti{right boundary scaling functions} are
\[
    \phiint_{j,n} =  2^{j/2} \phiright_{2^j-1-n}(2^j(\cdot-1)), \quad \text{for } 2^j-p \leq n < 2^j.
\]
We remark that this leads to $2^j$ scaling functions in total, which is the number of original scaling functions $(\phi_{j,n})_n$
that intersect $[0,1]$.

We next sketch the idea of the construction of boundary scaling functions $ \{\phileft_n\}_{n=0, \ldots, p-1}$ as well as
$\{\phiright_n\}_{n=0, \ldots, p-1}$ following \cite{CohDauVial}, to the extent to which we require it in our
proofs. One starts by defining edge functions $\widetilde{\phi}^k$ on the positive axis $[0,\infty)$ by
\begin{align*}
    \widetilde{\phi}^k(x) = \sum_{n=0}^{2p-2} \binom{n}{k} \phi(x+n-p+1), \quad k = 0, \ldots, p-1,
\end{align*}
such that these edge functions are orthogonal to the interior scaling functions and such that they together generate all
polynomials up to degree $p-1$. After performing a Gram-Schmidt procedure one obtains the left boundary functions
$\phileft_k, k =0, \ldots, p-1$. The right boundary functions are then -- after some minor adjustments -- obtained by reflecting
the left boundary functions. This construction from \cite{CohDauVial} allows one to obtain a multiresolution analysis.

\begin{theorem}[\cite{CohDauVial}]
If $2^j \geq 2p$, then $\{\phiint_{j,n}\}_{n=0, \ldots, 2^j-1}$ is an orthonormal basis for a space $V^{\textint}_j$ that is
nested, i.e.
\begin{align*}
    V_j^{\textint} \subset V_{j+1}^{\textint},
\end{align*}
and complete, i.e.
\begin{align*}
    \overline{\bigcup_{j \geq \log_2{2p}} V^{\textint}_j} = L^2[0,1].
\end{align*}
\end{theorem}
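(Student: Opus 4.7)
The plan is to prove the three claims---orthonormality, nestedness, and density---in that order, leveraging the explicit construction sketched immediately before the theorem.

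For orthonormality of $\{\phiint_{j,n}\}$, the strictly interior functions with $p \le n < 2^j - p$ are just the Daubechies dilates $2^{j/2}\phi(2^j\cdot - n)$, supported inside $(0,1)$, so their mutual orthonormality is inherited directly from the Daubechies orthonormal system on $\mathbb{R}$. The $p$ left boundary functions $\phileft_k$ are orthonormal by construction (Gram--Schmidt applied to the edge functions $\widetilde{\phi}^k$), and the right boundary functions are orthonormal by the reflected construction. The delicate cross-orthogonality between boundary and interior reduces to orthogonality between each $\widetilde{\phi}^k$ and the interior translates $\phi(\cdot - n)$ with $n \ge p$: since $\widetilde{\phi}^k$ is a finite linear combination of $\phi(\cdot - m)$ with $|m| \le p - 1$ (after the change of indices $m = p - 1 - n$), and for $n \ge p$ the function $\phi(\cdot - n)$ is supported in $[1,\infty)$, the inner product over $[0,\infty)$ equals the one over $\mathbb{R}$, which vanishes by the orthonormality of the integer translates of $\phi$. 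Left--right cross-orthogonality is forced by $2^j \ge 2p$, which separates the supports of the two boundary families.

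For nestedness $V_j^{\textint} \subset V_{j+1}^{\textint}$, the engine is Daubechies' two-scale relation $\phi(\cdot - n) = \sum_m h_m\, \phi(2\cdot - 2n - m)$. For strictly interior $\phiint_{j,n}$ with $n$ far from both endpoints, the refinement indices $2n + m$ stay inside $[p, 2^{j+1} - p)$ and the embedding is direct. For boundary functions, and for the few interior functions within $O(p)$ of an endpoint, the refinement leaks into the fine-scale boundary zone. Here the key fact is that, by the Gram--Schmidt construction, the $p$ boundary functions on each side together with the nearest interior functions span the same $p$-dimensional subspace of $L^2[0,1]$ as the restricted translates $\phi_{j+1,n}|_{[0,1]}$ whose indices lie in the corresponding boundary zone; any leaked term can therefore be rewritten in terms of $\phileft_k$ or $\phiright_k$ at scale $j+1$, placing the entire refinement inside $V_{j+1}^{\textint}$. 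For $\phileft_k$ at scale $j$ itself, one applies the refinement equation to each translate of $\phi$ appearing in the definition of $\widetilde{\phi}^k$ and then pushes the resulting identity through the Gram--Schmidt change of basis, using that the Gram matrix of the edge functions on $[0,\infty)$ is scale invariant so the orthogonalization is the same at every scale.

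For density, I would use that $C_c^\infty(0,1)$ is dense in $L^2[0,1]$. For $f \in C_c^\infty(0,1)$, once $2^j$ exceeds a threshold depending on $\mathrm{dist}(\mathrm{supp}\, f, \{0,1\})$, the $L^2(\mathbb{R})$-orthogonal projection $P_{V_j(\mathbb{R})} f$ picks up only interior translates $\phi_{j,n}$ with $p \le n < 2^j - p$, which are precisely the strictly interior elements of $V_j^{\textint}$; hence $P_{V_j^{\textint}} f = (P_{V_j(\mathbb{R})} f)|_{[0,1]}$, and standard MRA density on $\mathbb{R}$ yields $\|f - P_{V_j^{\textint}} f\|_{L^2[0,1]} \to 0$. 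Density of $C_c^\infty(0,1)$ in $L^2[0,1]$ then closes the argument. I expect the main obstacle to be the nestedness step, specifically the bookkeeping that identifies the leaked fine-scale boundary-zone translates with combinations of $\phileft_k$ and $\phiright_k$; this is where the edge-function construction really earns its keep.
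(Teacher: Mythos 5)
First, a point of reference: the paper does not prove this statement at all --- it is quoted from Cohen--Daubechies--Vial \cite{CohDauVial} --- so there is no in-paper argument to compare against and your proposal must stand on its own. Your density argument is sound, and most of the orthonormality argument is correct; in particular, reducing boundary-versus-interior orthogonality to $\langle \phi(\cdot-m),\phi(\cdot-n)\rangle_{L^2(\R)}=0$ for $|m|\le p-1$, $n\ge p$ is exactly right. However, your justification of left--right orthogonality fails as stated: the hypothesis $2^j\ge 2p$ does \emph{not} separate the supports of the two boundary families, since $\phileft_k$ has support of length up to $2p-1$, so the rescaled left and right families occupy $[0,(2p-1)2^{-j}]$ and $[1-(2p-1)2^{-j},1]$, which overlap whenever $2p\le 2^j<4p-2$ (e.g.\ $p=2$, $2^j=4$). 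The conclusion is still true, but via index disjointness rather than support disjointness: each left boundary function is a combination of translates $\phi(2^j\cdot-m)|_{[0,1]}$ with $m\le p-1$ and each right one uses $m\ge 2^j-p\ge p$; the cut-offs at $1$ (resp.\ $0$) are inactive on the supports involved, so the $[0,1]$ inner products reduce to $L^2(\R)$ inner products of distinct integer translates, which vanish.

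The serious gap is in nestedness. Your key claim --- that the $p$ boundary functions on each side, together with nearby interior functions, span the same subspace as the restricted translates $\phi_{j+1,m}|_{[0,1]}$ with $m$ in the boundary zone, so that ``any leaked term can be rewritten in terms of $\phileft_k$ or $\phiright_k$ at scale $j+1$'' --- is false. The left boundary zone contains $2p-1$ restricted translates ($m=-p+1,\dots,p-1$), which are linearly independent on $[0,1]$ and span a $(2p-1)$-dimensional space, whereas $V_{j+1}^{\textint}$ contributes only $p$ boundary dimensions per side; indeed $\dim V_{j+1}^{\textint}=2^{j+1}$ while the span of \emph{all} restricted translates meeting $(0,1)$ has dimension $2^{j+1}+2p-2$, so $V_{j+1}^{\textint}$ is a proper subspace of the latter and a generic leaked term does \emph{not} lie in it. What must actually be shown is that the \emph{specific} binomial-weighted combinations produced by refining $\widetilde{\phi}^k$ land in $\spann\{\widetilde{\phi}^l(2\,\cdot)|_{[0,\infty)}\}+\spann\{\phi(2\,\cdot-n):n\ge p\}$; this rests on a Pascal-type identity for $\binom{n}{k}$ interacting with the refinement mask (equivalently, the degree-$(p-1)$ polynomial reproduction of $\phi$) and is precisely the nontrivial computation in \cite{CohDauVial}. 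As written, your argument assumes the conclusion of that computation, and the scale-invariance of the Gram matrix that you invoke afterwards, while true, does not fill this hole.
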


Next, we define an orthonormal basis for the wavelet space $W^{\textint}_j$, which is as usual defined as the orthogonal
complement of $V_j^{\textint}$ in $V_{j+1}^{\textint}$. For this, let $\psi$ be the corresponding wavelet function to
$\phi$ with $p$ vanishing moments and $\suppp \psi = [-p+1,p]$. Similar to the construction of the scaling functions,
we will obtain interior wavelets and boundary wavelets, which then constitutes the set of wavelets in the interval.
Again based on a careful choice of boundary wavelets $(\psileft_n)_n$ and $(\psiright_k)_k$, for which we refer to
\cite{CohDauVial}, we define $2^j-2p$ \ti{interior wavelets} by
\[
    \psiint_{j,n} = \psi_{j,n} = 2^{j/2} \psi(2^j\cdot - n), \quad \text{for } p\leq n < 2^j-p,
\]
$p$ \ti{left boundary wavelets}
\[
    \psiint_{j,n} =  2^{j/2} \psileft_n(2^j\cdot), \quad \text{for } 0 \leq n < p,
\]
and $p$ \ti{right boundary wavelets}
\[
    \psiint_{j,n} =  2^{j/2} \psiright_{2^j-1-n}(2^j(\cdot-1)), \quad \text{for } 2^j-p \leq n < 1^j.
\]

Summarizing, the following result hold for these wavelet functions.

\begin{theorem}[\cite{CohDauVial}]\label{theorem:1Dboundarywaveletsoverview}
Let $2^J \geq 2p$. Then the following properties hold:
\begin{compactenum}[i)]
\item $\{ \psiint_{J,n} \}_{n = 0, \ldots, 2^J-1}$ is an orthonormal basis for $W_J^{\textint}$.
\item $L^2[0,1]$ can be decomposed as
\[
L^2[0,1]  = V_J^{\textint} \oplus W_{J}^{\textint} \oplus W_{J+1}^{\textint} \oplus W_{J+2}^{\textint} \oplus \ldots =
V_J^{\textint} \bigoplus \limits_{j=J}^{\infty} W_{j}^{\textint}.
\]
\item $\left\{  \{ \phiint_{J,m} \}_{m = 0, \ldots, 2^J-1}, \{ \psiint_{j,n} \}_{j \geq J,n = 0, \ldots, 2^j-1}\right\}$
is an orthonormal basis for $L^2[0,1]$.
\item If $\phi, \psi \in C^r[0,1]$, then $\left\{  \{ \phiint_{j,m} \}_{m = 0, \ldots, 2^J-1}, \{ \psiint_{j,n} \}_{j \geq J,n = 0,
 \ldots, 2^j-1}\right\}$ is an unconditional basis for $C^s[0,1]$ for all $s <r$.
\end{compactenum}
\end{theorem}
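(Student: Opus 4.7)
The plan is to treat items (i)--(iii) as structural consequences of the Cohen--Daubechies--Vial construction sketched just before the theorem, and to isolate the real analytic content in (iv).

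For (i), I would begin with a dimension count: the preceding MRA theorem gives $\dim V_J^{\textint} = 2^J$ and $\dim V_{J+1}^{\textint} = 2^{J+1}$, so $W_J^{\textint} = V_{J+1}^{\textint} \ominus V_J^{\textint}$ has dimension $2^J$, matching the $2^J - 2p$ interior wavelets $\psiint_{J,n}$ plus $2p$ boundary wavelets. Orthonormality within the interior family is inherited from the full-line wavelet system, since for $p \leq n < 2^J - p$ the supports $[(n-p+1)/2^J,(n+p)/2^J]$ lie inside $(0,1)$. The boundary atoms $\psileft_n$ and $\psiright_n$ are built in \cite{CohDauVial} by the same recipe used for scaling functions: form edge functions $\widetilde{\psi}^k$ as combinations of full-line $\psi$, project out their $V_J^{\textint}$-components, and apply Gram--Schmidt. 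This yields mutual orthonormality of the boundary atoms and orthogonality to every $\phiint_{J,m}$. Orthogonality to the interior wavelets then reduces to inner products of $V_{J+1}^{\textint}$-elements supported near the boundary against $\psi_{J,n}$, and a final dimension argument closes the claim that the listed $2^J$ functions are an ONB of $W_J^{\textint}$.

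For (ii), I iterate the defining splitting $V_{j+1}^{\textint} = V_j^{\textint} \oplus W_j^{\textint}$ to obtain, for every $K \geq 1$,
\[
V_{J+K}^{\textint} = V_J^{\textint} \oplus \bigoplus_{j=J}^{J+K-1} W_j^{\textint},
\]
and take the closure as $K \to \infty$, invoking the completeness $\overline{\bigcup_j V_j^{\textint}} = L^2[0,1]$ from the preceding theorem to obtain the claimed orthogonal decomposition. Claim (iii) then follows by concatenating the ONB $\{\phiint_{J,m}\}_{m=0,\ldots,2^J-1}$ of $V_J^{\textint}$ with the ONBs of $W_j^{\textint}$ from (i) for every $j \geq J$.

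The main obstacle is (iv). The plan is to establish a norm equivalence of the type
\[
\|f\|_{C^s} \asymp \max_{m} |\langle f,\phiint_{J,m}\rangle| + \sup_{j \geq J,\ n} 2^{j(s+1/2)} |\langle f,\psiint_{j,n}\rangle|
\]
for $0 < s < r$, which immediately upgrades the Schauder basis (iii) to an \emph{unconditional} basis of $C^s[0,1]$, since the right-hand side depends only on the moduli of the coefficients and is thus invariant under arbitrary sign changes. The upper bound on the coefficients uses the $p$ vanishing moments of $\psi$: for interior atoms one expands $f$ in a Taylor polynomial at the center of $\suppp \psiint_{j,n}$ and exploits the vanishing moments to gain an extra $2^{-js}$ beyond the $2^{-j/2}$ from normalization; for boundary atoms the vanishing moment property is preserved by the Gram--Schmidt step of \cite{CohDauVial}, so the same estimate applies. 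The reverse inequality is a Bernstein/Jackson-type argument: one writes $f$ as its wavelet series and estimates the $C^s$ norm term by term, summing a geometric series in $2^{-j(r-s)}$ that converges precisely because $s<r$ and $\phi,\psi \in C^r[0,1]$. The technical obstacle is handling boundary wavelets uniformly with interior ones in both directions; this is exactly where the CDV construction earns its keep, since by design the boundary atoms inherit both the regularity class $C^r$ and the full $p$ vanishing moments against polynomials from the edge functions $\widetilde{\psi}^k$.
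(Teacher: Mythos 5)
This theorem is imported verbatim from \cite{CohDauVial} (cf.\ also \cite{Mallat}); the paper you are reading gives no proof of it whatsoever, so there is no in-paper argument to compare yours against. Judged on its own terms, your reconstruction follows the standard route and is essentially sound: the dimension count $2^J = (2^J-2p) + p + p$ for (i), the telescoping of $V_{j+1}^{\textint} = V_j^{\textint}\oplus W_j^{\textint}$ together with completeness for (ii), and the concatenation of the ONBs for (iii) are exactly how this is done.

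Two points deserve sharpening. First, in (i) you suggest that orthogonality of the boundary wavelets to the interior wavelets ``reduces to inner products of elements supported near the boundary against $\psiint_{J,n}$''; note that the supports genuinely overlap (an interior wavelet with $n$ close to $p$ reaches down to $x = 1/2^J$, inside the support of the left boundary atoms), so this orthogonality is not a support argument but must be built into the construction --- in \cite{CohDauVial} the boundary wavelets are produced inside the orthogonal complement of $V_J^{\textint}\oplus\spann\{\psiint_{J,n}: p\le n<2^J-p\}$ within $V_{J+1}^{\textint}$, after which the dimension count you invoke does close the argument. Second, in (iv) the cleanest source of the $p$ vanishing moments of the boundary wavelets is not the Gram--Schmidt step per se but the combination of $W_j^{\textint}\perp V_j^{\textint}$ with the fact that $V_j^{\textint}$ contains all polynomials of degree $<p$ restricted to $[0,1]$ (this is what the edge functions $\widetilde{\phi}^k$ are for); with that in hand your Jackson/Bernstein norm equivalence is the right mechanism. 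One caveat on the statement itself rather than your proof: $C^s[0,1]$ is non-separable for non-integer $s$, so the ``unconditional basis'' assertion is to be read, as in \cite{CohDauVial}, as the two-sided coefficient characterization of the $C^s$ norm (equivalently, a basis of the separable closure of the span), which is precisely the norm equivalence you propose to prove.
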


As mentioned before, this system gives rise to a 2D system by tensor products, i.e., by the standard 2D
separable wavelet construction. In particular, this 2D system again constitutes an MRA, see \cite{Mallat}.


\subsection{Sampling and Reconstruction Space}
\label{subsec:samplingreconstruction_b_wavelets}


\subsubsection{Reconstruction Space}

For defining the reconstruction space, we will now assume that the region of interest is $[0,1]^2$ instead of
$[0,a]^2$ with $a \in \N$, as previously chosen. Our starting point is a 1D compactly supported Daubechies scaling
function with $p$ vanishing moments (cf. Subsection \ref{subsec:constructionboundary}). Recall that Daubechies wavelets have at least the following frequency decay,
\beq
    |\phihat(\xi) | \lesssim \frac{1}{1 + |\xi|}, \quad \text{ as } |\xi| \rightarrow \infty. \label{eq:freqdecay}
\eeq
Let $\psi$ be a corresponding wavelet with $p$ vanishing moments, and let $\phiint$ and $\psiint$ be the wavelets on the
interval as introduced in the previous subsection. Let $J_0$ be the smallest number such that $2^{J_0} \geq 2p$. Then the
associated 2D scaling functions are of the form
\[
    \phiint_{J_0,(n_1,n_2)} := \phiint_{J_0,n_1} \otimes \phiint_{J_0,n_2}, \quad 0\leq n_1,n_2 \leq 2^{J_0}-1
\]
and, for $0 \leq n_1,n_2 \leq 2^j -1$ with $j \geq J_0$, the 2D wavelet functions are defined by
\[
    \psi^{\textint,k}_{j,(n_1,n_2)} := \begin{cases}
     \phiint_{j,n_1} \otimes \psiint_{j,n_2}, & j \geq J_0, k=1,\\
     \psiint_{j,n_1} \otimes \phiint_{j,n_2}, & j \geq J_0, k=2, \\
     \psiint_{j,n_1} \otimes \psiint_{j,n_2}, & j \geq J_0, k=3.
     \end{cases}
\]
Next, let
\[
    \Omega_1 =  \{ \phiint_{J_0,(n_1,n_2)} \, : \, 0 \leq n_1,n_2 \leq 2^{J_0}-1 \}
\]
and
\[
    \Omega_2 =  \{  \psi^{\textint,k}_{j,(n_1,n_2)}\, : \, j =J_0,1, \ldots, J-1, 0 \leq n_1,n_2\leq 2^j-1, k =1,2,3 \}.
\]
Then, for a fixed scale $J$, and for $N_J=2^{2J}$, the reconstruction space $\Rcal_{N_J}$ is given by
\beq
\label{eq:rec_space_b_wavelets}
    \Rcal_{N_J} = \spann\{ \varphi_i \, : \, \varphi_i \in \Omega_1 \cup \Omega_2, i = 1, \ldots, N_J\}.
\eeq
An ordering can be obtained analogously as in Subsection \ref{subsec:reconstructionspace}. 


\subsubsection{Sampling Space}

The sampling space can be chosen similar to Subsection \ref{subsec:samplingspace}, i.e., for $\varepsilon \leq 1$, we set
\[
    s_l^{(\varepsilon)} = \varepsilon e^{2 \pi i \varepsilon \langle l, \cdot \rangle} \cdot \chi_{[0,1]^2}, \quad l \in \Z^2,
\]
and for $M = (M_1,M_2 )\in \N \times \N$
\beq
\label{eq:samp_space_b_wavelets}
    \Scal^{(\varepsilon)}_{M} = \spann \left\{ s_l^{(\varepsilon)} \, : \, l = (l_1,l_2) \in \Z^2, - M_i \leq l_i \leq M_i , i =1,2 \right\}.
\eeq


\subsection{Main Result}

Our main result of this section states the linearity of the stable sampling rate for boundary wavelets, whose
proof is presented in Section \ref{sec:proof_b_wavelets}.

\begin{theorem}\label{theorem:boundarywavelets}
Let $J \in \N$, $\varepsilon \leq 1$, and $\theta >1$. Further, let $\Scal_M^{(\varepsilon)}$ and $\Rcal_{N_J}$ be defined as
\ref{eq:samp_space_b_wavelets} and \eqref{eq:rec_space_b_wavelets}, respectively. Then
\begin{align*}
    \inf \limits_{\substack{ \varphi \in \Rcal_{N_J} \\ \| \varphi\| =1 }} \| P_{S_M^{(\varepsilon)}} \varphi \| \geq \frac{1}{\theta}
\end{align*}
for $M = (\lceil S^\theta/\varepsilon\rceil 2^J,\lceil S^\theta /\varepsilon\rceil 2^J) \in \N \times \N$, where $S^\theta$ is a
constant independent of $J$.
In particular, the stable sampling rate obeys $\Theta(N_J,\theta) = \mathcal{O}(N_J)$ as $N_J \rightarrow \infty$ for every
fixed $\theta >1$
\end{theorem}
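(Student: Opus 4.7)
The strategy mirrors the architecture of Theorem~\ref{maintheoremgeneral}, replacing the pure translation structure of compactly supported wavelets by the interior/boundary decomposition of \cite{CohDauVial}, and then reducing everything to a single one-dimensional Fourier-tail estimate.

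First, I would reduce to the top-level scaling space. By Theorem~\ref{theorem:1Dboundarywaveletsoverview}(ii) the 1D system spanning $\Rcal_{N_J}$ coincides with $V_J^{\textint}$, so in 2D one has $\Rcal_{N_J}=V_J^{\textint}\otimes V_J^{\textint}$ with orthonormal basis $\{\phiint_{J,(n_1,n_2)}\}_{0\leq n_1,n_2<2^J}$. Any $\varphi\in\Rcal_{N_J}$ thus admits an expansion $\varphi=\sum c_{n_1,n_2}\,\phiint_{J,n_1}\otimes\phiint_{J,n_2}$ with $\|\varphi\|^2=\sum|c_{n_1,n_2}|^2$. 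Since for $\varepsilon\leq 1$ the family $\{s_l^{(\varepsilon)}\}_{l\in\Z^2}$ is a Parseval frame of $L^2([0,1]^2)$, giving $\|f\|^2=\sum_l|\langle f,s_l^{(\varepsilon)}\rangle|^2=\varepsilon^2\sum_l|\hat f(\varepsilon l)|^2$, the standard frame-residual inequality $\|P_{\Scal_M^{(\varepsilon)}}\varphi\|^2\geq\|\varphi\|^2-\sum_{l\notin I_M}|\langle\varphi,s_l^{(\varepsilon)}\rangle|^2$ reduces the statement to the Fourier tail bound $\varepsilon^2\sum_{l\notin I_M}|\hat\varphi(\varepsilon l)|^2\leq(1-\theta^{-2})\|\varphi\|^2$.

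Second, I would split $\Z^2\setminus I_M$ as $\{|l_1|>M_1\}\cup\{|l_1|\leq M_1,\,|l_2|>M_2\}$ and, by the symmetric role of the two coordinates, bound only the first piece by $\tfrac12(1-\theta^{-2})\|\varphi\|^2$. Applying 1D Parseval in the $l_2$ variable (with $l_1$ frozen) recognises $\hat\varphi(\varepsilon l_1,\cdot)$ as the 1D Fourier transform of $h_{\varepsilon l_1}(y)=\sum_{n_2}\hat f_{n_2}(\varepsilon l_1)\,\phiint_{J,n_2}(y)\in V_J^{\textint}$, where $f_{n_2}(x)=\sum_{n_1}c_{n_1,n_2}\,\phiint_{J,n_1}(x)$. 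A short computation then yields
\begin{align*}
\varepsilon^2\sum_{|l_1|>M_1}\sum_{l_2\in\Z}|\hat\varphi(\varepsilon l_1,\varepsilon l_2)|^2 \;=\; \sum_{n_2}\varepsilon\sum_{|l|>M_1}|\hat f_{n_2}(\varepsilon l)|^2,
\end{align*}
and, using $\sum_{n_2}\|f_{n_2}\|^2=\|\varphi\|^2$, the task is reduced to the one-dimensional tail estimate
\begin{align*}
(\star)\qquad \varepsilon\sum_{|l|>M_1}|\hat f(\varepsilon l)|^2 \;\leq\; \tfrac12(1-\theta^{-2})\|f\|^2, \qquad f\in V_J^{\textint},\ M_1\geq\lceil S^\theta/\varepsilon\rceil 2^J.
\end{align*}

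Third, I would prove $(\star)$ by decomposing $f=f_{\textleft}+f_{\textint}+f_{\textright}$ into its left-boundary, interior, and right-boundary pieces. For the interior part $f_{\textint}=\sum c_n\phi_{J,n}$ one has $\hat f_{\textint}(\xi)=2^{-J/2}\hat\phi(\xi/2^J)C(\xi)$ with $C$ of period $2^J$; regrouping $l$ modulo $2^J/\varepsilon$ and using the MRA orthonormality identity $\sum_{k\in\Z}|\hat\phi(\xi+k)|^2\equiv 1$ to eliminate the trigonometric factor $C$, the Daubechies decay \eqref{eq:freqdecay} bounds the remaining frequency tail by $\sum_{|k|>S^\theta}|\hat\phi(\xi+k)|^2\lesssim 1/S^\theta$, whence $\varepsilon\sum_{|l|>M_1}|\hat f_{\textint}(\varepsilon l)|^2\lesssim\|f_{\textint}\|^2/S^\theta$. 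For the boundary pieces, the Cohen--Daubechies--Vial construction expresses each $\phileft_n$ (resp.\ $\phiright_n$) as a finite combination $\sum_{k=0}^{2p-2}\alpha_{n,k}\phi(\cdot+k-p+1)$ whose coefficients are bounded solely in terms of $p$, so that $|\widehat{\phileft_n}(\xi)|\leq C_p|\hat\phi(\xi)|$ and a cruder but analogous computation gives $\varepsilon\sum_{|l|>M_1}|\hat f_{\textleft}(\varepsilon l)|^2\lesssim C_p^2\|f_{\textleft}\|^2/S^\theta$, and similarly for $f_{\textright}$. Choosing $S^\theta$ large enough so that the three contributions together are at most $\tfrac12(1-\theta^{-2})\|f\|^2$ closes the argument.

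The principal obstacle is the boundary scaling functions: they destroy the pure translation-invariant periodicity that drives the clean periodization argument of Theorem~\ref{maintheoremgeneral}. The remedy is precisely the \cite{CohDauVial} edge construction, which exhibits each boundary function as a fixed, finite-length combination of translates of the underlying Daubechies $\phi$; the resulting $p$-dependent constants are absorbed into the dimensionless threshold $S^\theta$, so the linearity of the sampling rate in $J$ is preserved.
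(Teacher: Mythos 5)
Your proposal is correct and follows essentially the same route as the paper's proof in Section \ref{sec:proof_b_wavelets}: reduce to the scale-$J$ scaling space via MRA nestedness, periodize the frequency lattice, separate the trigonometric-polynomial factor (the paper's Lemma \ref{lemma:trig} is the grid-Parseval step that actually eliminates it, rather than the identity $\sum_k|\phihat(\xi+k)|^2=1$ you cite), and bound the resulting tail by $O(1/S)$ using the Daubechies decay \eqref{eq:freqdecay}, with boundary functions handled through their expression via translates and reflections of $\phi$. The only cosmetic differences are your tensor reduction to a one-dimensional tail estimate (the paper estimates the 2D product decay directly) and your three-way interior/left/right split (the paper groups interior-plus-left as $\phi$-translates and the right edge as translates of the reflected function $\phi^{\#}$).
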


\begin{rem}
This result will be proved independently of Theorem \ref{maintheoremgeneral}, since boundary wavelets do
constitute an MRA but at highest scaling level, the space $V_J$ contains more than one generating function $\phi$.
It uses the reflected functions as well.
\end{rem}


\begin{figure}[h!]
        \begin{subfigure}[b]{0.5\textwidth}
        \includegraphics[scale=0.55]{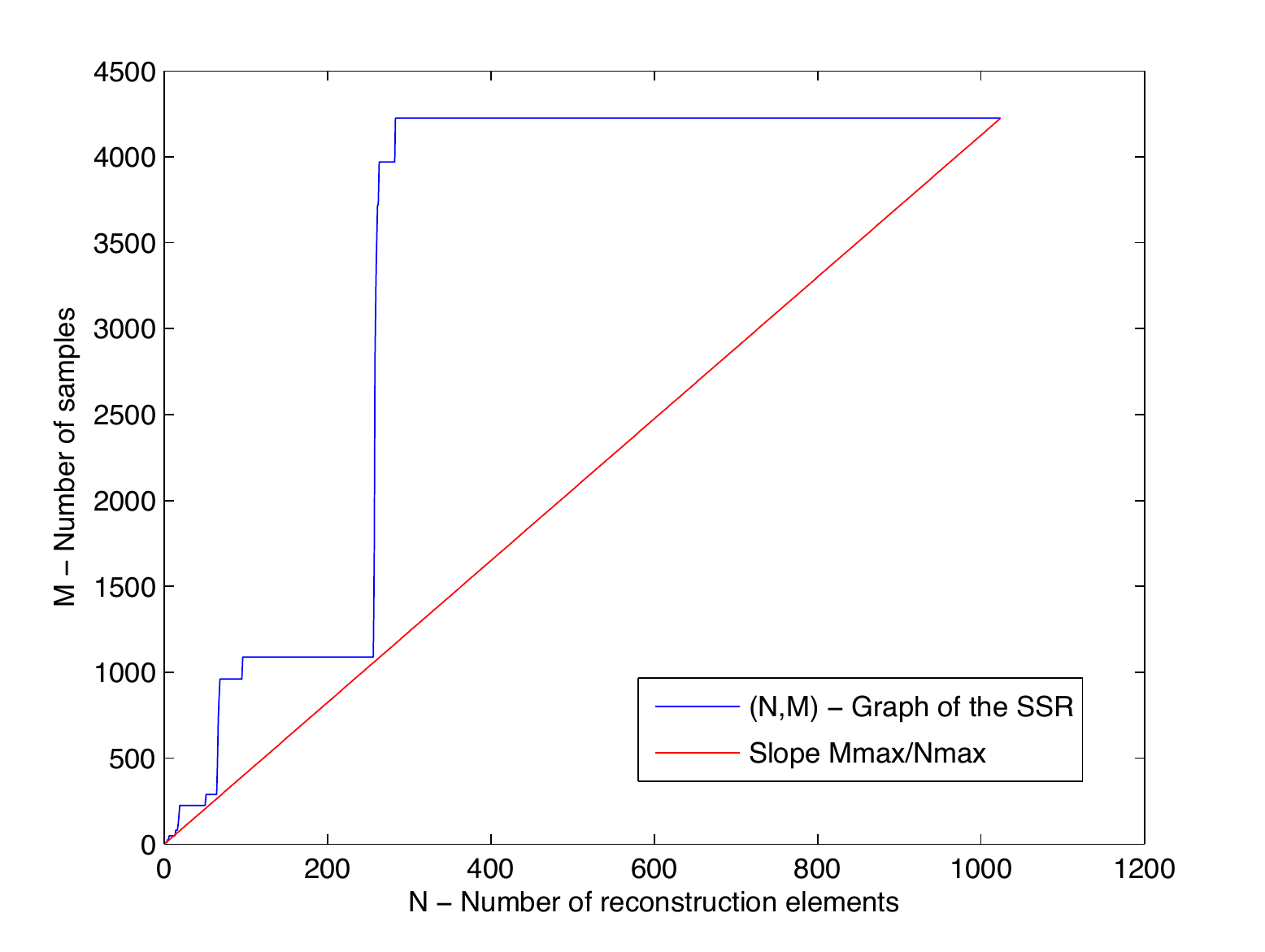}
        \caption{$\Theta(N,\theta)$ for Haar. Computed for  $J =4$ up to $N = 1024$ with $\varepsilon = 1/2$ and $\theta^{-1} = 0.45$.}\label{haar-1}
        \end{subfigure}\quad
    \begin{subfigure}[b]{0.5\textwidth}
        \includegraphics[scale=0.55]{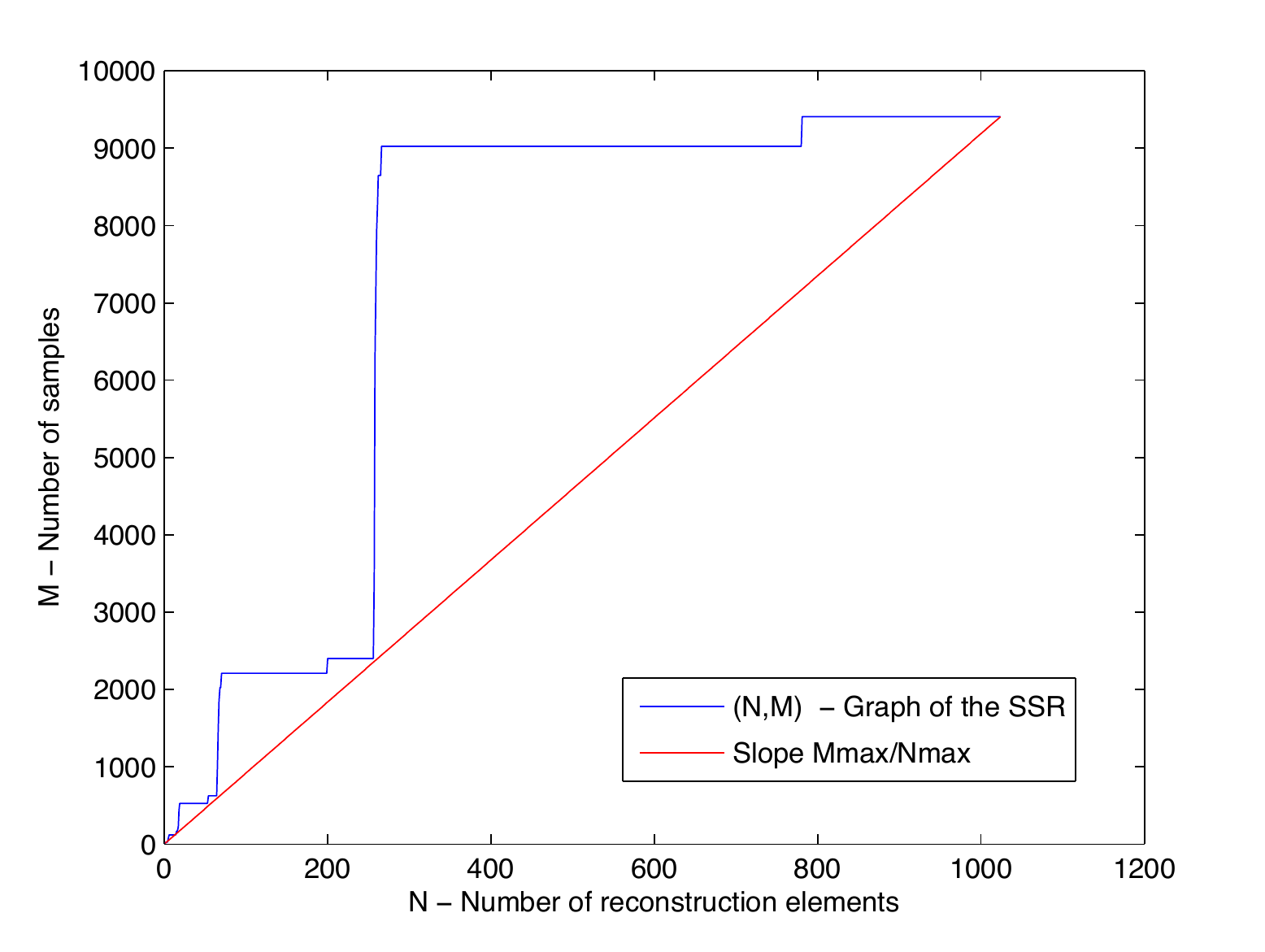}
        \caption{$\Theta(N,\theta)$ for Haar. Computed for  $J =4$ up to $N = 1024$ with $\varepsilon = 1/3$ and $\theta^{-1} = 0.45$.}\label{haar-2}
        \end{subfigure}
                \begin{subfigure}[b]{0.5\textwidth}
        \includegraphics[scale=0.55]{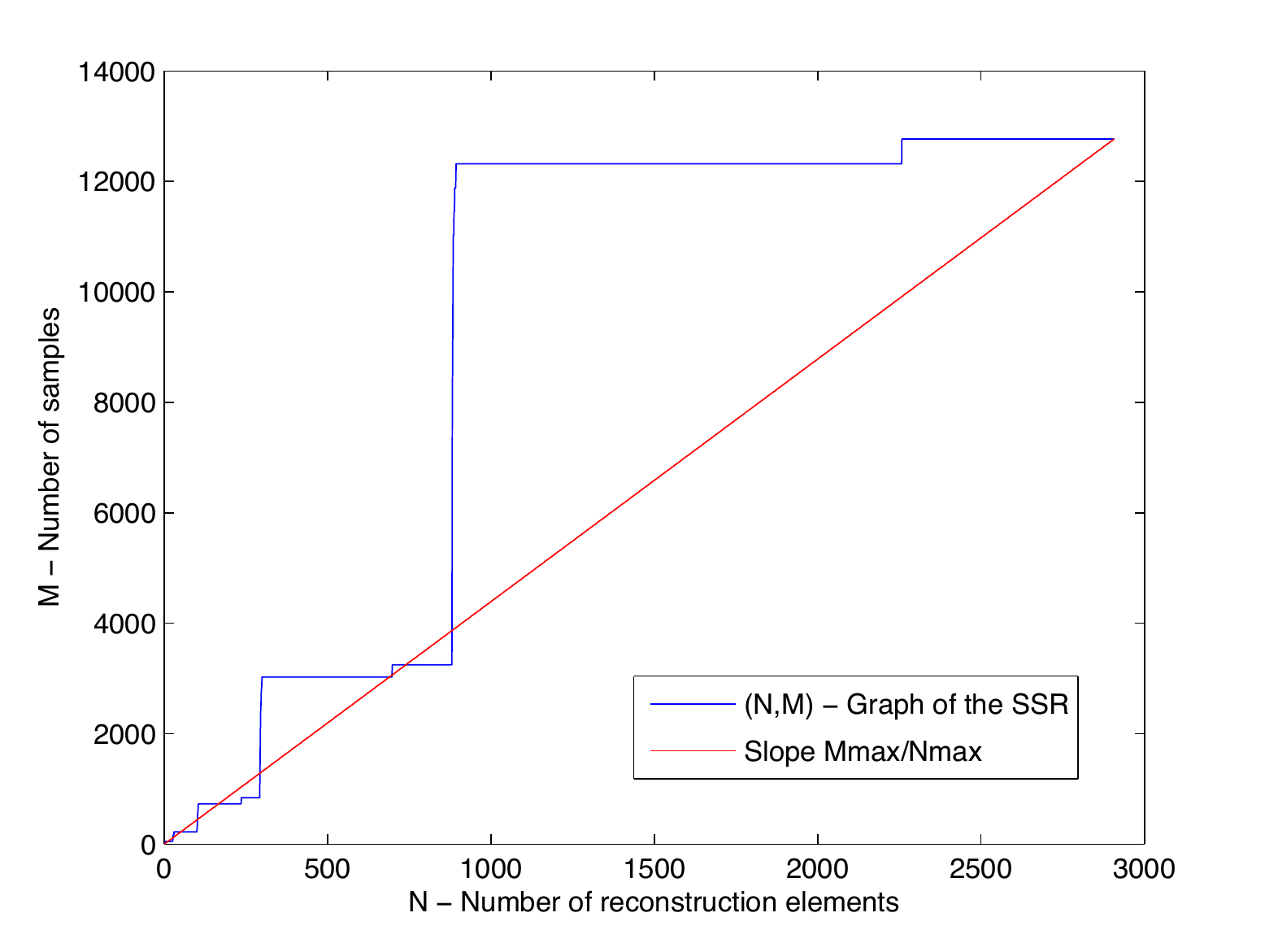}
        \caption{$\Theta(N,\theta)$ for Daubechies-4. Computed for  $J=3$ up to $N = 2908$ with $\varepsilon = 1/7$ and $\theta^{-1} = 0.45$.}\label{db4-1}
        \end{subfigure}\quad
        \begin{subfigure}[b]{0.5\textwidth}
            \includegraphics[scale=0.55]{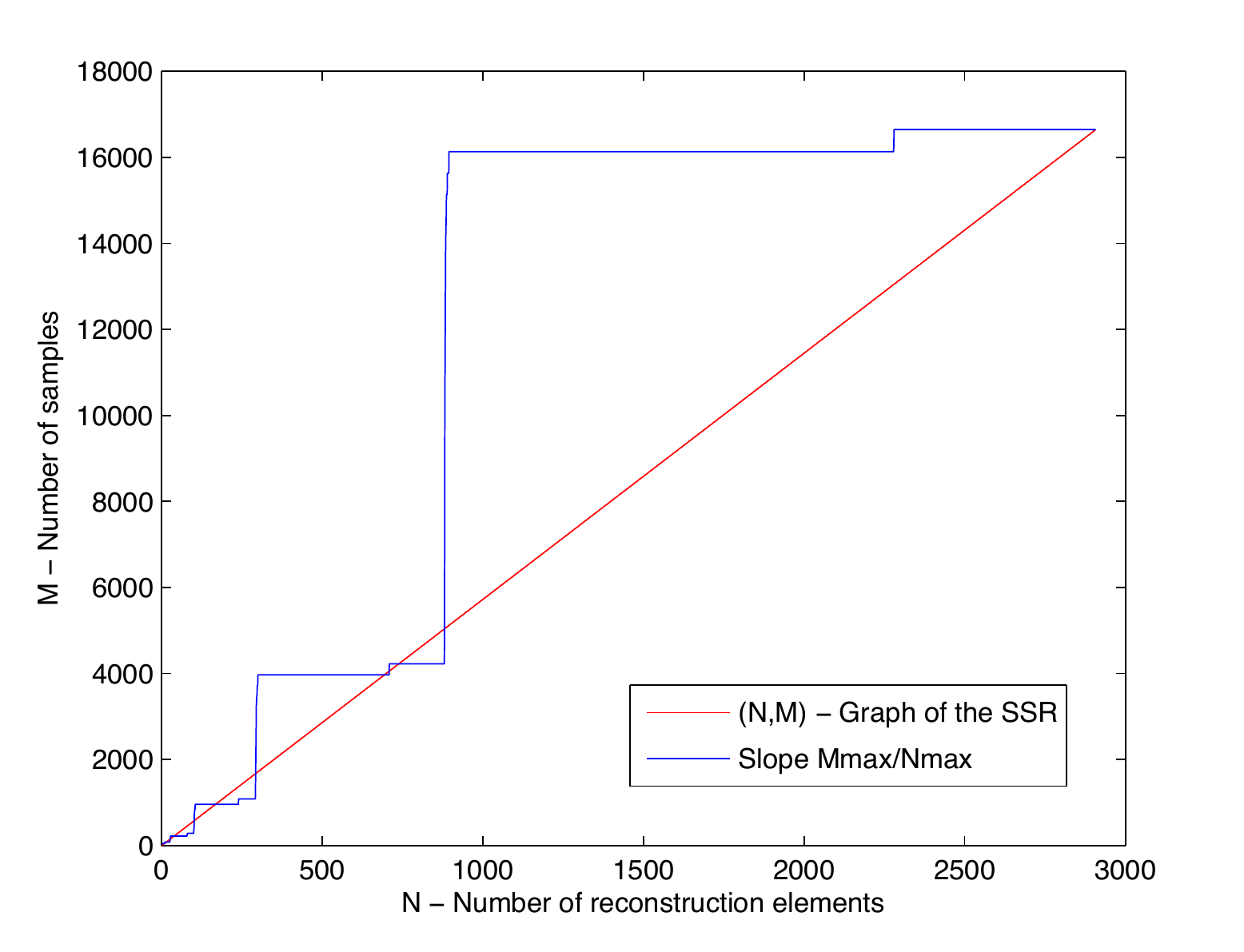}
        \caption{$\Theta(N,\theta)$ for Daubechies-4. Computed for  $J=3$ up to $N = 2908$ with $\varepsilon = 1/8$ and $\theta^{-1} = 0.45$.}\label{db4-2}
        \end{subfigure}
        \caption{Stable sampling rate $\Theta(N,\theta)$ for two dimensional dyadic Haar wavelets and two dimensional dyadic Daubechies-4 wavelets.}\label{plots1}
\end{figure}

\begin{table}
\begin{center}
\begin{tabular}{ll}
\begin{tabular}{@{}|c|c|c|c|}
\hline
\multicolumn{4}{|c|}{Numbers for Haar and  $\ \theta^{-1} = 0.45$}\\
\hline
$J$& $N_J$ & $M$ for $\varepsilon = 1/2$ & $M$ for $\varepsilon = 1/3$\\
\hline
0& 4 & 25 & 25\\
\hline
 1& 16 & 81 & 169\\
\hline
 2& 64 & 289 & 625\\
\hline
3& 256 &  1089 & 2401\\
\hline
4& 1024 & 4225 & 9409\\
\hline
\end{tabular}
&
\begin{tabular}{@{}|c|c|c|c|}
\hline
\multicolumn{4}{|c|}{Numbers for Daubechies and  $\ \theta^{-1} = 0.45$}\\
\hline
$J$& $N_J$ & $M$ for $\varepsilon = 1/7$ & $M$ for $\varepsilon = 1/8$\\
\hline
0& 100 & 225 & 289\\
\hline
 1& 292& 841 & 1089\\
\hline
 2& 880& 3249 & 4225\\
\hline
3& 2908&  12769 & 16641\\
\hline
4& 10408 &  - & -\\
\hline
\end{tabular}
\end{tabular}
\caption{Number of reconstruction elements and samples for Haar and Daubechies-4. Note that these numbers predict the jumps in Figure \ref{plots1}. }\label{tablenumber1}
\end{center}
\end{table}

\section{Numerical Experiments}
\label{sec:numerics}

In this section we numerically demonstrate the linearity of the stable sampling rate as stated in Theorem \ref{maintheoremgeneral}. We will also demonstrate how this combines with generalized sampling in practice. In particular, given this linearity, reconstructing from Fourier samples in smooth boundary wavelets will give an error decaying according to the smoothness and the number of vanishing moments.

In this section we consider dyadic scaling matrices
\begin{align}
    A = \begin{pmatrix} 2 & 0 \\ 0 & 2 \end{pmatrix}. \label{eq:diagonalmatrix}
\end{align}
Furthermore, our focus are \ti{separable wavelets}, i.e. wavelets that are obtained by tensor products of one dimensional scaling functions and one dimensional wavelet functions, respectively. Scaling matrices of the form (\ref{eq:diagonalmatrix}) preserve the separability.

\subsection{Linearity examples with Haar and Daubechies-4 wavelets}
We use the description of Section 3.1 and Example \ref{wavehaarexp} in order to perform numerical experiments for some known wavelets. This gives the reconstruction space $\Rcal = \overline{\spann} \{ \varphi \, : \, \varphi \in \Omega_1 \cup \Omega_2\}$ where $\Omega_1$ and $\Omega_2$ are defined in (\ref{Omega1}) and (\ref{Omega2}) respectively.
We order the reconstruction space $\Rcal$ in the same manner as presented at the end of Section 3. In (\ref{levelnumber}) we counted the number of reconstruction elements up to level $J-1$, which leads to
\begin{align}
N_J = (2^{2J}-1)a^2 + 6a(a-1)(2^J-1) + 3J(a-1)^2 + (2a-1)^2 \label{levelnumber2}
\end{align}
many elements, which is asymptotically of order $2^{2J}$.

We test Haar wavelets and  2D Daubechies-4 wavelets. Figure \ref{plots1} shows the linear behaviour of the stable sampling rate for these two types of wavelet generators. By a small abuse of notation, we also write $M$ for the total number of samples. In our analysis in Section 3 we estimated the angle $\cos(\omega(\Rcal_N, \Scal_M^{(\varepsilon)}))$ (recall that $\epsilon$ is the sampling rate) with respect to some fixed $\theta >1$. In fact we computed $M$ such that
\begin{align*}
    \inf \limits_{\substack{ \varphi \in \Rcal_N \\ \| \varphi\| =1}} \| P_{\Scal_M^{(\varepsilon)}} \varphi \| \geq \frac{1}{\theta}
\end{align*}
holds.
We proved that $M$ is up to a constant of the same size as $N_J$.  Figure \ref{plots1} shows the stable sampling rate (in blue)
\begin{align*}
    \Theta(N,\theta) = \min \left\{ M \in \N, \cos(\omega(\Rcal_N,\Scal_M)) \geq \frac{1}{\theta}\right\}
\end{align*}
and the linear function $f$ (in red) given by
\begin{align*}
    f(N) =  \frac{M_{\text{max}}}{N_{\text{max}}}N,
\end{align*}
where $N_{\text{max}}$ is the maximum value of $N$ used in the experiment and $M_{\text{max}} =  \Theta(N_{\text{max}},\theta)$.
We computed the stable sampling rate up to level $J = 4$. Note the (significant) jumps of the stable sampling rate occur whenever $N\in \N$ crosses the scaling level $N_J, J = 0, \ldots, 4$.  In the Haar case these are
\begin{align*}
    N_0 = 4, \qquad N_1 = 16,  \qquad N_2 = 64, \qquad N_3 = 256, \quad N_4 = 1024,
\end{align*}
see (\ref{levelnumber2}). Note that $a=1$ in the Haar case. In particular, the jumps are linear, suggesting a linear stable sampling rate. Figure 1 (b), (c), and (d) are interpreted similarly. However, the theoretical results are asymptotic results. Therefore, it should not be surprising that the stable sampling rate is below the linear line in some cases. It aligns asymptotically.

\subsection{Fourier samples and boundary wavelet reconstruction}
\begin{figure}
\begin{center}
\includegraphics[width=0.29\linewidth]{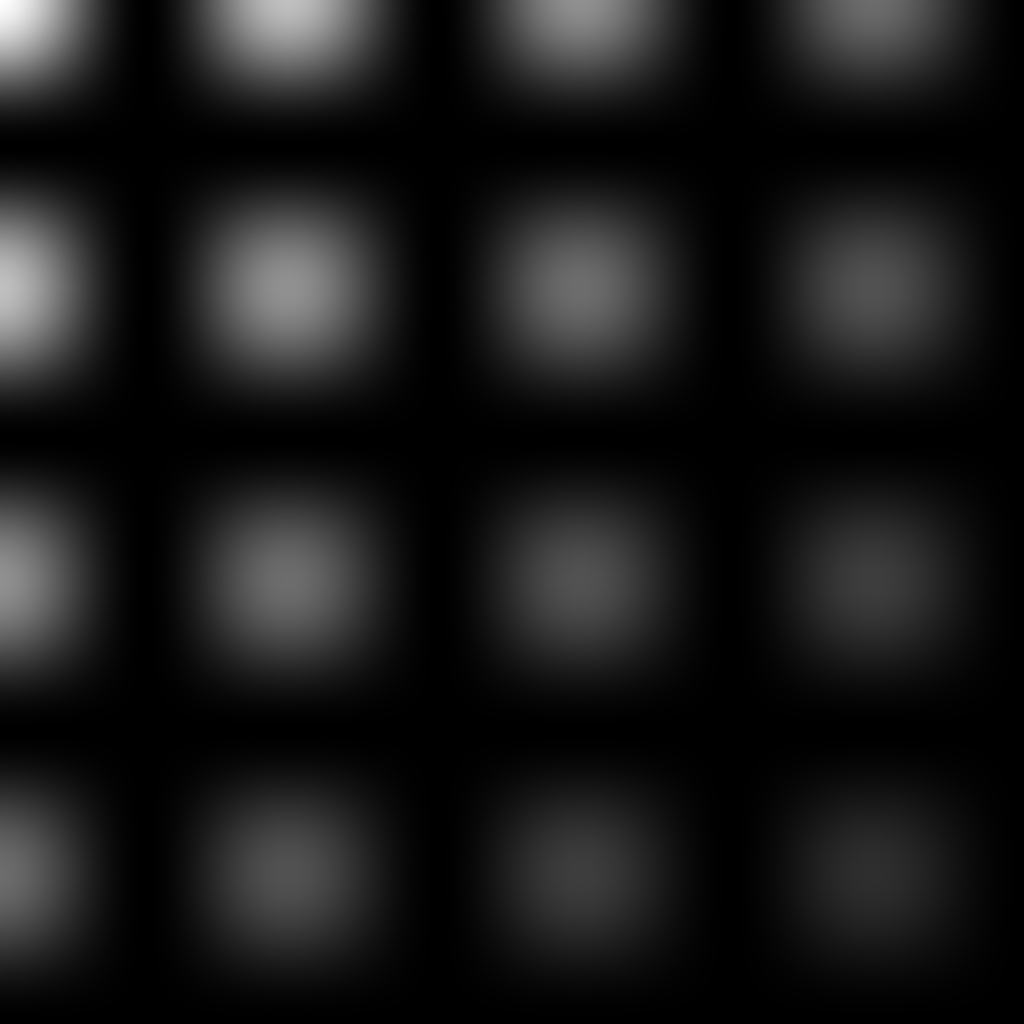}~~
\includegraphics[width=0.29\linewidth]{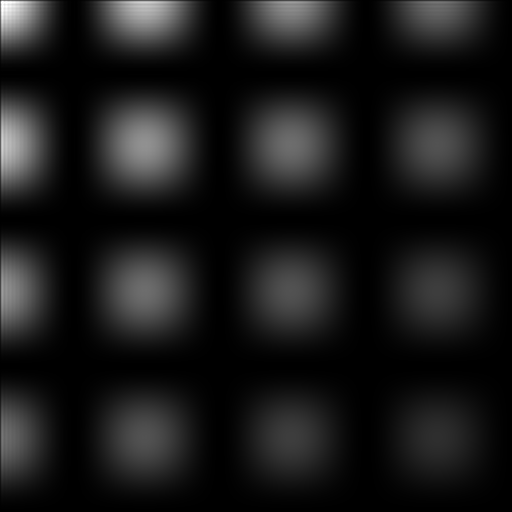}~~
\includegraphics[width=0.29\linewidth]{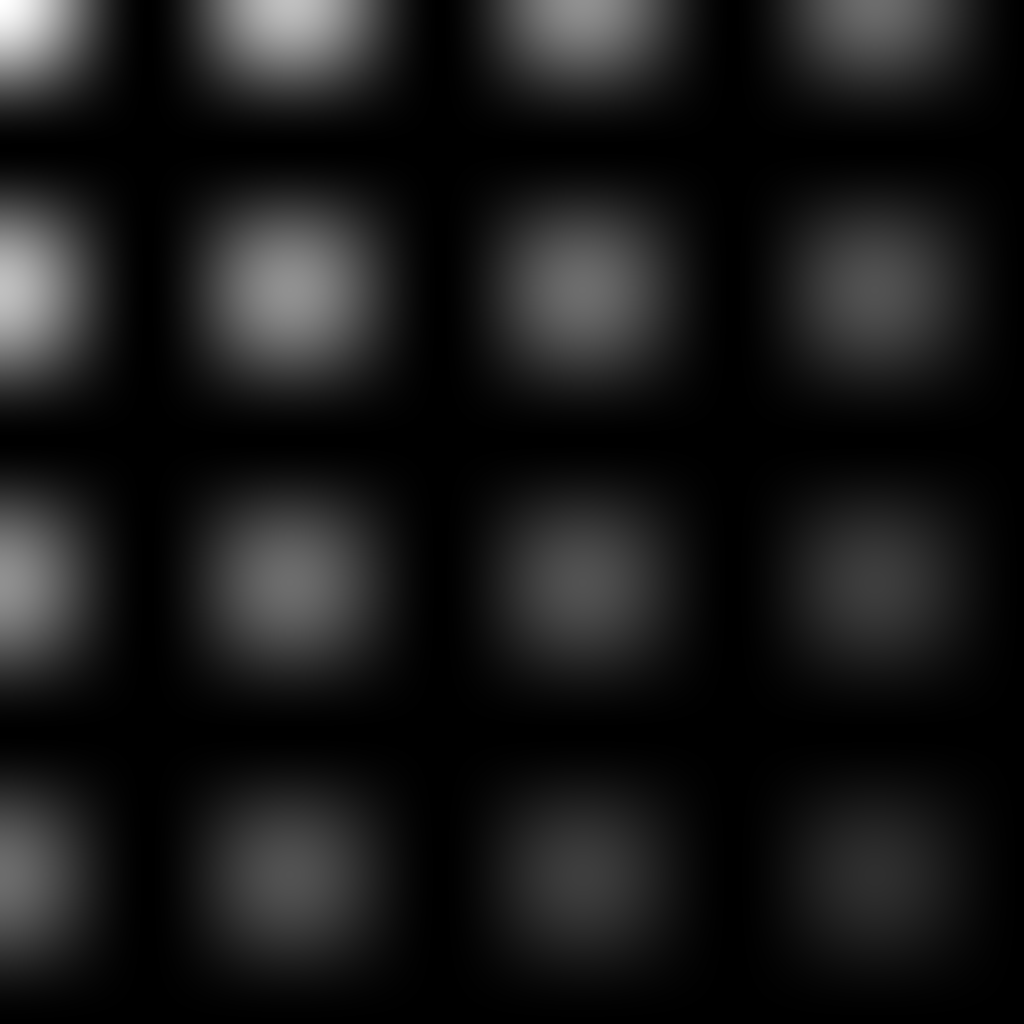}\\[8pt]
\includegraphics[width=0.29\linewidth]{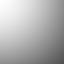}~~
\includegraphics[width=0.29\linewidth]{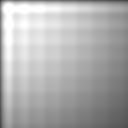}~~
\includegraphics[width=0.29\linewidth]{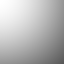}
\caption{Reconstruction of the function $f_1(x,y)=\cos^2(x)\exp(-y)$.
The second row shows an 8 times zoomed-in version of the upper left corner.
\textit{Left}: original function. \textit{Middle}: truncated Fourier series
with $256^2$ Fourier coefficients. \textit{Right}: generalized sampling with
Daubechies-3  wavelets computed from the same Fourier coefficients.}
\label{smooth2D_GS}
\end{center}
\end{figure}

\begin{figure}
\begin{center}
\includegraphics[width=0.495\linewidth]{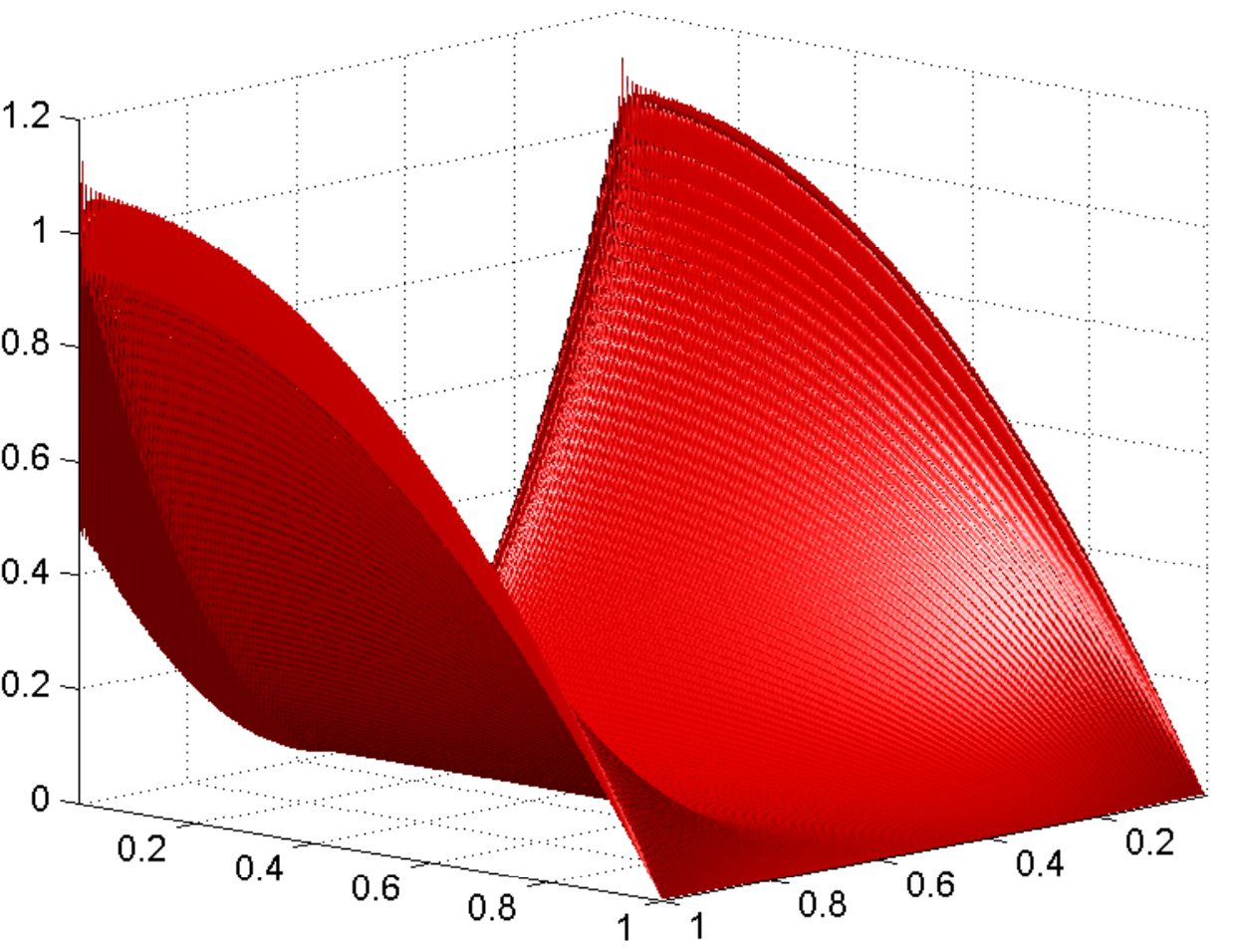}
\includegraphics[width=0.495\linewidth]{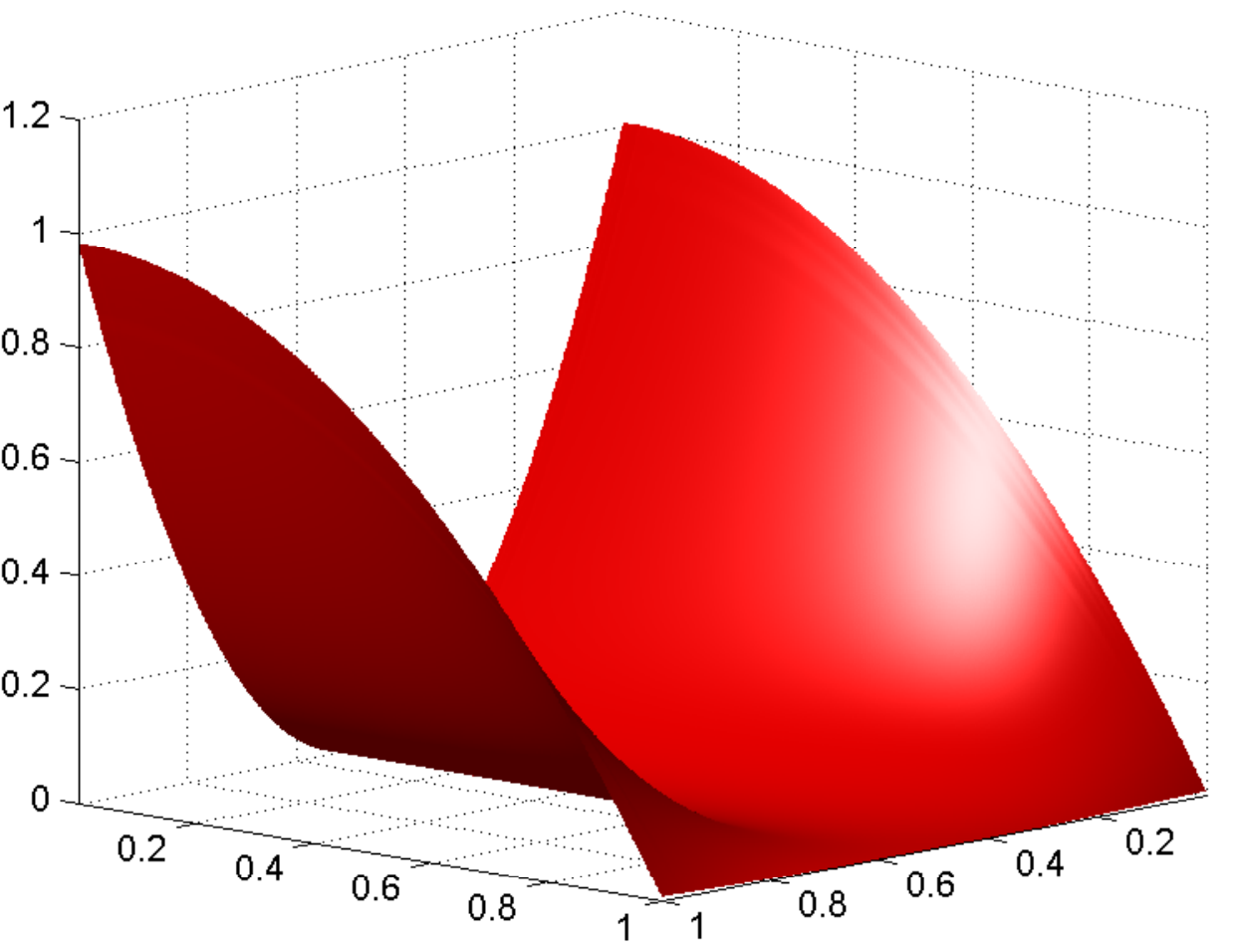}\\[5pt]
\includegraphics[width=0.495\linewidth]{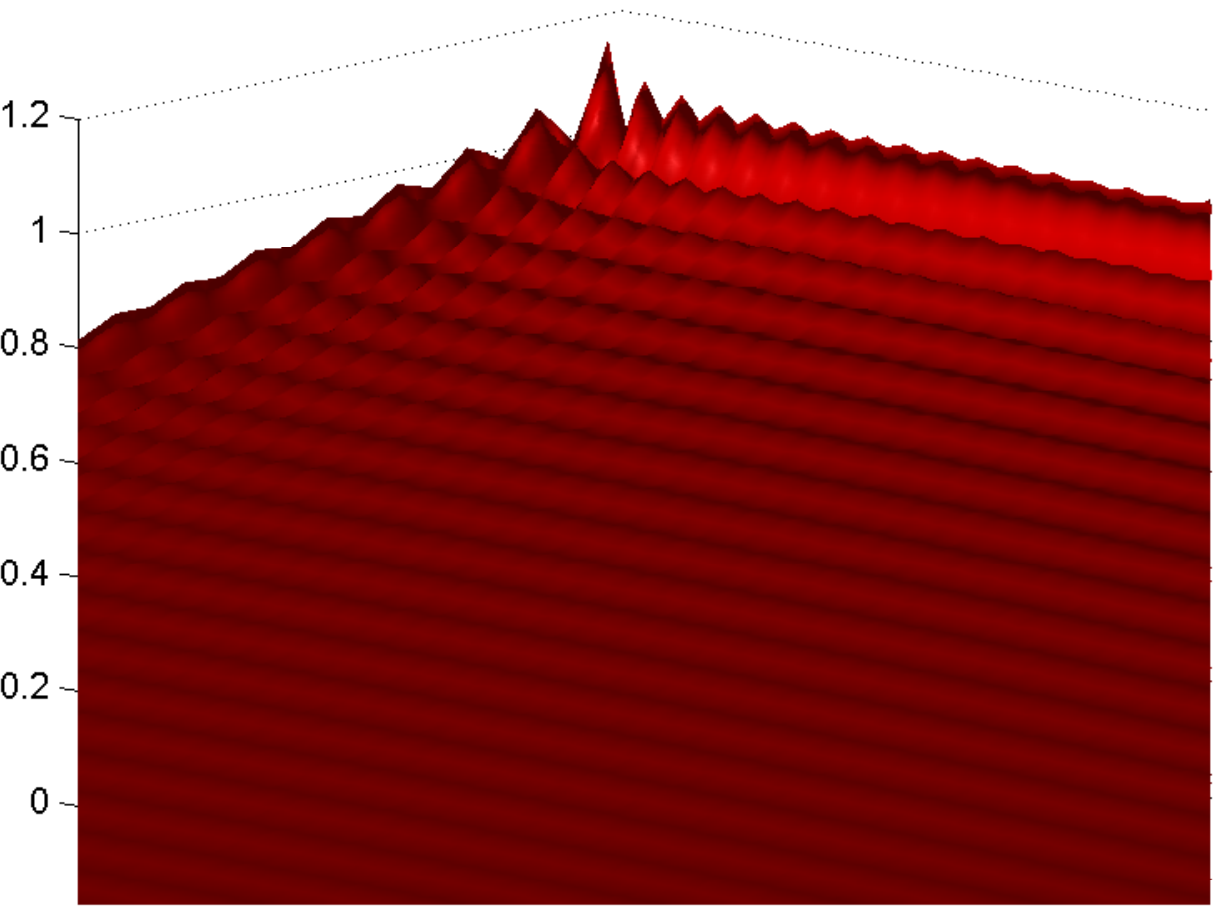}
\includegraphics[width=0.495\linewidth]{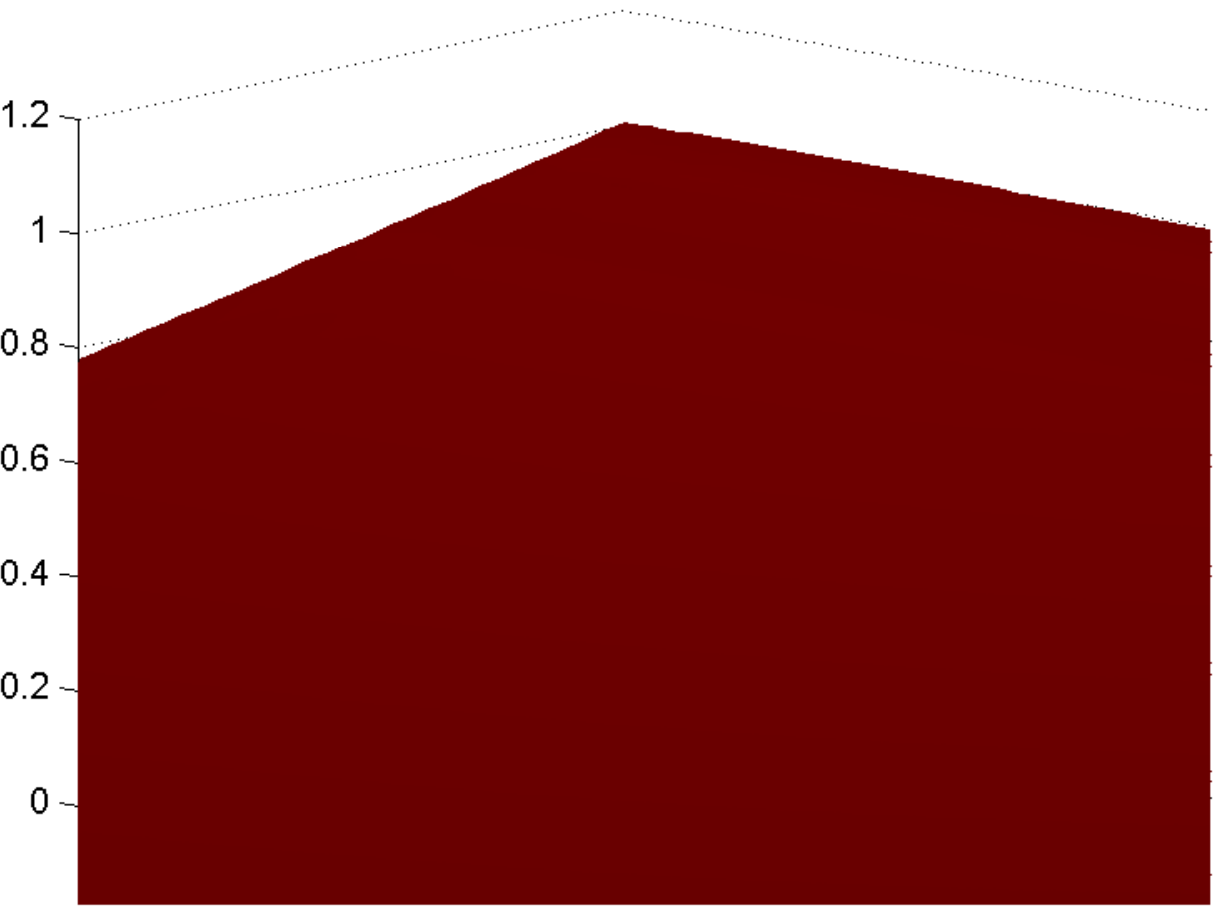}\\[5pt]
\includegraphics[width=0.495\linewidth]{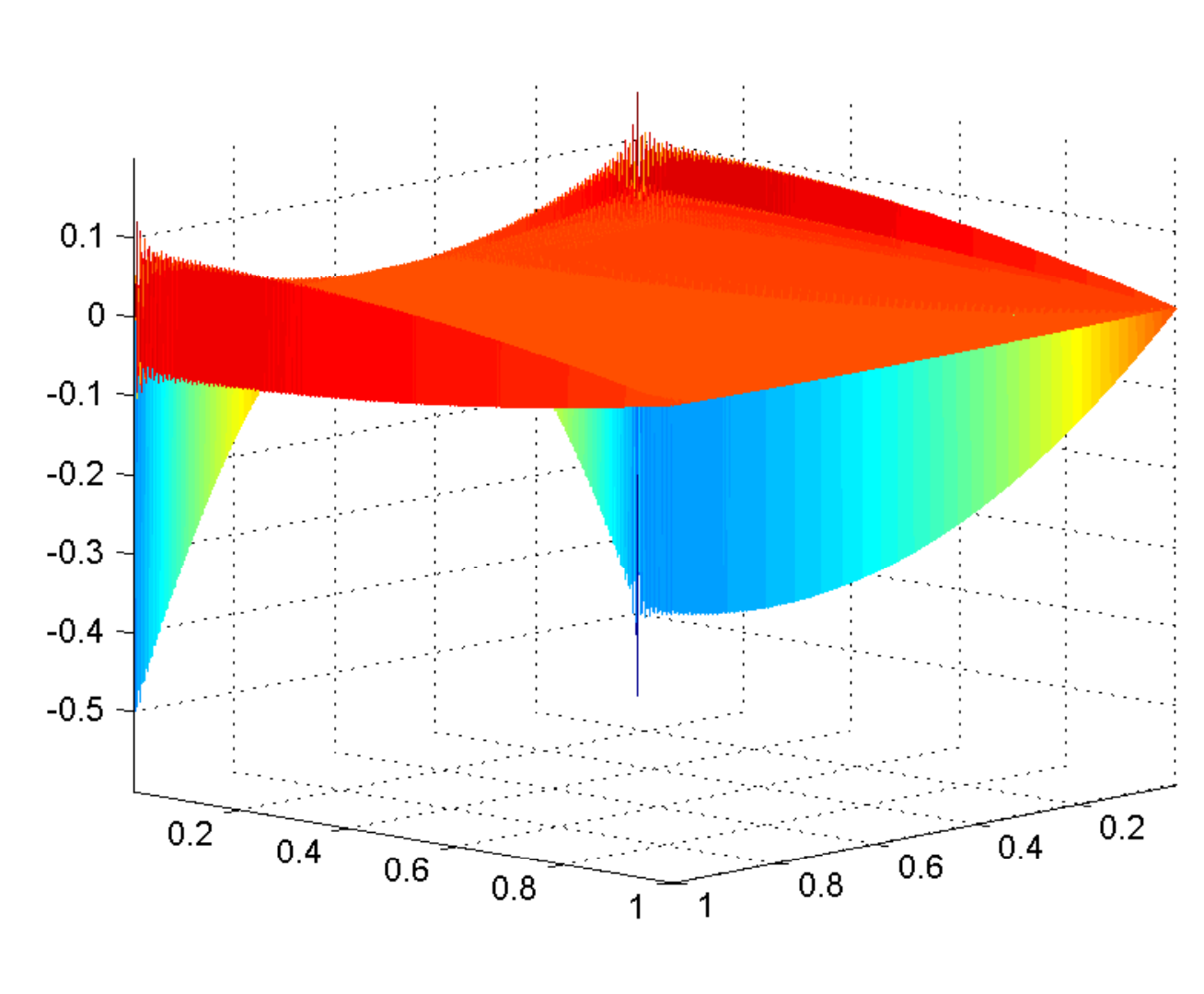}
\includegraphics[width=0.495\linewidth]{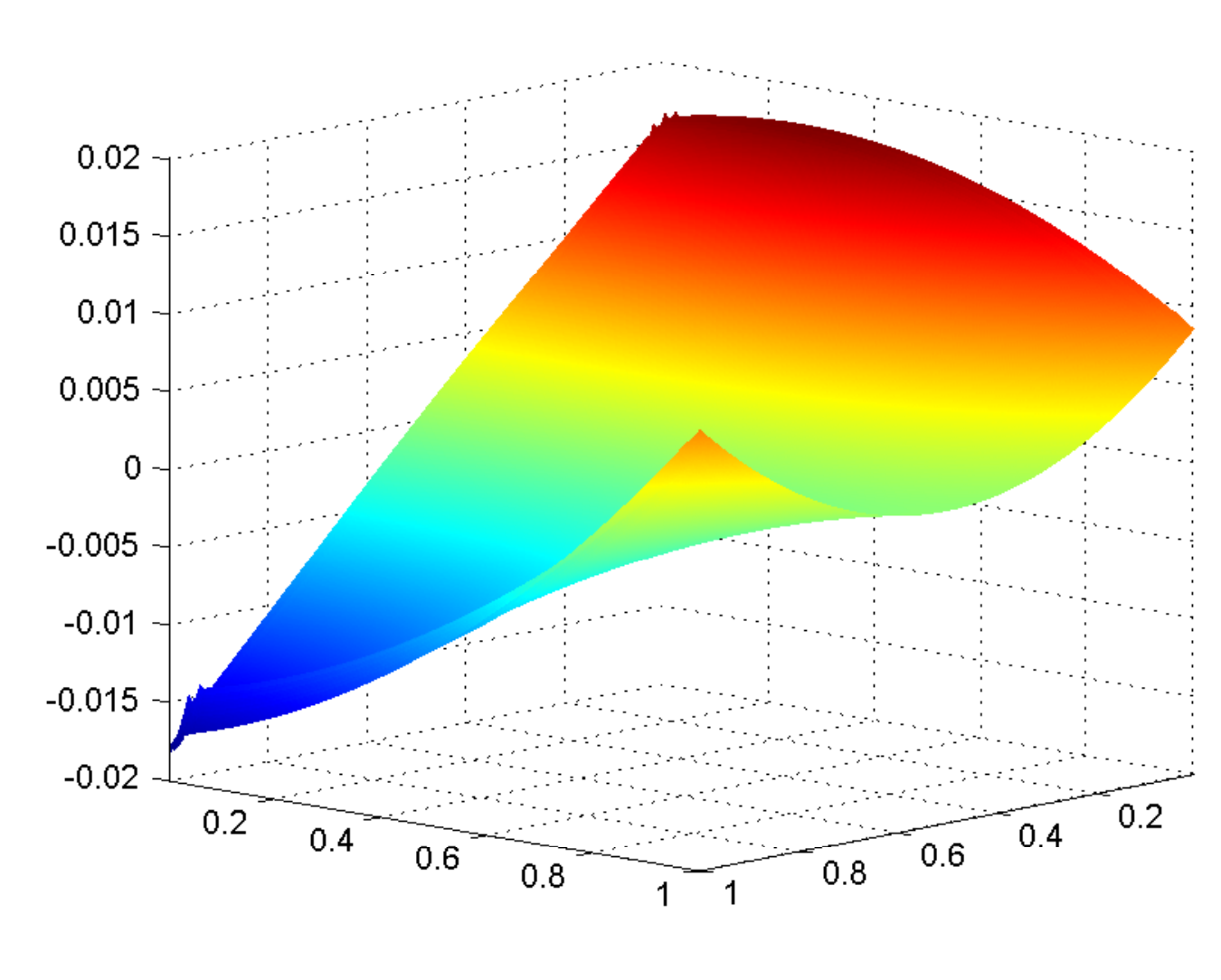}
\caption{Reconstruction of the function $f_2(x,y) = (1+x^2)(2y-1^2)$. \textit{Upper left}:
truncated Fourier series with $512^2$ Fourier coefficients. \textit{Middle
left}: 8 times zoomed-in version of the upper figure. \textit{Lower left}:
error committed by the truncated Fourier series.  \textit{Upper right}:
generalized sampling with Daubechies-3 wavelets computed from the same $512^2$ Fourier
coefficients. \textit{Middle right}: 8 times zoomed-in version of the upper
figure. \textit{Lower right}: error committed by generalized sampling.}
\label{smooth2D_GS_xy}
\end{center}
\end{figure}

In this example we will demonstrate the efficiency of generalized sampling given the established linearity of the stable sampling rate. In particular, suppose that $f$ is a function we want to recover from its Fourier information. It is smooth, however, not periodic -- a problem that occurs for example in electron microscopy and also in MRI. This causes the classical Fourier reconstruction to converge slowly, yet a smooth boundary wavelet basis will give much faster convergence (see \cite[Ch.\ 9]{Mallat}).  As discussed, the issue is that we are given Fourier samples, not wavelet coefficients.  However, this is not a problem in view of the linearity of the stable sampling rate. In particular, if $f \in W^s(0,1)$, where $\mathrm{W}^s(0,1)$ denotes the usual Sobolev space, and $P_{\Rcal_N}$ denotes the projection onto the space $\Rcal_{N}$ of the first $N$ boundary wavelets (see (\ref{eq:rec_space_b_wavelets})), then
$$
 \| f - P_{\Rcal_N} f \| = \mathcal{O}(N^{-s}),\quad N \rightarrow \infty,
$$
provided that the wavelet has sufficiently many vanishing moments.
Now, if $G_{N,M}(f) \in \Rcal_{N}$ is the generalized sampling solution from Definition \ref{generalized_sampling} given $M$ Fourier coefficients, and $M$ is
chosen according to the stable sampling rate then
$$
\| f - G_{N,M}(f)\| = \mathcal{O}(N^{-s}) = \mathcal{O}(M^{-s}),\quad N \rightarrow \infty.
$$
Hence, we obtain the same convergence rate up to a constant, by simply postprocessing the given samples.

To illustrate this advantage we will consider the following two functions:
$$
f_1(x,y)=\cos^2(x)\exp(-y), \qquad f_2(x,y) = (1+x^2)(2y-1^2).
$$
In Figure \ref{smooth2D_GS} we have shown the results for $f_1$ and compared the classical Fourier reconstruction with the generalized sampling reconstruction. Both examples use exactly the same samples, however, note the pleasant absence of the Gibb's ringing in the generalized sampling case. The same experiment is carried out for $f_2$ in Figure
\ref{smooth2D_GS_xy}, however, here we have displayed the reconstructions in $3D$ in order to visualize the error.


\section{Proof of Theorem \ref{maintheoremgeneral}}
\label{sec:proofWavelets}

The proof of Theorem \ref{maintheoremgeneral} is somewhat technical, wherefore we divide the proof into several steps.
First, in Subsection \ref{subsec:wavelets_structure_proof}, the overall structure of the proof is presented, and
the respective details can then be found in Subsection \ref{subsec:wavelets_structure_proof}.


\subsection{Structure of the Proof}
\label{subsec:wavelets_structure_proof}

Let $\varepsilon \in (0,1/(T_1+T_2)]$ and $\theta >1$. Then we have to prove that
\beq \label{eq:mainproof1_claim}
    \inf \limits_{\substack{ \varphi \in \Rcal_{N_J} \\ \|\varphi\| =1}} \| P_{\Scal_M^{(\varepsilon)}} \varphi \| \geq \frac{1}{\theta},
\eeq
for
\[
M=(M_1,M_2) = \left(\frac{ \lambda_1^{(J)} +  \lambda_3^{(J)}}{\varepsilon} S^{(\theta)}, \frac{ \lambda_2^{(J)} +  \lambda_4^{(J)}}{\varepsilon} S^{(\theta)}\right),
\]
and some $S^{(\theta)}$ independent of $J$. To this end, let $\varphi \in \Rcal_N$ be such that $\| \varphi \|=1$. Since the sampling functions
form an orthonormal basis of $\Scal_M^{(\varepsilon)}$, we obtain
\beq \label{eq:mainproof1_Pphi}
\| P_{\Scal_M^{(\varepsilon)}} \varphi \|^2= \sum \limits_{l_1 = -M_1}^{M_1} \sum \limits_{l_2 = -M_2}^{M_2}
|\langle \varphi , s_l^{(\varepsilon)} \rangle|^2, \quad l = (l_1,l_2).
\eeq
 By Lemma \ref{lemma:V_0plusW_j}, which relies mainly on the underlying MRA structure, we can write $\varphi$ in terms
 of scaling functions at highest scale, i.e.,
\[
    \varphi =  \sum \limits_{l_1 = L_3}^{L_1} \sum \limits_{l_2 = L_4}^{L_2} \alpha_{l_1,l_2}\phi_{J,(l_1,l_2)},
\]
for some $L_1,L_2,L_3,L_4 \in \Z$.
Lemma \ref{lemma:phis_l}, which is proven by direct computations, then shows that
\[
    \langle \varphi, s_l^{(\varepsilon)} \rangle = \varepsilon |\det A|^{-J/2} \Phi (\varepsilon(A^{-J})^Tl) \phihat(\varepsilon(A^{-J})^Tl),
\]
where $\Phi$ is a trigonometric polynomial of the form
\[
    \Phi(z) =  \sum \limits_{m_1 = L_3}^{L_1} \sum \limits_{m_2 = L_4}^{L_2} \alpha_{m_1,m_2} e^{-2\pi i  \langle z, m \rangle}, \quad z \in \R^2, m = (m_1,m_2).
\]
Using \eqref{eq:mainproof1_Pphi}, we conclude that
\[
\|P_{\Scal_M^{(\varepsilon)}} \varphi \|^2= \sum \limits_{l_1 = -M_1}^{M_1} \sum \limits_{l_2 = -M_2}^{M_2} \varepsilon^2 |\det A|^{-J}
|\Phi (\varepsilon(A^{-J})^Tl)|^2 |\phihat(\varepsilon(A^{-J})^Tl)|^2, \quad l = (l_1, l_2).
\]
By Theorem \ref{maintheoremdelta}, there exist some $\varepsilon_0 > 0$ and $S^{(\theta)} = \left(S^{(\theta)}_1,S^{(\theta)}_2\right)\in \N \times \N$
that does not depend on $J$ such that
\[
    \sum \limits_{l_1 = -M_1}^{M_1} \sum \limits_{l_2 = -M_2}^{M_2} \varepsilon_0 ^2 | \det A^{-J}| | \Phi(\varepsilon_0 (A^{-J})^T l) |^2
    | \phihat(\varepsilon_0(A^{-J})^Tl)|^2 \geq  \frac{\|\Phi\|^2}{\theta^2}
\]
for
\[
 M= \frac{1}{\varepsilon_0} (A^J)^T \widetilde{S}^{(\theta)}=
 \begin{pmatrix} \frac{ \lambda_1^{(J)} + \lambda_3^{(J)}}{\varepsilon_0} S^{(\theta)}_1 \\
 \frac{ \lambda_2^{(J)} +  \lambda_4^{(J)}}{\varepsilon_0} S^{(\theta)}_2 \end{pmatrix}.
 \]
Since
\[
\| \Phi \|^2 =\sum \limits_{m_1 = L_3}^{L_1} \sum \limits_{m_2 = -L_4}^{L_2} |\beta_{m_1,m_2}|^2 =\| \varphi\|^2 = 1,
\]
we obtain
\[
\| P_{\Scal_M^{(\varepsilon_0)}} \varphi \|^2 \geq \frac{1}{\theta^2}.
\]
Finally, Lemma \ref{lemma:oneepsenough} implies that a change of $\varepsilon$ only changes the constant, showing that
\eqref{eq:mainproof1_claim} is true for any $\varepsilon \in (0,1/(T_1+T_2)]$, thereby proving Theorem
\ref{maintheoremgeneral}.


\subsection{Auxiliary results}\label{subsec:wavelets_proofs}

In this section we will prove the results mentioned in Subsection \ref{subsec:wavelets_structure_proof}, which
are required for completing the proof of Theorem \ref{maintheoremgeneral}. The following result is an extension of a result from \cite{AHP2} to the two dimensional setting.

\begin{lem}\label{lemma:V_0plusW_j}
Let $J \in \N$ and $\varphi \in V_0^{(a)} \oplus W_0^{(a)} \oplus \ldots \oplus W_{J-1}^{(a)}$. Then there exist
$L_1,L_2,L_3,L_4 \in \Z$ dependent of $J$ such that
\[
    \varphi =  \sum \limits_{l_1 = L_3}^{L_1} \sum \limits_{l_2 = L_4}^{L_2} \alpha_{l_1,l_2}\phi_{J,(l_1,l_2)}.
\]
\end{lem}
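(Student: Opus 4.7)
The plan is to combine the multiresolution structure with a compact-support argument. First, since $(V_j)_{j \in \Z}$ is an MRA with $V_{j+1} = V_j \oplus W_j$, one has by induction on $J$ the identity $V_0 \oplus W_0 \oplus \ldots \oplus W_{J-1} = V_J$ in $L^2(\R^2)$. The truncated spaces satisfy $V_0^{(a)} \subseteq V_0$ and $W_j^{(a)} \subseteq W_j$ for each $j$, so any $\varphi$ in the direct sum $V_0^{(a)} \oplus W_0^{(a)} \oplus \ldots \oplus W_{J-1}^{(a)}$ lies in $V_J$. Since $\{\phi_{J,l}\}_{l \in \Z^2}$ is an orthonormal basis of $V_J$, this gives an a priori decomposition
\[
\varphi = \sum_{l \in \Z^2} \alpha_l \, \phi_{J,l}, \qquad \alpha_l = \langle \varphi, \phi_{J,l}\rangle,
\]
as an $L^2$-convergent series. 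The remaining task is thus to show that only finitely many $\alpha_l$ are non-zero and to control the range explicitly.

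The second step is a support computation. Each of the truncated spaces $V_0^{(a)}$ and $W_j^{(a)}$ is, by definition, the span of finitely many compactly supported functions (those generators in $\Omega_1$ and $\Omega_2$ with indices at scale $\leq J-1$), so $\varphi$ is itself compactly supported. More precisely, $\phi_{0,m}$ has support in $[0,a]^2 + m$, and $\psi^p_{j,m}$ has support in $A^{-j}([0,a]^2 + m)$, hence $\suppp \varphi$ is contained in a bounded set $K \subset \R^2$ that is determined by the largest translation indices appearing in $V_0^{(a)}, W_0^{(a)}, \ldots, W_{J-1}^{(a)}$ and by the scaling matrix $A$. Since $\suppp \phi_{J,l} = A^{-J}([0,a]^2 + l)$, the orthogonality-and-support relation
\[
\alpha_l = \langle \varphi, \phi_{J,l}\rangle = 0 \quad \text{whenever } K \cap A^{-J}([0,a]^2 + l) = \emptyset
\]
reduces the sum to finitely many $l$.

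The third step is to turn the set $\{l \in \Z^2 : A^{-J}([0,a]^2 + l) \cap K \neq \emptyset\}$ into explicit bounds $L_3 \leq l_1 \leq L_1$, $L_4 \leq l_2 \leq L_2$. Equivalently, one needs the integer lattice points $l$ for which $([0,a]^2 + l) \cap A^J K \neq \emptyset$. Since each index pair $(m_1,m_2)$ from $\Omega_1 \cup \Omega_2^{(J)}$ satisfies the explicit coarse bounds appearing in the definitions of $V_0^{(a)}$ and $W_j^{(a)}$, applying $A^j$ to the corresponding support cells and then $A^{J-j}$ up to the finest scale yields coordinate bounds in terms of the entries of $A^J$, namely of $\lambda_1^{(J)} + \lambda_2^{(J)}$ and $\lambda_3^{(J)} + \lambda_4^{(J)}$, plus constants depending only on $a$. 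The step I expect to be most delicate is keeping the bounds $L_1,L_2,L_3,L_4$ sharp enough that the mesh-norm assumption (\ref{assumption}) of the main theorem still holds; for the diagonal case $A = \diag(2,2)$, tracking the worst-case support in each coordinate recovers exactly the values quoted in Example \ref{wavehaarexp}, and the general argument runs along the same lines after accounting for the shear introduced by the off-diagonal entries of $A$.
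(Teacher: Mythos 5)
Your proposal is correct and follows essentially the same route as the paper: use the MRA nesting to place $\varphi$ in $V_J$, expand in the orthonormal basis $\{\phi_{J,l}\}_{l\in\Z^2}$, and then use the compact supports of $\varphi$ and $\phi_{J,l}$ to show that only finitely many coefficients in a rectangular index range survive. The only cosmetic difference is that the paper derives the explicit bounds $L_1,\ldots,L_4$ generator by generator (computing when $\langle \phi_{0,m},\phi_{J,l}\rangle$ and $\langle \psi^p_{j,m},\phi_{J,l}\rangle$ can be non-zero, with separate cases for the sign of $\det A^j$, then taking min/max over $j$), whereas you argue via the overall support of $\varphi$; this is the same computation packaged differently.
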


\begin{proof}
Let $\varphi \in V_0^{(a)} \oplus W_0^{(a)} \oplus \ldots \oplus W_{J-1}^{(a)}$. Then $\varphi$ has an expansion of the following form
\beq \label{eq:proof1_V0W0}
    \varphi = \sum _{m_1 = - a +1}^{m_1 = a-1} \sum _{m_2 = - a +1}^{m_2 = a-1} \alpha_{m_1,m_2} \phi_{0,(m_1,m_2)} +
    \sum _{p=1}^{|\det A| -1} \sum_{j=0}^{J-1} \sum_{m_1 = -a +1}^{a(\lambda_1^{j}+ \lambda_{2}^{j})-1}
    \sum \limits_{m_2= -a +1}^{a(\lambda_3^{j}+ \lambda_{4}^{j})-1} \beta_{j,(m_1,m_2)}^p \psi_{j,(m_1,m_2)}^p.
\eeq
Since  $V_0^{(a)} \subset V_0$ and $W_j^{(a)} \subset W_{j}$ for  $j \in \N$ and the sequence $(V_j)_{j\in\Z}$ forms an MRA,
it follows that
\[
V_0^{(a)} \oplus W_0^{(a)} \oplus \ldots \oplus W_{J-1}^{(a)} \subset V_J.
\]
Since an orthonormal basis for $V_J$ is given by the functions $\{ \phi_{J,m} \, : \, m \in \Z^2\}$, for each $|m_i| < a, i =1,2$, we have
\[
    \phi_{0,(m_1,m_2)} = \sum _{l \in \Z^2} \langle \phi_{0,(m_1,m_2)}, \phi_{J,l} \rangle \phi_{J,l}
\]
Moreover, since $\phi_{0,(m_1,m_2)}$ and $\phi_{J,l}$ are compactly supported, we obtain
\[
    \langle \phi_{0,(m_1,m_2)}, \phi_{J,l} \rangle \neq 0,
\]
 if
\[
 -a+   (-a+1)( \lambda_1^{(J)}+ \lambda_2^{(J)} ) \leq l_1 \leq (2a-1)(\lambda_1^{(J)} + \lambda_2^{(J)})
\]
and
\[
 -a+   (-a+1)( \lambda_3^{(J)}+ \lambda_4^{(J)} ) \leq l_2 \leq (2a-1)(\lambda_3^{(J)} + \lambda_4^{(J)}).
\]
This follows by a straightforward computation from the support conditions of $\phi_{0,(m_1,m_2)}$ and $\phi_{J,l}$ together with
 $|m_i| < a, i =1,2$.
Similarly, we have  $\langle \psi_{j,(m_1,m_2)}^p, \phi_{J,(l_1,l_2)} \rangle \neq 0$ if

\textbf{Case \textrm{I}:} $\det A^j \geq 0$
 \begin{align*}
-a&  - \lambda_1^{(J)}\frac{(-a+1)\lambda_4^{(j)}+\lambda_2^{(j)}(a(\lambda_3^{(j)}+\lambda_4^{(j)})-1)}{\det A^j} + \lambda_2^{(J)} \frac{(-a+1)\lambda_1^{(j)}+(a(\lambda_1^{(j)}+\lambda_2^{(j)})-1)\lambda_3^{(j)}}{\det A^j} \\
& < l_1 < \lambda_1^{(J)} \frac{a(\lambda_4^{(j)}-\lambda_2^{(j)})+(a(\lambda_1^{(j)}+\lambda_2^{(j)})-1)\lambda_4^{(j)}+(a-1)\lambda_2^{(j)}}{\det A^j}+\\&\qquad +\lambda_2^{(J)} \frac{a(\lambda_1^{(j)}-\lambda_3^{(j)})+(a(\lambda_3^{(j)}+\lambda_4^{(j)})-1)\lambda_1^{(j)}+(a-1)\lambda_3^{(j)}}{\det A^j}
 \end{align*}
 and
  \begin{align*}
-a&  - \lambda_3^{(J)}\frac{(-a+1)\lambda_4^{(j)}+\lambda_2^{(j)}(a(\lambda_3^{(j)}+\lambda_4^{(j)})-1)}{\det A^j} + \lambda_4^{(J)} \frac{(-a+1)\lambda_1^{(j)}+(a(\lambda_1^{(j)}+\lambda_2^{(j)})-1)\lambda_3^{(j)}}{\det A^j} \\
& < l_2 < \lambda_3^{(J)} \frac{a(\lambda_4^{(j)}-\lambda_2^{(j)})+(a(\lambda_1^{(j)}+\lambda_2^{(j)})-1)\lambda_4^{(j)}+(a-1)\lambda_2^{(j)}}{\det A^j}+\\&\qquad +\lambda_4^{(J)} \frac{a(\lambda_1^{(j)}-\lambda_3^{(j)})+(a(\lambda_3^{(j)}+\lambda_4^{(j)})-1)\lambda_1^{(j)}+(a-1)\lambda_3^{(j)}}{\det A^j}.
 \end{align*}

\textbf{Case \textrm{II}:} $\det A^j < 0$
 \begin{align*}
-a&  - \lambda_1^{(J)}\frac{(-a+1)\lambda_4^{(j)}+\lambda_2^{(j)}(a(\lambda_3^{(j)}+\lambda_4^{(j)})-1)}{\det A^j} + \lambda_2^{(J)} \frac{(-a+1)\lambda_1^{(j)}+(a(\lambda_1^{(j)}+\lambda_2^{(j)})-1)\lambda_3^{(j)}}{\det A^j} \\
& > l_1 > \lambda_1^{(J)} \frac{a(\lambda_4^{(j)}-\lambda_2^{(j)})+(a(\lambda_1^{(j)}+\lambda_2^{(j)})-1)\lambda_4^{(j)}+(a-1)\lambda_2^{(j)}}{\det A^j}+\\&\qquad +\lambda_2^{(J)} \frac{a(\lambda_1^{(j)}-\lambda_3^{(j)})+(a(\lambda_3^{(j)}+\lambda_4^{(j)})-1)\lambda_1^{(j)}+(a-1)\lambda_3^{(j)}}{\det A^j}
 \end{align*}
 and
  \begin{align*}
-a&  - \lambda_3^{(J)}\frac{(-a+1)\lambda_4^{(j)}+\lambda_2^{(j)}(a(\lambda_3^{(j)}+\lambda_4^{(j)})-1)}{\det A^j} + \lambda_4^{(J)} \frac{(-a+1)\lambda_1^{(j)}+(a(\lambda_1^{(j)}+\lambda_2^{(j)})-1)\lambda_3^{(j)}}{\det A^j} \\
& > l_2 > \lambda_3^{(J)} \frac{a(\lambda_4^{(j)}-\lambda_2^{(j)})+(a(\lambda_1^{(j)}+\lambda_2^{(j)})-1)\lambda_4^{(j)}+(a-1)\lambda_2^{(j)}}{\det A^j}+\\&\qquad +\lambda_4^{(J)} \frac{a(\lambda_1^{(j)}-\lambda_3^{(j)})+(a(\lambda_3^{(j)}+\lambda_4^{(j)})-1)\lambda_1^{(j)}+(a-1)\lambda_3^{(j)}}{\det A^j}.
\end{align*}
 Minimizing the lower bounds with respect to $j \in \{ 0, \ldots, J-1\}$ and maximizing the upper bounds with respect to $j$, respectively, yields the claim.
 \end{proof}

The following lemma is well known (see \cite{Mallat}).
\begin{lem}\label{lemma:orthosumone}
Let $f \in L^2(\R^2)$. Then $\{ f( \cdot - m) \, : \, m \in \Z^2\}$ is an orthonormal system if and only if
\begin{align*}
	\sum \limits_{m \in \Z^2} | \fhat(\xi +  m )|^2 = 1 \quad \text{for almost every } \xi \in \R^2.
\end{align*}
\end{lem}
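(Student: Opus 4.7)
The plan is to reduce the orthonormality condition on the translates $\{f(\cdot - m) : m \in \Z^2\}$ to a statement about the Fourier coefficients of the periodization $F(\xi) := \sum_{m \in \Z^2} |\hat f(\xi + m)|^2$, and then invoke uniqueness of Fourier series.

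First, I would exploit the translation invariance of the inner product to write
\[
  \langle f(\cdot - m), f(\cdot - n)\rangle = \langle f, f(\cdot - (n-m))\rangle,
\]
so the orthonormality of the system is equivalent to $\langle f, f(\cdot - k)\rangle = \delta_{k,0}$ for every $k \in \Z^2$. Next, using $\widehat{f(\cdot - k)}(\xi) = e^{-2\pi i \langle k,\xi\rangle}\hat f(\xi)$ and Plancherel's theorem, I would rewrite this inner product as
\[
  \langle f, f(\cdot - k)\rangle = \int_{\R^2} |\hat f(\xi)|^2 e^{2\pi i \langle k, \xi\rangle}\, d\xi.
\]

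The key step is then to periodize the integrand by partitioning $\R^2$ into translates of $[0,1]^2$:
\[
  \int_{\R^2} |\hat f(\xi)|^2 e^{2\pi i \langle k,\xi\rangle}\, d\xi
   = \sum_{m \in \Z^2} \int_{[0,1]^2} |\hat f(\xi + m)|^2 e^{2\pi i \langle k, \xi + m\rangle}\, d\xi.
\]
Since $e^{2\pi i \langle k, m\rangle} = 1$ for $k,m \in \Z^2$, Fubini (justified as soon as $f \in L^2(\R^2)$, so that $F$ is integrable on $[0,1]^2$) allows me to pull the sum inside and obtain
\[
  \langle f, f(\cdot - k)\rangle = \int_{[0,1]^2} F(\xi)\, e^{2\pi i \langle k, \xi\rangle}\, d\xi,
\]
i.e.\ $\langle f, f(\cdot - k)\rangle$ is (up to sign of $k$) the $k$-th Fourier coefficient of the $\Z^2$-periodic function $F$.

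Now the equivalence falls out from the uniqueness of Fourier series on the torus: the translates form an orthonormal system iff all Fourier coefficients of $F$ vanish except the one indexed by $k=0$, which equals $1$; equivalently $F(\xi) = 1$ for almost every $\xi \in [0,1]^2$, and hence, by $\Z^2$-periodicity of $F$, for almost every $\xi \in \R^2$. The only mild technical point is justifying the interchange of summation and integration, which is handled by Tonelli applied to the nonnegative integrand $|\hat f|^2$, and noting that the resulting $F$ is in $L^1([0,1]^2)$ with $\|F\|_{L^1([0,1]^2)} = \|f\|^2 < \infty$. No new tools beyond Plancherel and Fubini/Tonelli are needed.
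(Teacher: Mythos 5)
Your proof is correct and is the standard argument: reduce orthonormality to $\langle f, f(\cdot-k)\rangle=\delta_{k,0}$, pass to the Fourier side via Plancherel, periodize onto $[0,1]^2$ (with Tonelli handling the interchange since the integrand $|\hat f|^2$ is nonnegative and sums to $\|f\|^2<\infty$), and conclude by uniqueness of Fourier coefficients of the $L^1$ periodization. The paper does not prove this lemma at all --- it simply cites it as well known from Mallat --- and your argument is precisely the classical proof that the citation points to, so there is nothing to compare beyond noting that your write-up supplies the details the paper omits.
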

Finally, one more technical lemma is needed.
\begin{lem}\label{lemma:phis_l}
Let $A$ be a scaling matrix, $J \in \Z$, and $m=(m_1,m_2) \in \Z^2$. Further, let $\varphi \in \overline{\spann} \{ \phi_{J,m} \, : \, m \in \Z^2\}$
be compactly supported in $[-T_1,T_2]^2$, and let $L_1,L_2,L_3,L_4 \in \Z$ be such that
\[
    \varphi = \sum \limits_{m_1 = L_3}^{L_1} \sum \limits_{m_2 = L_4}^{L_2} \alpha_{m_1,m_2} \phi_{J,(m_1,m_2)}, \quad \alpha_m \in \C.
\]
Then, for all $l \in \Z^2$,
\[
    \langle \varphi, s_l^{(\varepsilon)} \rangle = \varepsilon |\det A|^{-J/2} \Phi (\varepsilon(A^{-J})^Tl) \phihat(\varepsilon(A^{-J})^Tl),
\]
where $s_l^{(\varepsilon)}$ is defined in (\ref{s_l}) and $\Phi$ is the trigonometric polynomial given by
\[
    \Phi(z) = \sum \limits_{\substack{ L_3 \leq m_1 \leq L_1, \\ L_4 \leq m_2 \leq L_2,}} \alpha_m e^{-2\pi i  \langle z, m \rangle}, \quad z \in \R^2, m = (m_1,m_2).
\]
\end{lem}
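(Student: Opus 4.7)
The plan is a direct computation built from two pieces: unraveling the inner product against the exponential sampling function to produce a Fourier transform of $\varphi$, and then expressing $\widehat{\varphi}$ in the factored form $\Phi \cdot \widehat{\phi}$ using the scaling/translation structure.

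First, since $\varphi$ is supported in $[-T_1,T_2]^2$, the characteristic function in \eqref{s_l} acts as the identity under the integral, so
\[
    \langle \varphi, s_l^{(\varepsilon)} \rangle = \varepsilon \int_{\R^2} \varphi(x) e^{-2\pi i \varepsilon \langle l, x\rangle} \, dx = \varepsilon\, \widehat{\varphi}(\varepsilon l).
\]
Hence it suffices to establish the identity
\[
    \widehat{\varphi}(\xi) = |\det A|^{-J/2}\, \Phi\bigl((A^{-J})^T \xi\bigr)\, \widehat{\phi}\bigl((A^{-J})^T \xi\bigr), \qquad \xi \in \R^2,
\]
and then specialize to $\xi = \varepsilon l$.

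Next, I would compute $\widehat{\phi_{J,m}}$ for a single scaled/translated atom $\phi_{J,m}(x) = |\det A|^{J/2} \phi(A^J x - m)$. Applying the change of variables $y = A^J x - m$ (with Jacobian $|\det A|^{-J}$) to
\[
    \widehat{\phi_{J,m}}(\xi) = |\det A|^{J/2} \int_{\R^2} \phi(A^J x - m)\, e^{-2\pi i \langle x, \xi\rangle}\, dx,
\]
and using the adjoint identity $\langle A^{-J} u, \xi\rangle = \langle u, (A^{-J})^T \xi\rangle$, one obtains
\[
    \widehat{\phi_{J,m}}(\xi) = |\det A|^{-J/2}\, e^{-2\pi i \langle m, (A^{-J})^T \xi\rangle}\, \widehat{\phi}\bigl((A^{-J})^T \xi\bigr).
\]
Summing against the coefficients $\alpha_{m_1,m_2}$ over the finite index range $L_3 \le m_1 \le L_1$, $L_4 \le m_2 \le L_2$ pulls the common factor $|\det A|^{-J/2}\,\widehat{\phi}((A^{-J})^T \xi)$ out, while the remaining sum of exponentials is precisely $\Phi((A^{-J})^T \xi)$ by the definition of $\Phi$.

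Finally, evaluating at $\xi = \varepsilon l$ and combining with the first reduction yields the claimed formula. There is no real obstacle here; the only care needed is in correctly handling the transpose that appears in the linear substitution (this is the origin of the $(A^{-J})^T$ in the statement), and in noting that since the sum defining $\varphi$ is finite, exchanging sum and Fourier transform requires no justification beyond linearity.
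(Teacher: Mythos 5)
Your proposal is correct and follows essentially the same route as the paper: reduce $\langle \varphi, s_l^{(\varepsilon)}\rangle$ to $\varepsilon\,\widehat{\varphi}(\varepsilon l)$ via the support condition, then use linearity and the Fourier transform of each atom $\phi_{J,m}$ to factor out $|\det A|^{-J/2}\widehat{\phi}((A^{-J})^T\varepsilon l)$ against the trigonometric polynomial $\Phi$. You actually spell out the change-of-variables step that produces the transpose, which the paper leaves implicit.
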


\begin{proof}
Since $\varphi$ is supported in $[-T_1,T_2]^2$, we obtain
\begin{align*}
    \langle \varphi, s_l^{(\varepsilon)} \rangle &= \varepsilon \int \limits_{\R^2} \varphi(x) e^{-2 \pi i \varepsilon \langle l, x \rangle}
    \cdot \chi_{[-T_1,T_2]^2} \, dx \\
    &= \varepsilon \widehat{\varphi}( \varepsilon l) \\
    &= \varepsilon  \left(\sum \limits_{m_1 = L_1}^{L_2} \sum \limits_{m_2 = L_3}^{L_4} \alpha_{m_1,m_2} \phi_{J,(m_1,m_2)}\right)^{\wedge}(\varepsilon l) \\
    &= \varepsilon  \sum \limits_{m_1 = L_1}^{L_2} \sum \limits_{m_2 = L_3}^{L_4} \alpha_{m_1,m_2} \left( \phi_{J,(m_1,m_2)}\right)^{\wedge}(\varepsilon l)\\
    &= \varepsilon |\det A|^{-J/2} \Phi (\varepsilon(A^{-J})^T l) \phihat(\varepsilon(A^{-J})^Tl).
\end{align*}
This proves the claim.
\end{proof}


\subsection{Theorem \ref{maintheoremdelta}}

The proof of Theorem \ref{maintheoremdelta} requires a particular estimate (Proposition \ref{pottstheorem}) for the norm of
trigonometric polynomials depending on their evaluations on a particular grid whose mesh norm and associated Voronoi
regions come also into play.


\subsubsection{Mesh Norm}

We start with the definition of a mesh norm for the situation we are faced with. A mesh norm can be interpreted as the
largest distance between neighboring nodes.

\begin{deff}
\label{defi:meshes}
Let $\Lambda\subset \Z^2$ be an integer grid of the form
\[
\Lambda := \left\{ (l_1,l_2) \in \Z^2 \, : \, -M_i \leq l_i \leq M_i, i=1,2 \right\}, \quad M_1,M_2 \in \N,
\]
and let $A$ be a scaling matrix. Set
\[
\Omega := \overline{\Lambda}^A :=  A ([-M_1,M_1] \times [-M_2,M_2]) \subset\R^2,
\]
and define a metric $\rho$ on $\Omega$ by
\[
    \rho: \Omega \times \Omega \longrightarrow \R^+, \quad (x,y) \mapsto \min \limits_{k \in A(\Z^2)} \| x - y + k\|_\infty.
\]
The \emph{mesh norm} of $\{ x_l \in \Omega \, : \, l \in \Lambda\}$ is then defined as
\[
    \delta := \max \limits_{x \in \Omega} \min \limits_{l \in \Lambda} \rho(x_l,x),
\]
where $x_l := A \cdot l, l \in \Lambda$ denote the \emph{nodes} in $\Omega$.
\end{deff}

Before we can continue, we require some notions and results on Voronoi regions and trigonometric polynomials. Our first result
shows that if the distance between the nodes $\{x_l\}_l$ converges to zero, the mesh norm of the entire grid $\Omega$ converges
to zero.

\begin{lem}\label{lemma:epsilondelta0}
Let $\varepsilon>0$ and $\Lambda \subset \Z^2$ be defined by
\[
\Lambda := \left\{ (l_1,l_2) \in \Z^2 \, : \, -M_i \leq l_i \leq M_i, i=1,2 \right\},
\]
where $M_1,M_2 \in \N$. Furthermore, suppose $A$ is a scaling matrix, and let $\Omega^{(\varepsilon)} := \overline{\Lambda}^{\varepsilon A}$.
If $\varepsilon \longrightarrow 0$, then $\delta^{(\varepsilon)} \longrightarrow 0$, where $\delta^{(\varepsilon)}$ denotes
 the mesh norm of $\{ \varepsilon A l \in \Omega^{(\varepsilon)} \, : \, l \in \Lambda \}$.
\end{lem}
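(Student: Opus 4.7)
The plan is to reduce the mesh norm $\delta^{(\varepsilon)}$ to the $\ell^\infty$-covering radius of $\Omega^{(\varepsilon)}$ by the lattice $\varepsilon A(\Z^2)$, and then to bound this covering radius by a constant multiple of $\varepsilon$ using the operator norm of $A$.

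Unpacking the definitions, we have
\[
\delta^{(\varepsilon)} = \max_{x \in \Omega^{(\varepsilon)}} \min_{l \in \Lambda} \min_{k \in \varepsilon A(\Z^2)} \|\varepsilon A l + k - x\|_\infty.
\]
The first step will be the observation that $\{\varepsilon A l + k : l \in \Lambda,\ k \in \varepsilon A(\Z^2)\} = \varepsilon A(\Z^2)$. One inclusion is trivial; for the other, note that $0 \in \Lambda$ (since $M_1, M_2 \geq 0$), so every lattice point $\varepsilon A n$ with $n \in \Z^2$ is realised by $l = 0$ and $k = \varepsilon A n$. Hence the two inner minimisations collapse to a single one and
\[
\delta^{(\varepsilon)} = \max_{x \in \Omega^{(\varepsilon)}}\, \min_{y \in \varepsilon A(\Z^2)} \|y - x\|_\infty.
\]

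To bound this, I would for an arbitrary $x \in \R^2$ pick $l_x \in \Z^2$ by rounding each component of $\varepsilon^{-1} A^{-1} x$ to a nearest integer, so that $\|l_x - \varepsilon^{-1} A^{-1} x\|_\infty \leq 1/2$. Applying $\varepsilon A$ and using the trivial estimate $\|A v\|_\infty \leq \|A\|_{\infty\to\infty}\,\|v\|_\infty$, where $\|A\|_{\infty\to\infty}$ is the maximum absolute row sum, this yields
\[
\|\varepsilon A l_x - x\|_\infty \leq \tfrac{\varepsilon}{2}\,\|A\|_{\infty\to\infty}.
\]
Since this bound is uniform in $x$, we conclude $\delta^{(\varepsilon)} \leq (\varepsilon/2)\|A\|_{\infty\to\infty} \to 0$ as $\varepsilon \to 0$.

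There is no real obstacle here; the only point that requires any care is the first step, namely noticing that $\Lambda$ plays no role in the minimisation once one also minimises over $k \in \varepsilon A(\Z^2)$. After that, the bound on the covering radius is a one-line rounding argument independent of $M_1, M_2$, which is in fact stronger than what the statement asks for.
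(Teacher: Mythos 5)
Your proof is correct, but it takes a genuinely different route from the paper's. The paper simply discards the inner minimisation over $k \in \varepsilon A(\Z^2)$ (taking $k=0$), bounds $\max_{x}\min_{l}\|x - x_l\|_\infty$ by $\diam \Omega^{(\varepsilon)} = \varepsilon \,\diam \overline{\Lambda}^{A}$, and concludes since $M_1, M_2$ are fixed. You instead exploit the $k$-minimisation: since $0 \in \Lambda$, the translates $\varepsilon A l + k$ sweep out the full lattice $\varepsilon A(\Z^2)$, reducing $\delta^{(\varepsilon)}$ to a covering radius, which the rounding argument bounds by $\tfrac{\varepsilon}{2}\|A\|_{\infty\to\infty}$ (note $A$ is invertible because its eigenvalues exceed one in modulus, so $A^{-1}x$ makes sense). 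The trade-off is clear: the paper's bound is shorter but degrades linearly in $M_1, M_2$, whereas yours is uniform in the grid size. For the lemma as stated (fixed $M$) both suffice, but your sharper estimate is closer to what the paper actually needs downstream -- assumption (\ref{assumption}) requires $\delta$ to stay below a threshold that \emph{shrinks} as $\mu(L_M)$ grows, and the explicit bound $\delta \leq \varepsilon/2^J$ invoked in Example \ref{wavehaarexp} for $A^{-J} = \diag(2^{-J},2^{-J})$ is exactly of your covering-radius type, not of the diameter type. So your argument is not only valid but recovers the quantitative control the paper later uses without proof.
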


\begin{proof}
If $\varepsilon \rightarrow 0$, then
\[
    \varepsilon A (\Lambda) = \{ \varepsilon A l \, : \, l \in \Lambda \} \longrightarrow \{ (0,0) \}
\]
with respect to the standard Euclidean distance. Furthermore, for $x_l := \varepsilon A l$, we have
\beq
\label{eq:lemma_mesh_proof1}
\lim \limits_{\varepsilon \rightarrow 0} \delta^{(\varepsilon)}
=  \lim \limits_{\varepsilon \rightarrow 0}  \max \limits_{x \in \Omega^{(\varepsilon)}} \min \limits_{l \in \Lambda} \min \limits_{k \in \varepsilon A(\Z^2)} \| x - x_l + k\|_\infty
\leq  \lim \limits_{\varepsilon \rightarrow 0}  \max \limits_{x \in \Omega^{(\varepsilon)}} \min \limits_{l \in \Lambda}  \| x - x_l \|_\infty.
\eeq
Since $x, x_l \in \Omega^{(\varepsilon)}$ for all $ l \in \Lambda$, we obtain
\[
    \lim \limits_{\varepsilon \rightarrow 0} 2 \max \limits_{x \in \Omega^{(\varepsilon)}} \min \limits_{l \in \Lambda}  \| x - x_l \|_\infty
    \leq \lim \limits_{\varepsilon \rightarrow 0}  \max \limits_{x,y \in \Omega^{(\varepsilon)}}  \| x - y \|_\infty.
\]
Inserting this estimate into \eqref{eq:lemma_mesh_proof1} yields
\[
\lim \limits_{\varepsilon \rightarrow 0} \delta^{(\varepsilon)}
\leq \lim \limits_{\varepsilon \rightarrow 0}  \max \limits_{x,y \in \Omega^{(\varepsilon)}}  \| x - y \|_\infty
    \leq \lim \limits_{\varepsilon \rightarrow 0} \diam \Omega^{(\varepsilon)}
    = \lim \limits_{\varepsilon \rightarrow 0} \diam \overline{\Lambda}^{\varepsilon A}
    = \lim \limits_{\varepsilon \rightarrow 0} \diam \varepsilon \overline{\Lambda}^A
    = \lim \limits_{\varepsilon \rightarrow 0} \varepsilon \underbrace{ \diam \overline{\Lambda}^A}_{<\infty} =0,
\]
where $\diam (F)$ denotes the diameter of a set $F\subset \R^d$, i.e.,
\[
    \diam (F) = \sup \limits_{x,y \in F} d_2(x,y),
\]
and $d_2$ denotes the Euclidean metric on $\R^d$. Since the mesh norm is always non-negative, the lemma is proven.
\end{proof}


\subsubsection{Voronoi Regions}

The next result studies the volume of the Voronoi regions associated to the previously considered grid $\Lambda$
with respect to the metric $\rho$ defined in Definition \ref{defi:meshes}. We start by formally defining the
notion of Voronoi region in our setting.

\begin{deff}
Let $\Omega \subset \R^2$, and let $(x_l)_{l \in \Lambda}$ be a sequence in $\R^2$. Then we refer to the
sets $(V_l)_{l \in \Lambda}$ defined by
\[
    V_l := \{ x \in \Omega \, : \, \rho(x,x_l) \leq \rho(x,x_k) \; \text{ for all } k \neq l \}
\]
as \emph{Voronoi regions with respect to $\Lambda$, $\Omega$, $\rho$, and $x_l$}.
\end{deff}

We can now state the previously announced result.

\begin{lem}\label{lemma:V_llessthanone}
Let $M_1,M_2 \in \N$ and
\[
\Lambda := \left\{ (l_1,l_2) \in \Z^2 \, : \, -M_i \leq l_i \leq M_i, i=1,2 \right\}.
\]
Moreover, let $\Omega =\overline{\Lambda}^{\Id}$, where $\Id$ denotes the $2\times2$-identity matrix, and let
$(V_l)_{l \in \Lambda}$ be the Voronoi regions with respect to $\Lambda$, $\Omega$, $\rho$, and $l$. Then, for all $l \in \Lambda$,
\[
\mu (V_l) \leq 1,
\]
where $\mu$ denotes the 2D Lebesgue measure.
\end{lem}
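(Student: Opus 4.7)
The plan is to exhibit, for each $l \in \Lambda$, an axis-aligned rectangle of two-dimensional Lebesgue measure at most $1$ that contains $V_l$; the bound $\mu(V_l) \leq 1$ then follows immediately.

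First, I would record a local agreement between $\rho$ and the ordinary $\ell^\infty$ distance: for $l \in \Z^2$ and $x \in \R^2$ with $\|x - l\|_\infty < 1/2$, the minimum defining $\rho(x, l) = \min_{k \in \Z^2} \|x - l + k\|_\infty$ is attained at $k = 0$ (any other choice contributes a term of norm at least $1 - 1/2 = 1/2$), so $\rho(x, l) = \|x - l\|_\infty$. In particular, when comparing $\rho$-distances between $x$ and two nearest-neighbor integer points $l, l \pm e_i \in \Lambda$ (with $e_i$ the $i$-th standard basis vector), the comparison is governed by the usual $\ell^\infty$ bisector located at distance $1/2$ from each.

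Next, I would show that $V_l \subseteq [l_1 - 1/2, l_1 + 1/2] \times [l_2 - 1/2, l_2 + 1/2]$. Fix a coordinate direction $i \in \{1,2\}$. If $l_i < M_i$, then $l + e_i \in \Lambda$, and the defining Voronoi inequality $\rho(x, x_l) \leq \rho(x, x_{l + e_i})$, unpacked via the local $\ell^\infty$ agreement above, forces $x_i \leq l_i + 1/2$. If instead $l_i = M_i$, there is no right neighbor in $\Lambda$, but membership $x \in \Omega = [-M_1,M_1] \times [-M_2,M_2]$ already yields $x_i \leq M_i = l_i \leq l_i + 1/2$. A symmetric argument using $l - e_i$ (or the left boundary of $\Omega$) gives $x_i \geq l_i - 1/2$.

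Combining the two coordinate bounds gives the claimed containment, and since the enclosing rectangle has sides of length at most $1$, we conclude $\mu(V_l) \leq 1$. The main delicate point is the reconciliation of the pseudometric $\rho$ (which is invariant under $\Z^2$-shifts) with the ordinary $\ell^\infty$-distance used in the nearest-neighbor comparison; once the sub-$1/2$ local agreement is secured, the remainder is elementary planar geometry, and boundary cells (where $l_i = \pm M_i$) are handled uniformly by the constraint $x \in \Omega$ rather than by a missing neighbor.
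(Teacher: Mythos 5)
Your overall strategy --- trapping each Voronoi cell inside an axis-aligned unit box centred at its node --- is the same as the paper's, which asserts that the cells are rectangles and bounds their width and height by the spacing of adjacent nodes; your version is more explicit about the bisector comparison and about the boundary cells. The problem is that the step in which you derive $x_i \le l_i + 1/2$ from the Voronoi inequality does not follow from your local-agreement observation, and this is a genuine gap. With $A=\Id$ the metric $\rho(x,y)=\min_{k\in\Z^2}\|x-y+k\|_\infty$ is the quotient metric of $\R^2/\Z^2$, so for an integer node $l$ one has $\rho(x,l)=\min_{k\in\Z^2}\|x+k\|_\infty$, a quantity that does not depend on $l$ at all. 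Hence $\rho(x,x_l)=\rho(x,x_{l+e_i})$ for every $x$, the Voronoi inequality is an identity, and it excludes nothing: for instance $x=(l_1+0.7,\,l_2)$ satisfies $\rho(x,x_l)=\rho(x,x_{l+e_1})=0.3$ (take $k=-e_1$ in the first minimum), so $x\in V_l$ even though $x_1>l_1+1/2$. Your local agreement $\rho(x,l)=\|x-l\|_\infty$ holds only on the set $\|x-l\|_\infty<1/2$ that you are trying to prove contains $V_l$, so invoking it inside the comparison is circular; taken literally, every cell $V_l$ is all of $\Omega$ and the asserted containment (indeed the lemma itself) fails.

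To be fair, this defect originates in the paper's definition of $\rho$ rather than in your reasoning alone: the paper's own proof concludes with $a_{l_1,l_2}\le\rho(x_{l_1+1,l_2},x_{l_1,l_2})$, and that right-hand side equals $0$ under the stated definition, so the published argument breaks at exactly the same point. The intended reading is evidently that the minimization over $k$ runs over a lattice coarse enough that distinct nodes of $\Lambda$ are not identified (e.g.\ the lattice of periods of the full box $\Omega$, so that $\rho$ is the torus metric of $\Omega$ itself and adjacent nodes are genuinely at distance $1$), or simply that $\rho=\|\cdot\|_\infty$ on $\Omega$ for the purposes of this lemma. Under any such reading your argument is sound and is in fact a cleaner, more complete version of the paper's proof: the local agreement then holds on a neighbourhood large enough to reach the bisector, the comparison with $l\pm e_i$ gives $|x_i-l_i|\le 1/2$, and the boundary cells are correctly absorbed by the constraint $x\in\Omega$. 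You should state the corrected metric explicitly before running the comparison; as written, the key implication is asserted rather than proved, and it is false for the $\rho$ you quote.
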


\begin{proof}
Notice that the Voronoi regions $(V_l)_{l \in \Lambda}$ are in fact rectangles, since the grid is an integer grid
with a constant step-size. Hence, for each $l \in \Lambda$,
\beq \label{eq:lemma_Voronoi_proof1}
    \mu (V_l) = a_{l_1,l_2} \cdot b_{l_1,l_2}, \quad a_{l_1,l_2},b_{l_1,l_2} \in \R,
\eeq
where $a_{l_1,l_2}$ denotes the width and $b_{l_1,l_2}$ the height of the rectangle $V_l$.

Towards a contradiction, assume that $V_l$ does contain two different nodes $x_k$ and $x_l$ with $k \neq l$.
This implies
\begin{align*}
     0\neq \rho(x_k,x_l) \leq \rho(x_l,x_l) =0,
\end{align*}
which is a contradiction. Thus, we can conclude that
\[
a_{l_1,l_2} \le \rho(x_{l_1+1,l_2},x_{l_1,l_2})
\quad \mbox{and} \quad
b_l \le \rho(x_{l_1,l_2},x_{l_1,l_2+1}),
\]
which, by \eqref{eq:lemma_Voronoi_proof1}, proves the claim.
\end{proof}

We next obtain a slight generalization of the previous result.

\begin{lem}\label{lemma:V_llessthandet}
Let $A$ be a (linear) bijective transformation acting on $\R^2$ with matrix representation
\[
    A= \begin{pmatrix} \lambda_1& \lambda_2 \\ \lambda_3 & \lambda_4 \end{pmatrix},
\]
and let $\Lambda$ and $(V_l)_{l \in \Lambda}$ be defined as in Lemma \ref{lemma:V_llessthanone}. Then
\[
    \mu (A(V_l)) \leq |\det A |.
\]
\end{lem}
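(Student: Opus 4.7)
The plan is to reduce this immediately to the preceding Lemma \ref{lemma:V_llessthanone} via the transformation rule for Lebesgue measure under a linear bijection. Recall the standard fact from measure theory: if $A\colon\R^2\to\R^2$ is a (linear) bijective transformation and $E\subset\R^2$ is Lebesgue measurable, then $A(E)$ is measurable and
\[
\mu(A(E)) = |\det A|\cdot \mu(E).
\]
This is just the change-of-variables formula applied to the constant-Jacobian map $A$, equivalently the fact that $|\det A|$ is the volume distortion factor of a linear map.

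With this tool in hand, I would argue as follows. The Voronoi regions $(V_l)_{l \in \Lambda}$ here are the same ones considered in Lemma \ref{lemma:V_llessthanone} (namely, Voronoi cells taken with respect to $\Lambda$, $\Omega = \overline{\Lambda}^{\Id}$, the metric $\rho$, and the nodes $l$), so that lemma applies verbatim and yields $\mu(V_l)\le 1$ for every $l\in\Lambda$. Applying the transformation formula to $E=V_l$, I would obtain
\[
\mu\bigl(A(V_l)\bigr) \;=\; |\det A|\cdot \mu(V_l) \;\le\; |\det A|\cdot 1 \;=\; |\det A|,
\]
which is exactly the claim.

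There is really no substantial obstacle: the content is entirely carried by the previous lemma. The only thing to be mildly careful about is to ensure we are interpreting the statement consistently, i.e., that the Voronoi regions $V_l$ in the present lemma are indeed those associated with the identity grid (as defined in Lemma \ref{lemma:V_llessthanone}) and that $A$ is only being applied to these regions as a set-theoretic image, so that the volume distortion formula $\mu(A(\cdot))=|\det A|\,\mu(\cdot)$ is directly applicable. Given this, the proof is a one-line application of the linear change-of-variables formula.
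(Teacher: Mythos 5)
Your proposal is correct and matches the paper's own argument, which simply invokes Lemma \ref{lemma:V_llessthanone} together with an integration by substitution (i.e., the linear change-of-variables identity $\mu(A(E)) = |\det A|\,\mu(E)$). You have just spelled out the one-line reduction more explicitly.
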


\begin{proof}
The result follows from Lemma \ref{lemma:V_llessthanone} by an integration by substitution.
\end{proof}


\subsubsection{Trigonometric Polynomials}

The next theorem is an adapted version of a result presented in \cite[Thm. 3.2]{PT} which again is a reformulation of a result proven by Gr\"ochenig in \cite{Gro92}.

\begin{prop}\label{pottstheorem}
Let $J, L_1,L_2, L_3, L_4 \in \Z$ such that $L_1 \geq L_3$ and $L_2 \geq L_4$, and let $\Phi$ a trigonometric polynomial
of the form
\[
    \Phi(z) =  \sum \limits_{m_1 = L_3}^{L_1} \sum \limits_{m_2 = L_4}^{L_2} \alpha_{m_1,m_2} e^{-2\pi i  \langle z, m \rangle}, \quad z \in \R^2, m = (m_1,m_2).
\]
Further, let the grid $\Lambda$ be defined as in Lemma \ref{lemma:V_llessthanone}, let $A$ be a scaling matrix, and let
$\Omega^{(\varepsilon)}  := \overline{\Lambda}^{\varepsilon A^{-J}}$ for $\varepsilon >0$. Set $x_l :=\varepsilon(A^{-J})^Tl$,
$l \in \Lambda$. If the mesh norm $\delta$ of $\{ x_l \in \Omega^{(\varepsilon)} \, : \, l \in \Lambda\}$ obeys
\begin{align*}
    \delta  < \frac{\log\left(\frac{1}{\sqrt{\mu ( \Omega^{(\varepsilon)})}}+1\right)}{2 \pi \max\{|L_1|,|L_2|,|L_3|,|L_4|\}},
\end{align*}
where $\mu$ is the 2D Lebesgue measure, then there exists a positive constant $C(\delta,L_1,L_2,L_3,L_4)$ such that
\[
    C(\delta,L_1,L_2,L_3,L_4) \| \Phi \|
    \leq \left( \sum \limits_{l \in \Lambda} \varepsilon^2 |\det A^{-J} | | \Phi(\varepsilon(A^{-J})^Tl|^2 \right)^{\frac{1}{2}},
\]
where
\[
    C(\delta, L_1,L_2,L_3,L_4) = \left( 1 - \left(e^{2\pi \delta \max\{|L_1|,|L_2|,|L_3|,|L_4|\}} -1\right) \sqrt{ \mu(\Omega^{(\varepsilon)})}\right)
\]
and
\begin{align*}
\| f \| := \left(\int_{\Omega^{(\varepsilon)}} |f(x)|^2 \, dx\right)^{1/2}, \quad f\in  L^2(\Omega^{(\varepsilon)}).
\end{align*}

\end{prop}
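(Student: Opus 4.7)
The plan is to mirror the classical Gröchenig-style sampling inequality: compare $\|\Phi\|$ to the discrete Riemann sum $\bigl(\sum_l\varepsilon^2|\det A^{-J}||\Phi(x_l)|^2\bigr)^{1/2}$ by means of a Voronoi decomposition of $\Omega^{(\varepsilon)}$, Lemma \ref{lemma:V_llessthandet} to control the cell volumes, and a modulus-of-continuity estimate for the trigonometric polynomial $\Phi$.

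First I would partition $\Omega^{(\varepsilon)}=\bigcup_{l\in\Lambda}V_l$ into Voronoi cells associated to the nodes $x_l$ and the metric $\rho$, and decompose $\Phi(x)=\Phi(x_l)+(\Phi(x)-\Phi(x_l))$ on each $V_l$. Applying Minkowski's inequality inside each cell and then in the $\ell^2$-sum over $l$ gives
$$
\|\Phi\| \le \Bigl(\sum_{l\in\Lambda}\mu(V_l)|\Phi(x_l)|^2\Bigr)^{1/2} + \Bigl(\sum_{l\in\Lambda}\int_{V_l}|\Phi(x)-\Phi(x_l)|^2\,dx\Bigr)^{1/2}.
$$
Lemma \ref{lemma:V_llessthandet}, applied to the bijection $\varepsilon(A^{-J})^T$, yields $\mu(V_l)\le \varepsilon^2|\det A^{-J}|$ for every $l$, so the first summand is bounded by $\bigl(\sum_{l}\varepsilon^2|\det A^{-J}||\Phi(x_l)|^2\bigr)^{1/2}$, which is exactly the Riemann sum appearing in the proposition.

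Set $L:=\max\{|L_1|,|L_2|,|L_3|,|L_4|\}$. The second summand is the modulus-of-continuity term and must be controlled by $(e^{2\pi\delta L}-1)\sqrt{\mu(\Omega^{(\varepsilon)})}\,\|\Phi\|$. Starting from the Fourier representation
$$
\Phi(x)-\Phi(x_l) = \sum_{m_1=L_3}^{L_1}\sum_{m_2=L_4}^{L_2}\alpha_m e^{-2\pi i\langle x_l,m\rangle}\bigl(e^{-2\pi i\langle x-x_l,m\rangle}-1\bigr),
$$
I would use the elementary Taylor-series bound $|e^{i\theta}-1|\le e^{|\theta|}-1$ together with $\rho(x,x_l)\le\delta$ for $x\in V_l$ and $\|m\|_\infty\le L$. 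Since $\Phi$ is a trigonometric polynomial with period respecting the lattice implicit in $\rho$, the wrap-around built into $\rho$ leaves the exponentials $e^{-2\pi i\langle x-x_l,m\rangle}$ unchanged, and the resulting pointwise bound can be integrated over $V_l$, summed over $\Lambda$, and combined with the natural $L^\infty/L^2$ comparison for $\Phi$ on $\Omega^{(\varepsilon)}$ to yield
$$
\Bigl(\sum_{l\in\Lambda}\int_{V_l}|\Phi(x)-\Phi(x_l)|^2\,dx\Bigr)^{1/2} \le (e^{2\pi\delta L}-1)\sqrt{\mu(\Omega^{(\varepsilon)})}\,\|\Phi\|.
$$

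Inserting these two estimates into the triangle inequality and solving for $\|\Phi\|$ gives
$$
\bigl(1-(e^{2\pi\delta L}-1)\sqrt{\mu(\Omega^{(\varepsilon)})}\bigr)\,\|\Phi\| \le \Bigl(\sum_{l\in\Lambda}\varepsilon^2|\det A^{-J}||\Phi(x_l)|^2\Bigr)^{1/2},
$$
which is precisely the claim with the stated $C(\delta,L_1,L_2,L_3,L_4)$; the mesh-norm hypothesis is exactly what makes this left-hand factor positive and thus the rearrangement meaningful. The main obstacle is the modulus-of-continuity step: the Taylor estimate on $|e^{i\theta}-1|$ is trivial, but transferring the resulting pointwise bound into an $L^2$-bound on $\Omega^{(\varepsilon)}$ that produces the factor $\sqrt{\mu(\Omega^{(\varepsilon)})}$ cleanly, and verifying compatibility with the wrap-around in $\rho$, is the delicate point; once this estimate is established, the Voronoi decomposition, the volume bound, and the algebraic rearrangement are essentially bookkeeping.
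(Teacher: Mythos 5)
Your proposal follows essentially the same route as the paper's proof: a Voronoi decomposition of $\Omega^{(\varepsilon)}$, the triangle/Minkowski inequality to split $\|\Phi\|$ into the Riemann sum plus a remainder, Lemma \ref{lemma:V_llessthandet} to bound the cell volumes by $\varepsilon^2|\det A^{-J}|$, and an exponential modulus-of-continuity estimate producing the factor $\left(e^{2\pi\delta \max\{|L_1|,|L_2|,|L_3|,|L_4|\}}-1\right)\sqrt{\mu(\Omega^{(\varepsilon)})}$. The only (cosmetic) difference is that the paper derives the modulus-of-continuity bound from a Taylor expansion of $\Phi$ combined with Bernstein's inequality --- which is exactly the $L^\infty/L^2$ comparison you flag as the delicate step --- rather than from the Fourier representation of $\Phi(x)-\Phi(x_l)$ and the inequality $|e^{i\theta}-1|\le e^{|\theta|}-1$; both arguments arrive at the same constant.
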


\begin{proof}
We first observe that by the hypotheses, the constant $C(\delta, L_1,L_2,L_3,L_4)$ is indeed positive.

Second, let $(V_l)_{l \in \Lambda}$ be the Voronoi regions with respect to $\Lambda$, $\Omega^{(\varepsilon)}$, and $\rho$.
For $l \in \Lambda$, we define the weights $\omega_l := \mu(V_l)$. As in Lemma \ref{lemma:V_llessthandet}, integration by substitution
yields
\[
    \omega_l = \mu(V_l) \leq \varepsilon^2 |\det A^{-J} |.
\]
Hence it suffices to prove the existence of a constant $C(\delta, L_1,L_2,L_3,L_4) >0$ such that
\begin{align}
    C(\delta,L_1,L_2,L_3,L_4) \| \Phi \| \leq \left( \sum \limits_{l \in \Lambda} \omega_l | \Phi(\varepsilon(A^{-J})^Tl|^2 \right)^{\frac{1}{2}},  \label{inequality:toprove}
\end{align}

For this, we first observe that
\beq \label{eq:prop_trig_proof1}
\left( \sum \limits_{l \in \Lambda} \omega_l | \Phi(\varepsilon(A^{-J})^Tl|^2 \right)^{\frac{1}{2}}
 = \left( \int \limits_{\Omega^{(\varepsilon)}} \sum \limits_{l \in \Lambda} | \Phi(\varepsilon(A^{-J})^Tl|^2 \right)^{\frac{1}{2}}
 = \left \| \sum \limits_{l \in \Lambda} \Phi(x_l)\chi_{V_l}  \right\|.
\eeq
By the (inverse) triangle inequality,
\begin{align}
    \left \| \sum \limits_{l \in \Lambda} \Phi(x_l)\chi_{V_l}  \right\|
    \geq \| \Phi \| - \left \| \Phi - \sum \limits_{l \in \Lambda}\Phi(x_l)\chi_{V_l}  \right\|. \label{inequality:phiinequality}
\end{align}
Hence, we require an upper bound for $\| \Phi - \sum_{l \in \Lambda}  \Phi(x_l)\chi_{V_l} \|$. By Taylor expansion and Bernstein's inequality,

\begin{align*}
    \left \| \Phi - \sum \limits_{l \in \Lambda} \Phi(x_l) \chi_{V_l}  \right\|^2
    & = \int \limits_{\Omega^{(\varepsilon)}} \left| \Phi(x) - \sum \limits_{l \in \Lambda}  \Phi(x_l)\chi_{V_l}(x) \right|^2 \, dx  \\
    & = \int \limits_{\Omega^{(\varepsilon)}} \left | \sum \limits_{l \in \Lambda}  \Phi(x) \chi_{V_l}(x) - \Phi(x_l)\chi_{V_l}(x) \right|^2 \, dx  \\
    & \leq  \sum \limits_{l \in \Lambda} \int \limits_{V_l} \left|   \Phi(x) - \Phi(x_l) \right|^2 \, dx  \\
    & \leq  \sum \limits_{l \in \Lambda} \int \limits_{V_l} \left|   \sum_{\alpha \in \N^2\setminus\{(0,0)\}}\frac{1}{\alpha!} \| x - x_l\|^\alpha | D^\alpha  \Phi(x)| \right|^2 \, dx  \\
    & \leq  \sum \limits_{l \in \Lambda} \int \limits_{V_l} \left|   \sum_{\alpha \in \N^2\setminus\{(0,0)\}}\frac{1}{\alpha!} \delta^{|\alpha|} ( \max\{|L_1|,|L_2|,|L_3|,|L_4|\}\pi)^{|\alpha|} \|\Phi\| \right|^2 \, dx  \\
    & \leq  \left|   \sum_{\alpha \in \N^2\setminus\{(0,0)\}}\frac{1}{\alpha!} \delta^{|\alpha|} ( \max\{|L_1|,|L_2|,|L_3|,|L_4|\} \pi)^{|\alpha|} \|\Phi\| \right|^2
    \cdot \sum \limits_{l \in \Lambda} \int \limits_{V_l} 1\, dx
\end{align*}
Since the Voronoi regions build a partition of $\Omega^{(\varepsilon)}$,
\[
\sum \limits_{l \in \Lambda} \mu(V_l) = \mu(\Omega^{(\varepsilon)}),
\]
and we can continue by
\beq
    \left \| \Phi - \sum \limits_{l \in \Lambda} \Phi(x_l) \chi_{V_l}  \right\|^2 \leq \left | \left(e^{2\pi \delta \max\{|L_1|,|L_2|,|L_3|,|L_4|\}} - 1\right) \right|^2
    \|\Phi\|^2 \mu(\Omega^{(\varepsilon)}).  \label{proof:estimate}
\eeq
Using (\ref{proof:estimate}) and (\ref{inequality:phiinequality}), we obtain
\begin{align*}
    \left \| \sum \limits_{l \in \Lambda} \Phi(x_l)\chi_{V_l}  \right\| &\geq \| \Phi \| -  \left\| \Phi - \sum \limits_{l \in \Lambda}\Phi(x_l)\chi_{V_l}  \right\| \\
    &\geq \| \Phi \| -  \left| \|\Phi\|  \left(e^{2\pi \delta \max\{|L_1|,|L_2|,|L_3|,|L_4|\}} - 1\right) \sqrt{ \mu(\Omega^{(\varepsilon)})}\right|\\
    &= \| \Phi \| \left( 1 - \left(e^{2\pi \delta \max\{|L_1|,|L_2|,|L_3|,|L_4|\}} -1\right) \sqrt{ \mu(\Omega^{(\varepsilon)})}\right).
\end{align*}
Combining this estimate with \eqref{eq:prop_trig_proof1} proves \eqref{inequality:toprove}.
\end{proof}

Finally, we can state and prove Theorem \ref{maintheoremdelta}, which is one main ingredient for the proof of Theorem \ref{maintheoremgeneral}
in Subsection \ref{subsec:wavelets_structure_proof}.

\begin{theorem}\label{maintheoremdelta}
Let $L_1,L_2, L_3, L_4 \in \Z$ such that $L_1 \geq L_3$ and $L_2 \geq L_4$ and $\alpha_{m_1,m_2} \in \C$, and let $\Phi$ be the trigonometric polynomial
\[
    \Phi ( \cdot, \cdot) = \sum_{m_1= L_3}^{L_1} \sum_{m_2= L_4}^{L_2} \alpha_{m_1,m_2} e^{-2 \pi i (\cdot)m_1}e^{-2 \pi i (\cdot)m_2}.
\]
Further, let $A=\begin{pmatrix} \lambda_1 & \lambda_2 \\ \lambda_3 & \lambda_4 \end{pmatrix}$ be a scaling matrix, $J \in \N$ a maximal
scale, and $\Lambda \subset \Z^2$ the grid defined by
\[
\Lambda := \left\{ (l_1,l_2) \in \Z^2 \, : \, -M_i \leq l_i \leq M_i, i=1,2 \right\},
\]
where $M_1,M_2 \in \N$. If there exists $\varepsilon \leq 1/(T_1 + T_2)$ independent of $J$ such that $\delta$ fulfills
\begin{align*}
    \delta  < \frac{\log\left(\frac{1}{\sqrt{\mu ( \Omega^{(\varepsilon)})}}+1\right)}{ 4 \pi \max\{|L_1|,|L_2|,|L_3|,|L_4|\}},
\end{align*}
 then there exists $\widetilde{S}^{(\theta)} =(S^{(\theta)}_1,S^{(\theta)}_2) \in \N \times \N$,
independent of $J$, such that, for all $\theta >1$, we have
\[
    \sum \limits_{l \in \Lambda} \varepsilon ^2 | \det A^{-J}| | \Phi(\varepsilon (A^{-J})^T l) |^2
    | \phihat(\varepsilon(A^{-J})^Tl)|^2 \geq  \frac{\|\Phi\|^2}{2\theta^2},
\]
for $M= (M_1,M_2) = \frac{1}{\varepsilon} (A^J)^T \widetilde{S}^{(\theta)}$ and scaling function $\phi$.
\end{theorem}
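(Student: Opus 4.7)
The plan is to deduce the estimate from a Parseval-type identity together with a tail bound controlled by Lemma \ref{lemma:orthosumone}, with Proposition \ref{pottstheorem} used to pass from the tail Riemann sum to a continuous integral.

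First, associate to $\Phi$ the function $\varphi := \sum_{m_1=L_3}^{L_1}\sum_{m_2=L_4}^{L_2}\alpha_{m_1,m_2}\phi_{J,(m_1,m_2)} \in L^2([-T_1,T_2]^2)$. By orthonormality of the scaling basis $\{\phi_{J,m}\}_{m\in\Z^2}$, $\|\varphi\|^2=\|\Phi\|^2$, and by Lemma \ref{lemma:phis_l},
\[
|\langle\varphi,s_l^{(\varepsilon)}\rangle|^2 = \varepsilon^2|\det A|^{-J}|\Phi(x_l)|^2|\widehat\phi(x_l)|^2, \qquad x_l := \varepsilon(A^{-J})^T l.
\]
A direct Poisson-summation argument, using that $\varphi$ is supported in $[-T_1,T_2]^2$ and that $\varepsilon \le 1/(T_1+T_2)$, gives the Parseval identity
\[
\sum_{l\in\Z^2}|\langle\varphi,s_l^{(\varepsilon)}\rangle|^2 = \|\varphi\|^2 = \|\Phi\|^2
\]
for the family $\{s_l^{(\varepsilon)}\}_{l\in\Z^2}$ on $L^2([-T_1,T_2]^2)$. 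Combining the two displays, Theorem \ref{maintheoremdelta} reduces to the uniform tail bound
\[
\sum_{l\notin\Lambda}\varepsilon^2|\det A|^{-J}|\Phi(x_l)|^2|\widehat\phi(x_l)|^2 \le \Bigl(1-\tfrac{1}{2\theta^2}\Bigr)\|\Phi\|^2.
\]

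For the tail I would compare the Riemann sum to $\int_{\R^2\setminus\Omega^{(\varepsilon)}}|\Phi|^2|\widehat\phi|^2$. Proposition \ref{pottstheorem} is the right tool for this comparison: since $|\Phi|^2$ is a trigonometric polynomial with maximum frequency bounded by $2\max\{|L_1|,|L_2|,|L_3|,|L_4|\}$, the hypothesis on $\delta$ (with factor $4\pi = 2\cdot 2\pi$, precisely twice the factor in Proposition \ref{pottstheorem}) controls the discretization error for $|\Phi|^2$ uniformly. The integral itself expands, using $1$-periodicity of $|\Phi|^2$ and Lemma \ref{lemma:orthosumone}, as
\[
\int_{\R^2\setminus\Omega^{(\varepsilon)}}|\Phi|^2|\widehat\phi|^2 = \int_{[0,1)^2}|\Phi(y)|^2\sum_{\substack{n\in\Z^2\\ y+n\notin\Omega^{(\varepsilon)}}}|\widehat\phi(y+n)|^2\,dy,
\]
and the inner sum tends to $0$ uniformly in $y\in[0,1)^2$ as $\widetilde S^{(\theta)}\to\infty$: the partial sums of $\sum_n|\widehat\phi(y+n)|^2$ form an increasing sequence of continuous functions on the compact torus $[0,1)^2$ with continuous pointwise limit $1$, so Dini's theorem supplies uniform convergence. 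This allows $\widetilde S^{(\theta)}$ to be chosen depending only on $\theta$ and $\widehat\phi$, and in particular independently of $J$, $\varepsilon$, and $\Phi$.

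The main obstacle is uniformity in $J$. The frequency bound $\max|L_i|$ (from Lemma \ref{lemma:V_0plusW_j}) grows with $J$, while the mesh norm $\delta$ shrinks; the product $\delta\cdot\max|L_i|$ must remain small so that the constant $C(\delta,L_1,\ldots,L_4)$ of Proposition \ref{pottstheorem} stays bounded away from $0$. The hypothesis on $\delta$ in Theorem \ref{maintheoremdelta} is precisely the calibration that secures this, and combining the resulting uniform quadrature error with the Dini-driven tail bound yields the required estimate.
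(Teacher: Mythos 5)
Your architecture is genuinely different from the paper's. The paper proves the lower bound directly: it reindexes the sum over $\Lambda$ as a sum over a fundamental domain of size $\frac{\lambda_1^{(J)}+\lambda_3^{(J)}}{\varepsilon}\times\frac{\lambda_2^{(J)}+\lambda_4^{(J)}}{\varepsilon}$ times period indices $(s,t)\in[-S,S)^2$, uses the exact $\Z^2$-periodicity of $\Phi$ under $l\mapsto l+\frac{1}{\varepsilon}(A^J)^T(s,t)$ to factor the sum, bounds the inner partial sum $\sum_{s,t}|\widehat\phi((A^{-J})^Tl+(s,t))|^2$ from below by $1/\theta$ via Lemma \ref{lemma:orthosumone}, and then applies Proposition \ref{pottstheorem} on the bounded fundamental domain. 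You instead invoke Parseval over all of $\Z^2$ and reduce to an upper bound on the tail $\sum_{l\notin\Lambda}$. The Parseval reduction itself is sound (for $\varepsilon\le 1/(T_1+T_2)$ the cube $[-T_1,T_2]^2$ sits inside a fundamental domain of $\frac{1}{\varepsilon}\Z^2$, so $\sum_{l\in\Z^2}\varepsilon^2|\widehat\varphi(\varepsilon l)|^2=\|\varphi\|^2=\sum_m|\alpha_m|^2$), and your Dini argument for choosing $\widetilde S^{(\theta)}$ uniformly in the base point is a nice touch -- the paper's appeal to Lemma \ref{lemma:orthosumone} at that step implicitly needs exactly this uniformity and does not spell it out.

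The gap is in the tail estimate. Proposition \ref{pottstheorem} cannot play the role you assign to it: it is a one-sided (lower) Marcinkiewicz--Zygmund inequality, valid only on the \emph{bounded} region $\Omega^{(\varepsilon)}$ with finitely many Voronoi cells, and its error constant carries the factor $\sqrt{\mu(\Omega^{(\varepsilon)})}$, which is infinite on $\R^2\setminus\Omega^{(\varepsilon)}$. Moreover the quantity you need to discretize is $|\Phi|^2|\widehat\phi|^2$, not $|\Phi|^2$; since $\widehat\phi$ is not a trigonometric polynomial, the Bernstein/Taylor quadrature control underlying Proposition \ref{pottstheorem} does not apply to the product, so the comparison of the tail Riemann sum with $\int_{\R^2\setminus\Omega^{(\varepsilon)}}|\Phi|^2|\widehat\phi|^2$ is unsupported. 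To repair your route you would have to periodize the \emph{discrete} tail sum directly -- writing $\Z^2\setminus\Lambda$ as (fundamental domain) $\times$ (periods outside $[-S,S)^2$) and using the exact periodicity $\Phi(\varepsilon(A^{-J})^Tl+(s,t))=\Phi(\varepsilon(A^{-J})^Tl)$, which is precisely the paper's central computation -- then bound the inner sum by $1-1/\theta'$ uniformly (your Dini argument), and finally control $\sum_{l\in\mathrm{fund}}\varepsilon^2|\det A^{-J}||\Phi(\varepsilon(A^{-J})^Tl)|^2$ from \emph{above} by a constant times $\|\Phi\|^2$. That last step needs the upper companion of Proposition \ref{pottstheorem}, i.e. $\|\sum_l\Phi(x_l)\chi_{V_l}\|\le\|\Phi\|+\|\Phi-\sum_l\Phi(x_l)\chi_{V_l}\|$, which follows from the same Bernstein estimate but is not what the paper states or what you cite. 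As written, the proposal does not close.
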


\begin{proof}
Since $0<\varepsilon<1$, there exists an $m\in \N$, such that
\[
\frac{1}{m+1} \le \varepsilon < \frac{1}{m}.
\]
Set $\varepsilon = \frac{1}{m+1}$, and note that \eqref{assumption} still holds, since the logarithm is monotonically increasing.

Next, for some $S \in \N$ to be determined later, let
\[
\begin{pmatrix} M_1 \\ M_2 \end{pmatrix} := \frac{1}{\varepsilon} (A^J)^T\begin{pmatrix} S \\
S \end{pmatrix} = \frac{1}{\varepsilon}\begin{pmatrix} \lambda^{(J)}_1 + \lambda^{(J)}_3 \\ \lambda^{(J)}_2 + \lambda^{(J)}_4 \end{pmatrix} S,
\quad \mbox{where as before }
    A^J=\begin{pmatrix} \lambda^{(J)}_1 & \lambda^{(J)}_2 \\ \lambda^{(J)}_3 & \lambda^{(J)}_4 \end{pmatrix}.
\]
For $l = (l_1,l_2) \in \Z^2$, we then obtain
\begin{eqnarray} \nonumber
\lefteqn{\sum \limits_{l \in \Lambda} \varepsilon ^2  | \det  A^{-J}| | \Phi(\varepsilon (A^{-J})^T l) |^2 | \phihat(\varepsilon(A^{-J})^Tl)|^2}\\ \nonumber
&=&\sum \limits_{l_1 = - M_1}^{M_1 -1} \sum \limits_{ l_2 = M_2 }^{M_2-1}  \varepsilon ^2  | \det  A^{-J}| | \Phi(\varepsilon (A^{-J})^T l) |^2 |
\phihat(\varepsilon(A^{-J})^Tl)|^2\\ \nonumber
&=& \sum \limits_{l_1=0 }^{\frac{\lambda^{(J)}_1 + \lambda^{(J)}_3}{\varepsilon} -1} \sum \limits_{l_2=0}^{\frac{\lambda^{(J)}_2 + \lambda^{(J)}_4}{\varepsilon} -1}
\sum \limits_{s=-S} ^{S-1}\sum \limits_{t=-S} ^{S-1}  \bigg(  \varepsilon ^2 | \det A^{-J}| \cdot
\left| \Phi\left(\varepsilon (A^{-J})^T \left(l+\frac{1}{\varepsilon} (A^J)^T\begin{pmatrix} s \\ t \end{pmatrix}\right) \right) \right|^2 \\
& &  \cdot \left| \phihat\left(\varepsilon (A^{-J})^T \left(l+\frac{1}{\varepsilon} (A^J)^T\begin{pmatrix} s \\ t \end{pmatrix}\right) \right) \right|^2\bigg), \label{proof:maintheoremAJ}
\end{eqnarray}
Integer periodicity of the trigonometric polynomial implies that, for all $s \in \Z$,
\[
    \Phi\left(\varepsilon (A^{-J})^T \left(l+\frac{1}{\varepsilon} (A^J)^T \begin{pmatrix} s \\ t \end{pmatrix} \right) \right)
    = \Phi\left(\varepsilon (A^{-J})^T l + \begin{pmatrix} s \\ t \end{pmatrix}\right)
    = \Phi\left(\varepsilon (A^{-J})^T l \right).
\]
Therefore, by (\ref{proof:maintheoremAJ})
\begin{align}
\sum \limits_{l \in \Lambda} \; &  \varepsilon ^2  | \det  A^{-J}| | \Phi(\varepsilon (A^{-J})^T l) |^2 | \phihat(\varepsilon(A^{-J})^Tl)|^2  \nonumber \\
    &=  \sum \limits_{l_1=0 }^{\frac{\lambda^{(J)}_1 + \lambda^{(J)}_3}{\varepsilon} -1}\sum \limits_{l_2=0}^{\frac{\lambda^{(J)}_2 + \lambda^{(J)}_4}{\varepsilon} -1}  \bigg(\varepsilon ^2 | \det A^{-J}| | \Phi(\varepsilon (A^{-J})^T l) |^2  \sum \limits_{s=-S} ^{S-1} \sum \limits_{t=-S} ^{S-1} \left| \phihat\left((A^{-J})^Tl+\begin{pmatrix} s \\ t \end{pmatrix}\right)\right|^2\bigg). \label{proof:mt2}
\end{align}
Let $\theta >1$. Then, by Lemma \ref{lemma:orthosumone}, there exists $S^{(\theta)} \in \N$ such that
\begin{align}
    \sum \limits_{s=-S^{(\theta)}} ^{S^{(\theta)}-1} \sum \limits_{t=-S} ^{S-1} \left| \phihat\left((A^{-J})^Tl+\begin{pmatrix} s \\ t \end{pmatrix}\right)\right|^2 \geq \frac{1}{\theta} \ . \label{proof:mt3}
\end{align}

We now choose $S := S^{(\theta)}$. Combining (\ref{proof:mt2}) and (\ref{proof:mt3}) yields
\begin{align}
\sum \limits_{l \in \Lambda} \ & \varepsilon ^2  | \det  A^{-J}| | \Phi(\varepsilon (A^{-J})^T l) |^2 | \phihat(\varepsilon(A^{-J})^Tl)|^2 \nonumber \\
    &\geq  \sum \limits_{l_1=0 }^{\frac{\lambda^{(J)}_1 + \lambda^{(J)}_3}{\varepsilon} -1}
    \sum \limits_{l_2=0}^{\frac{\lambda^{(J)}_2 + \lambda^{(J)}_4}{\varepsilon} -1}  \varepsilon ^2 | \det A^{-J}| | \Phi(\varepsilon (A^{-J})^T l) |^2
    \cdot \frac{1}{\theta} \ . \label{proof:mt4}
\end{align}
Next, we apply Proposition \ref{pottstheorem} to (\ref{proof:mt4}) to obtain
\begin{align*}
\sum \limits_{l \in \Lambda} \varepsilon ^2  | \det  A^{-J}|& | \Phi(\varepsilon (A^{-J})^T l) |^2 | \phihat(\varepsilon(A^{-J})^Tl)|^2 \nonumber \\
    &\geq \left(\|\Phi\|\left( 1 - \left(e^{2\pi \delta \max\{|L_1|,|L_2|,|L_3|,|L_4|\}} -1\right) \sqrt{ \mu(\Omega^{(\varepsilon)})}\right) \right)^2
    \cdot \frac{1}{\theta}
\end{align*}
Since
\begin{align*}
1+ &\left( 1- e^{2\pi \delta \max\{|L_1|,|L_2|,|L_3|,|L_4|\}}\right) \sqrt{\mu(\Omega^{(\varepsilon)})} \\
  &\geq 1+ \left( 1- e^{2\pi \max\{|L_1|,|L_2|,|L_3|,|L_4|\} \frac{\log\left(\frac{1}{\sqrt{\mu(\Omega^{(\varepsilon)})}}+1\right)}{4\pi  \max\{|L_1|,|L_2|,|L_3|,|L_4|\}} }\right) \sqrt{\mu(\Omega^{(\varepsilon)})}  \\
&=1+ \left( 1- \left(\sqrt{\mu(\Omega^{(\varepsilon)})}+1\right)^{1/2}\right)\sqrt{\mu(\Omega^{(\varepsilon)})}  \\
&=1+ \left( \sqrt{\mu(\Omega^{(\varepsilon)})}- \left(\mu(\Omega^{(\varepsilon)})^{3/2}+\mu(\Omega^{(\varepsilon)})\right)^{1/2}\right)   \\
&\geq 1 -\mu(\Omega^{(\varepsilon)})^{3/4}\\
&\geq 1- \varepsilon^{3/4} \left( \det A^{-J}\right)^{3/4}\\
&\geq \frac{1}{2}
\end{align*}
we can conclude
\begin{align*}
\sum \limits_{l \in \Lambda} \varepsilon ^2  | \det  A^{-J}|& | \Phi(\varepsilon (A^{-J})^T l) |^2 | \phihat(\varepsilon(A^{-J})^Tl)|^2  \geq \frac{\|\Phi\|^2}{2\theta}.
\end{align*}
Hence the theorem is proven for
\[
M_1 = \frac{ \lambda_1^{(J)} +  \lambda_3^{(J)}}{\varepsilon} S^{(\theta)}
\quad \mbox{and} \quad
M_2 = \frac{ \lambda_2^{(J)} +  \lambda_4^{(J)}}{\varepsilon} S^{(\theta)}.
\]
\end{proof}


\subsubsection{Lemma \ref{lemma:oneepsenough}}

We mention that this result is proved in the same manner as the one dimensional result from \cite{AHP2}.

\begin{lem}\label{lemma:oneepsenough}
For $\gamma >1$ and $\varepsilon_1, \varepsilon_2 \in(0,1/(T_1+T_2)]$, let $\theta(\gamma),C(\gamma)>1$ be such that
\begin{align}
\sqrt{\frac{1}{\theta(\gamma)^{2}} - \frac{16}{\pi^4(C(\gamma) -1)^2}} - \sqrt{1-\frac{1}{\theta(\gamma)}} > \frac{1}{\gamma}. \label{eq:epsclaim}
\end{align}
If there exists $M=(M_1,M_2) \in \N\times\N$ such that
\beq
\label{eq:hypothesis_oneepsenough}
    \inf \limits_{ \substack{\varphi\in\Rcal_N\\ \| \varphi\| =1}} \| P_{\Scal_M^{(\varepsilon_1)}} \varphi \| \geq \frac{1}{\theta(\gamma)},
\eeq
then
\[
    \inf \limits_{ \substack{\varphi\in\Rcal_N\\ \| \varphi\| =1}} \| P_{\Scal_K^{(\varepsilon_2)}} \varphi \| \geq \frac{1}{\gamma},
\]
whenever $K = (K_1,K_2) =\left( \left\lceil \frac{C(\gamma)M_1\varepsilon_1}{\varepsilon_2}\right\rceil,
\left\lceil \frac{C(\gamma)M_2\varepsilon_1}{\varepsilon_2}\right\rceil\right)$.
\end{lem}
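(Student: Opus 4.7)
The plan is to transfer the hypothesis \eqref{eq:hypothesis_oneepsenough} from the $\varepsilon_1$-grid to the $\varepsilon_2$-grid by passing through an integral of $|\widehat{\varphi}|^2$ over a box as an intermediate quantity, exploiting the fact that a compactly supported $\varphi$ has a smooth Fourier transform for which sums and integrals are close. This mirrors, in two dimensions, the one-dimensional argument from \cite{AHP2}.

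First I would fix $\varphi \in \Rcal_N$ with $\|\varphi\|=1$. Since $\varphi$ is supported in $[-T_1,T_2]^2$, the inner product takes the explicit form $\langle\varphi,s_l^{(\varepsilon)}\rangle = \varepsilon\,\widehat{\varphi}(\varepsilon l)$, so that (after accounting for the normalization constants from the possible non-orthonormality at subcritical rates $\varepsilon<1/(T_1+T_2)$) I can write
\[
\|P_{\Scal_L^{(\varepsilon)}}\varphi\|^2 \;=\; \varepsilon^2 \sum_{l\in I_L} |\widehat{\varphi}(\varepsilon l)|^2
\]
for $L = (L_1,L_2)$. Because $\varphi$ has compact support in a box of side at most $T_1+T_2$, the function $g(\xi):=|\widehat{\varphi}(\xi)|^2$ is smooth with all derivative bounds controlled by powers of $\max(T_1,T_2)$ and by $\|\varphi\|$. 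This is the quantitative input I would use for a Riemann-sum/quadrature comparison of the above sum with the integral $\int_{B^{(\varepsilon,L)}} g(\xi)\,d\xi$, where $B^{(\varepsilon,L)} = [-\varepsilon L_1,\varepsilon L_1]\times[-\varepsilon L_2,\varepsilon L_2]$.

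Next, the hypothesis \eqref{eq:hypothesis_oneepsenough} translates, modulo the quadrature error, into a lower bound of roughly $1/\theta^2$ on $\int_{B_1} g$, where $B_1 := B^{(\varepsilon_1,M)}$. The choice $K_i = \lceil C(\gamma) M_i \varepsilon_1/\varepsilon_2\rceil$ guarantees the strict inclusion $B_1 \subset B_2 := B^{(\varepsilon_2,K)}$ with a margin governed by $C(\gamma)-1$, so monotonicity of the integral immediately upgrades this to the same lower bound on $\int_{B_2} g$. Finally I would run the quadrature comparison in reverse on the enlarged box $B_2$, using the fact that enlargement by factor $C(\gamma)$ makes the perimeter-to-area ratio (and hence the relative quadrature error) shrink like $1/(C(\gamma)-1)$. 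Passing from $\int_{B_2} g$ back to $\varepsilon_2^2\sum_{l\in I_K}|\widehat{\varphi}(\varepsilon_2 l)|^2$ then produces the desired bound $\|P_{\Scal_K^{(\varepsilon_2)}}\varphi\| \geq 1/\gamma$.

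The main obstacle is the bookkeeping of the constants, which must reassemble into precisely the algebraic condition \eqref{eq:epsclaim}. The factor $16/\pi^4(C(\gamma)-1)^2$ is most naturally read as the square of a sinc-type quadrature error bound whose magnitude shrinks as $C(\gamma)$ grows, reflecting the enlargement of $B_2$ relative to $B_1$; the term $\sqrt{1-1/\theta}$ should enter via a triangle inequality $\|P_{\Scal_K^{(\varepsilon_2)}}\varphi\| \geq A - B$, where $A$ is the contribution of $\widehat{\varphi}$ on the small box $B_1$ transplanted to the $\varepsilon_2$-grid and $B$ controls the residual between that transplanted quantity and the true $\Scal_K^{(\varepsilon_2)}$ projection. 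Ensuring that the two error sources are simultaneously small — one scaling in $C(\gamma)$ and the other inherited from the tail of the $\varepsilon_1$ projection — and that they combine to exactly the form of \eqref{eq:epsclaim} is the delicate part; everything else is structural and follows the 1D template of \cite{AHP2}.
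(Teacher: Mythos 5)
Your proposal takes a genuinely different route from the paper, but it has a gap I do not see how to close. The paper never passes through integrals of $|\widehat{\varphi}|^2$ over boxes. Instead it works entirely with projections: it writes
\[
\| P_{\Scal_K^{(\varepsilon_2)}} \varphi \| \geq \| P_{\Scal_K^{(\varepsilon_2)}} P_{\Scal_M^{(\varepsilon_1)}} \varphi \| - \| P_{\Scal_K^{(\varepsilon_2)}} P_{\Scal_M^{(\varepsilon_1)}}^{\perp} \varphi \|,
\]
bounds the second term by $\sqrt{1-1/\theta(\gamma)}$ directly from the hypothesis, uses the Pythagorean identity $\| P_{\Scal_K^{(\varepsilon_2)}} P_{\Scal_M^{(\varepsilon_1)}} \varphi \|^2 = \| P_{\Scal_M^{(\varepsilon_1)}} \varphi \|^2 - \| P_{\Scal_K^{(\varepsilon_2)}}^{\perp} P_{\Scal_M^{(\varepsilon_1)}} \varphi \|^2$ for the first, and then controls $\| P_{\Scal_K^{(\varepsilon_2)}}^{\perp} P_{\Scal_M^{(\varepsilon_1)}} \varphi \|^2$ via Cauchy--Schwarz and the \emph{explicit cross-Gramian} $|\langle s_l^{(\varepsilon_1)}, s_j^{(\varepsilon_2)} \rangle| \leq \varepsilon_1\varepsilon_2/\bigl(\pi^2 |(\varepsilon_1 l_1 - \varepsilon_2 j_1)(\varepsilon_1 l_2 - \varepsilon_2 j_2)|\bigr)$, summed over $l \in I_M$, $j \notin I_K$. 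That double sum is where $16/\bigl(\pi^4(C(\gamma)-1)^2\bigr)$ comes from: it measures the leakage of $\Scal_M^{(\varepsilon_1)}$ outside $\Scal_K^{(\varepsilon_2)}$, not a quadrature error.

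The concrete gap in your plan is the two-way comparison between $\varepsilon^2 \sum_{l \in I_L} |\widehat{\varphi}(\varepsilon l)|^2$ and $\int_{B} |\widehat{\varphi}|^2$. The allowed rates go up to the critical value $\varepsilon = 1/(T_1+T_2)$, so there is no small spacing parameter: a Riemann-sum error estimate for $g = |\widehat{\varphi}|^2$ accumulates over every cell of the box and is of size $O\bigl(\varepsilon (T_1+T_2)\|\varphi\|^2\bigr) = O(1)$, i.e., comparable to the quantity $1/\theta(\gamma)^2$ you are trying to preserve. The ``perimeter-to-area'' heuristic does not rescue this, since the quadrature error is not a boundary effect and does not shrink as the box is enlarged by the factor $C(\gamma)$. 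Worse, the step you actually need in the forward direction --- from smallness of the discrete tail $\varepsilon_1^2\sum_{l \notin I_M}|\widehat{\varphi}(\varepsilon_1 l)|^2$ to smallness of the continuous tail $\int_{B_1^c}|\widehat{\varphi}|^2$ --- is a local Marcinkiewicz--Zygmund-type inequality for the band-limited function $\widehat{\varphi}$, which fails (or holds only with constants bounded away from $1$) at or near critical sampling; $\widehat{\varphi}$ can be large between the lattice points where it is sampled. So the error terms cannot be made to reassemble into \eqref{eq:epsclaim}. Your reading of the two error sources (a sinc-type term shrinking in $C(\gamma)$, plus a $\sqrt{1-1/\theta}$ term from a triangle inequality) correctly anticipates the \emph{shape} of the paper's estimate, but the mechanism producing the sinc decay is the inner product between the two exponential systems at rates $\varepsilon_1$ and $\varepsilon_2$, which your integral-comparison framework never brings into play.
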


\begin{proof}
First, notice that for any $\gamma \in (0,1)$, there exist $\theta(\gamma)$ and $C(\gamma)$ such that (\ref{eq:epsclaim}) is fulfilled.
Now, let $\gamma >1$ and $\varepsilon_2 >0$. Then, by \eqref{eq:hypothesis_oneepsenough},
\begin{align}\nonumber
    \inf \limits_{ \substack{\varphi\in\Rcal_N\\ \| \varphi\| =1}} \| P_{\Scal_K^{(\varepsilon_2)}} \varphi \|
    & \geq \inf \limits_{ \substack{\varphi\in\Rcal_N\\ \| \varphi\| =1}} \left(  \| P_{\Scal_K^{(\varepsilon_2)}} P_{\Scal_M^{(\varepsilon_1)}} \varphi \|
    - \| P_{\Scal_K^{(\varepsilon_2)}} P_{\Scal_M^{(\varepsilon_1)}}^{\perp} \varphi \| \right) \\ \label{eq:proof_oneepsenough_1}
    & \geq \inf \limits_{ \substack{\varphi\in\Rcal_N\\ \| \varphi\| =1}} \left(  \| P_{\Scal_K^{(\varepsilon_2)}} P_{\Scal_M^{(\varepsilon_1)}} \varphi \|
    - \sqrt{1 - \frac{1}{\theta(\gamma)}} \right).
\end{align}
In order to estimate $\| P_{\Scal_K^{(\varepsilon_2)}} P_{\Scal_M^{(\varepsilon_1)}} \varphi \|$, we decompose this term by
\beq
\label{eq:proof_oneepsenough_2}
 \| P_{\Scal_K^{(\varepsilon_2)}} P_{\Scal_M^{(\varepsilon_1)}} \varphi \|^2
 = \| P_{\Scal_M^{(\varepsilon_1)}} \varphi \|^2 - \| P_{\Scal_K^{(\varepsilon_2)}}^{\perp} P_{\Scal_M^{(\varepsilon_1)}} \varphi \|^2.
\eeq
Thus, we require a suitable upper bound for $\| P_{\Scal_K^{(\varepsilon_2)}}^{\perp} P_{\Scal_M^{(\varepsilon_1)}} \varphi \|^2$. For this,
let
\begin{align*}
    I_M &= \{l =(l_1, l_2) \in \Z^2 \, : \, -M_i/2 \leq l_i \leq M_i/2-1, i =1,2 \}, \\
    I_K &= \{j = (j_1, j_2) \in \Z^2 \, : \, -K_i/2 \leq j_i \leq K_i/2 -1, i =1,2 \},
\end{align*}
and for the complementary sets in $\Z^2$ we write
\begin{align*}
    I_M^c &= \{l=(l_1, l_2) \in \Z^2 \, : l \notin I_M \}, \\
    I_K^c &= \{j = (j_1, j_2) \in \Z^2 \, : \, j \notin I_K \}.
\end{align*}
Then, we have
\begin{align}\nonumber
    \| P_{\Scal_K^{(\varepsilon_2)}}^{\perp} P_{\Scal_M^{(\varepsilon_1)}} \varphi \|^2
    &= \left \| \sum_{j \in I_K^c} \langle P_{\Scal_M^{(\varepsilon_1)}} \varphi, s_j^{(\varepsilon_2)} \rangle s_j^{(\varepsilon_2)} \right \|^2 \\ \nonumber
    &=  \sum_{j \in I_K^c}  \left |  \sum_{l \in I_M}  \langle \varphi, s_l^{(\varepsilon_1)} \rangle
    \langle s_l^{(\varepsilon_1)}, s_j^{(\varepsilon_2)} \rangle \right |^2 \\ \nonumber
    &\leq \sum_{j\in I_K^c}  \left(   \sum_{l \in I_M}  | \langle \varphi, s_l^{(\varepsilon_1)} \rangle|^2
    \sum_{l \in I_M}  |\langle s_l^{(\varepsilon_1)}, s_j^{(\varepsilon_2)} \rangle |^2 \right)\\ \label{eq:proof_oneepsenough_3}
    &\leq \sum_{j \in I_K^c}  \sum_{l \in I_M}  |\langle s_l^{(\varepsilon_1)}, s_j^{(\varepsilon_2)} \rangle |^2 .
\end{align}
Now, for $\varepsilon = \max \{ \varepsilon_1, \varepsilon_2 \}$, we obtain
\begin{align*}
    | \langle s_l^{(\varepsilon_1)}, s_j^{(\varepsilon_2)} \rangle |
    &= \left| \varepsilon_1 \cdot \varepsilon_2 \cdot \int \limits_{\left [-\frac{1}{2\varepsilon}, \frac{1}{2\varepsilon} \right]^2}
    e^{2 \pi i \varepsilon_1 \langle l, x \rangle} e^{2 \pi i \varepsilon_2 \langle j, x \rangle} \, dx \right| \\
    &\leq \left | \frac{\varepsilon_1 \varepsilon_2}{\pi^2 (\varepsilon_1 l_1 - \varepsilon_2 j_1)( \varepsilon_1 l_2 -  \varepsilon_2 j_2)} \right|.
\end{align*}
Therefore, by using \eqref{eq:proof_oneepsenough_3},
\[
    \| P_{\Scal_K^{(\varepsilon_2)}}^{\perp} P_{\Scal_M^{(\varepsilon_1)}} \varphi \|^2 \leq \sum_{j \in I_K^c}  \sum_{l \in I_M} \left | \frac{\varepsilon_1 \varepsilon_2}{\pi^2 (\varepsilon_1 l_1 - \varepsilon_2 j_1)( \varepsilon_1 l_2 -  \varepsilon_2 j_2)} \right|^2 .
\]
Assuming $K_i = \frac{C(\gamma) M_i \varepsilon_1}{\varepsilon_2}, i = 1,2$, we can continue by
\begin{align*}
    \| P_{\Scal_K^{(\varepsilon_2)}}^{\perp} P_{\Scal_M^{(\varepsilon_1)}} \varphi \|^2 & \leq \left( \frac{\varepsilon_1 \varepsilon_2}{\pi^2} \right)^2 M_1 M_2 \sum_{(j_1,j_2) \notin I_K}  \frac{4}{ |( \varepsilon_1 \frac{M_1}{2} -  \varepsilon_2 j_1 )( \varepsilon_1 \frac{M_2}{2} -  \varepsilon_2 j_2)|^2}  \\
    & \leq \left( \frac{\varepsilon_1 \varepsilon_2}{\pi^2} \right)^2 M_1 M_2   \frac{16}{\varepsilon_2^2 |( \varepsilon_1 M_1 -  \varepsilon_2 K_1 )( \varepsilon_1 M_2 -  \varepsilon_2 K_2)|}  \\
    & \leq \left( \frac{\varepsilon_1 }{\pi^2} \right)^2 M_1 M_2   \frac{16}{ |( \varepsilon_1 M_1 -  \varepsilon_2 K_1 )( \varepsilon_1 M_2 -  \varepsilon_2 K_2)|}  \\
    & \leq \left( \frac{\varepsilon_1 }{\pi^2} \right)^2 M_1 M_2   \frac{16}{ |( \varepsilon_1 M_1 -   C(\gamma) M_1 \varepsilon_1 )( \varepsilon_1 M_2 -   C(\gamma) M_2 \varepsilon_1)|}  \\
    &= \frac{16}{\left( \pi^2 (C(\gamma )-1) \right)^2}.
\end{align*}
Thus, by \eqref{eq:proof_oneepsenough_2},
\[
    \| P_{\Scal_K^{(\varepsilon_2)}} P_{\Scal_M^{(\varepsilon_1)}} \varphi \|^2
    \geq \frac{1}{\theta(\gamma)^2} - \frac{16}{\left( \pi^2 (C(\gamma )-1) \right)^2}
\]
which, using \eqref{eq:proof_oneepsenough_1}, yields
\[
    \inf \limits_{ \substack{\varphi\in\Rcal_N\\ \| \varphi\| =1}} \| P_{\Scal_K^{(\varepsilon_2)}} \varphi
    \| \geq \sqrt{\frac{1}{\theta(\gamma)^2} - \frac{16}{\left( \pi^2 (C(\gamma )-1) \right)^2}} - \sqrt{1 - \frac{1}{\theta}}.
\]
The lemma is proved.
\end{proof}


\section{Proof of Theorem \ref{theorem:boundarywavelets}}
\label{sec:proof_b_wavelets}

The following lemma will be used in the upcoming proof Theorem \ref{theorem:boundarywavelets}. A one dimensional analogue can be found in \cite{AHP2}. The proof extends straightforwardly and we omit it here.
\begin{lem} \label{lemma:trig}
Let $A_1, A_2, A_3$, and $A_4 \in \Z, A_1 \leq A_2, A_3 \leq A_4$. Moreover, let $L_1, L_2 \in \N$ such that $2 L_1 \geq A_2 - A_1 +1$ and $2L_2 \geq A_4 - A_3 +1$. Then the trigonometric polynomial $$\Phi(z_1,z_2) =  \sum_{k = A_1}^{A_2} \sum_{l=A_3}^{A_4} \alpha_{k,l} e^{2\pi i k z_1} e^{2\pi i l z_2}$$ satisfies
\begin{align*}
	\sum \limits_{m=0}^{2L_1 - 1} \sum \limits_{n=0}^{2L_2 - 1}  \frac{1}{4L_1 L_2} \left| \Phi\left(\frac{m}{2L_1}, \frac{n}{2L_2}\right) \right|^2 =  \sum_{k = A_1}^{A_2} \sum_{l=A_3}^{A_4} |\alpha_{k,l} |^2 .
\end{align*}
\end{lem}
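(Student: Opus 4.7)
The plan is to prove this discrete Parseval-type identity by a direct computation: expand the square $|\Phi|^2 = \Phi \overline{\Phi}$ as a fourfold sum indexed by $(k,l)$ and $(k',l')$, swap the order of summation, and collapse the resulting geometric sums in $m$ and $n$ using discrete orthogonality of the characters. The key tool is the standard identity
\begin{equation*}
\sum_{m=0}^{2L_1-1} e^{2\pi i (k-k') m/(2L_1)} \;=\; \begin{cases} 2L_1 & \text{if } k \equiv k' \pmod{2L_1},\\ 0 & \text{otherwise,}\end{cases}
\end{equation*}
and the analogous identity for the $n$-sum with modulus $2L_2$.

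First I would write
\begin{equation*}
\left|\Phi\!\left(\tfrac{m}{2L_1}, \tfrac{n}{2L_2}\right)\right|^2
= \sum_{k,k'=A_1}^{A_2} \sum_{l,l'=A_3}^{A_4} \alpha_{k,l}\,\overline{\alpha_{k',l'}}\, e^{2\pi i (k-k') m/(2L_1)}\, e^{2\pi i (l-l') n/(2L_2)},
\end{equation*}
multiply by $1/(4L_1 L_2)$, and sum over $m \in \{0,\ldots, 2L_1-1\}$ and $n \in \{0,\ldots, 2L_2-1\}$. Swapping the finite sums (justified since everything is finite) factorizes the $m$-sum and the $n$-sum, and the orthogonality identity above reduces each to an indicator of $k \equiv k' \pmod{2L_1}$ and $l \equiv l' \pmod{2L_2}$, respectively, with weight $2L_1$ and $2L_2$ that cancel the prefactor $1/(4L_1 L_2)$.

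The crucial point — and the place where the hypothesis is used — is to upgrade these modular coincidences to exact equalities. Since $k, k' \in [A_1,A_2]$, we have $|k-k'| \leq A_2 - A_1 \leq 2L_1 - 1 < 2L_1$, so $k \equiv k' \pmod{2L_1}$ forces $k = k'$. The assumption $2L_2 \geq A_4 - A_3 + 1$ gives the analogous conclusion $l = l'$. Consequently, only the diagonal terms $(k,l) = (k',l')$ survive, yielding
\begin{equation*}
\sum_{m=0}^{2L_1-1}\sum_{n=0}^{2L_2-1} \frac{1}{4L_1 L_2}\left|\Phi\!\left(\tfrac{m}{2L_1},\tfrac{n}{2L_2}\right)\right|^2
= \sum_{k=A_1}^{A_2}\sum_{l=A_3}^{A_4} |\alpha_{k,l}|^2,
\end{equation*}
which is exactly the claim. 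There is no real obstacle here: the statement is a standard DFT-style Parseval identity in two variables, and the hypotheses are precisely what guarantees that the coefficient block $[A_1,A_2]\times[A_3,A_4]$ injects into the quotient $(\Z/2L_1\Z)\times(\Z/2L_2\Z)$, so that the aliasing which would normally occur beyond the Nyquist range is absent.
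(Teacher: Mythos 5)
Your proof is correct, and it is precisely the standard discrete-orthogonality (DFT Parseval) argument: the paper itself omits the proof of this lemma, stating only that the one-dimensional analogue from \cite{AHP2} "extends straightforwardly," and your computation is exactly that straightforward extension, with the hypotheses $2L_1 \geq A_2 - A_1 + 1$ and $2L_2 \geq A_4 - A_3 + 1$ used correctly to rule out aliasing so that only the diagonal terms $k = k'$, $l = l'$ survive.
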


\begin{proof}[Proof of Theorem \ref{theorem:boundarywavelets}]
Let $\varphi \in \Rcal_N$ such that $\| \varphi\| =1$. Then $\varphi$ can be expanded as
\begin{align}
    \varphi =  \sum_{n_1,n_2=0}^{2^{J_0}-1} \alpha_{J_0,(n_1,n_2)} \phi_{J_0,(n_1,n_2)} + \sum _{k=1}^3 \sum  _{j = J_0}^{J-1} \sum_{n_1,n_2 =0}^{2^j-1} \beta^k_{j,(n_1,n_2)} \psi_{j,(n_1,n_2)}^{\textint,k}  \label{eq1:proof}
\end{align}
We will now use the nestedness of the two dimensional MRA that is generated by the one dimensional wavelet system $\left\{  \{ \phiint_{J_0,m} \}_{m = 0, \ldots, 2^J-1}, \{ \psiint_{j,n} \}_{j \geq J_0,n = 0, \ldots, 2^j-1}\right\}$. In particular, we have
\begin{align*}
    V_j^{\textint,2} \subset V_{j+1}^{\textint,2} , \quad j\geq J_0,
\end{align*}
and
\begin{align*}
V_{j+1}^{\textint,2} = V_{j}^{\textint,2} \oplus W_{j}^{\textint,2}, \quad j\geq J_0,
\end{align*}
with $V_j^{\textint,2} = V_j^{\textint} \otimes V_j^{\textint}$ and $W_j^{\textint,2} = (V^{\textint}_j \otimes W^{\textint}_j) \oplus (W^{\textint}_j \otimes V^{\textint}_j) \oplus (W^{\textint}_j \otimes W^{\textint}_j)$.
Loosely speaking, due to the MRA embedding properties we can expand functions from the reconstructions space into scaling functions $(\phiint_{J,(n_1,n_2)})_{n_1,n_2}$ at highest scale. Since the left boundary functions can be constructed by translates of the initial scaling function $\phi$ and the right scaling function can be obtained by reflecting the left boundary functions. The reflected function will be denoted by $\phi^{\#}$. This gives us in (\ref{eq1:proof})
\begin{align}
	\varphi = \sum_{n_1,n_2=0}^{2^J-p-1} \alpha_{n_1,n_2} \phi(2^{J} \cdot -n)  + \sum_{n_1,n_2 =2^J-p}^{2^J} \beta_{n_1,n_2} \phi^{\#}(2^{J} \cdot -n),\label{eq2:proof}
\end{align}
where only finitely many $\alpha_{n_1,n_2}$ and $\beta_{n_1,n_2}$ are non-zero. Now, for any $l= (l_1,l_2)  \in \Z^2$ we obtain by basic properties of the Fourier transform
\begin{align}
\langle \varphi&, s_l^{(\varepsilon)} \rangle =   \sum_{n_1,n_2=0}^{2^J-p-1} \alpha_{n_1,n_2} \frac{\varepsilon}{ 2^{J}} e^{-2\pi i \varepsilon \langle n, 2^{-J}l\rangle}\phihat\left(\frac{\varepsilon}{ 2^{J}}l\right)  + \sum_{n_1,n_2 =2^J-p}^{2^J} \beta_{n_1,n_2} \frac{\varepsilon}{ 2^{J}} e^{-2\pi i \varepsilon \langle n, 2^{-J}l\rangle} \widehat{\phi^{\#}}\left(\frac{\varepsilon}{ 2^{J}}l\right). \label{eq3:proof}
\end{align}
For the sake of brevity, we shall write in the following
\begin{align}
	\Phi_1(z) &=  \sum_{n_1,n_2=0}^{2^J-p-1} \alpha_{n_1,n_2} \frac{\varepsilon}{ 2^{J}} e^{-2\pi i \varepsilon \langle n, z\rangle},\nonumber \\
	\Phi_2(z) &= \sum_{n_1,n_2 =2^J-p}^{2^J} \beta_{n_1,n_2} \frac{\varepsilon}{ 2^{J}} e^{-2\pi i \varepsilon \langle n, z\rangle}. \label{eq4:proof}
\end{align}

By our  assumptions on the scaling function
\begin{align}
    |\widehat{\phi}(\xi_1, \xi_2)| \lesssim \frac{1}{ (1+|\xi_1|)(1+ |\xi_2|)}, \label{eq5:proof}
\end{align}
and by the same argument
\begin{align}
    |\widehat{\phi^{\#}}(\xi_1, \xi_2)| \lesssim \frac{1}{(1+|\xi_1|)(1+ |\xi_2|)}. \label{eq6:proof}
\end{align}
Using (\ref{eq4:proof}), (\ref{eq5:proof}), and  (\ref{eq6:proof}) in (\ref{eq3:proof}) yields
\begin{align}
\sum _{l \notin I_M} |\langle \varphi , s_l^{(\varepsilon)} \rangle |^2 \nonumber &\leq \sum _{l \notin I_M} \left| \frac{\varepsilon}{ 2^{J}}\Phi_1\left(\frac{\varepsilon}{2^{J}}l\right) \phihat\left(\frac{\varepsilon}{ 2^{J}}l\right) \right|^2  +  \sum _{l \notin I_M}\left|\frac{\varepsilon}{ 2^{J}} \Phi_2\left(\frac{\varepsilon}{2^{J}}l\right) \widehat{\phi^{\#}}\left(\frac{\varepsilon}{ 2^{J}}l\right) \right|^2 +  \nonumber \\
&\makebox[0.5cm][c]{}+ 2 \left( \sum _{l \notin I_M} \left|\frac{\varepsilon}{ 2^{J}} \Phi_1\left(\frac{\varepsilon}{2^{J}}l\right) \phihat\left(\frac{\varepsilon}{ 2^{J}}l\right) \right|^2 \right)^{1/2} \left( \sum _{l \notin I_M} \left|\frac{\varepsilon}{ 2^{J}} \Phi_2\left(\frac{\varepsilon}{2^{J}}l\right) \widehat{\phi^{\#}}\left(\frac{\varepsilon}{ 2^{J}}l\right) \right|^2\right)^{1/2}. \label{eq7:proof}
\end{align}
We assume $2^J/\varepsilon \in \N$ and the number of samples $M=(M_1,M_2) \in \N \times \N$ is
\begin{align*}
    M_i = S \cdot \frac{2^J}{\varepsilon}, \quad i = 1,2
\end{align*}
where $S$ is some positive constant. Now, $(l_1,l_2) \notin I_M$ if

\textbf{Case \textrm{I}:} $|l_1| > M_1$ and $|l_2|<M_2$,

\textbf{Case \textrm{II}:} $|l_1| < M_1$ and $|l_2|>M_2$, or

\textbf{Case \textrm{III}:} $|l_1| > M_1$ and $|l_2|>M_2$.

It is sufficient to consider the sum for Case \textrm{I}. Case \textrm{II} can be obtained by symmetry and Case \textrm{III} yields a smaller sum. For $K = 2^J/\varepsilon$, we have
\begin{align}
	\sum _{|l_2| <M_2}& \sum _{|l_1| >M_1} \left| \frac{\varepsilon}{ 2^{J}} \Phi_1\left(\frac{\varepsilon}{2^{J}}l\right) \phihat\left(\frac{\varepsilon}{ 2^{J}}l\right) \right|^2 \nonumber \\
	&= \sum_{j_2}^{K-1} \sum_{j_1}^{K-1} \frac{1}{K}\frac{1}{K}\left| \Phi_1\left(\frac{j_1}{K}, \frac{j_2}{K}\right) \right|^2 \sum_{|s_2|<S} \sum_{|s_1|>S} \left|\phihat\left(\frac{j_1}{K} + s_1,\frac{j_2}{K} + s_2\right) \right|^2 \nonumber \\
	&\lesssim \sum_{j_2}^{K-1} \sum_{j_1}^{K-1} \frac{1}{K}\frac{1}{K}\left| \Phi_1\left(\frac{j_1}{K}, \frac{j_2}{K}\right) \right|^2 \sum_{|s_2|<S} \sum_{|s_1|>S} \frac{1}{(1 + |j_1/K + s_1|)^2}\frac{1}{(1 + |j_2/K + s_2|)^2} \nonumber \\
	&\leq C_1 \sum_{j_2}^{K-1} \sum_{j_1}^{K-1} \frac{1}{K}\frac{1}{K}\left| \Phi_1\left(\frac{j_1}{K}, \frac{j_2}{K}\right) \right|^2 \frac{1}{S}. \label{eq8:proof}
\end{align}
By Lemma \ref{lemma:trig} we obtain in (\ref{eq8:proof})
\begin{align}
	\sum _{|l_2| >M_2}& \sum _{|l_1| >M_1} \left| \frac{\varepsilon}{ 2^{J}} \Phi_1\left(\frac{\varepsilon}{2^{J}}l\right) \phihat\left(\frac{\varepsilon}{ 2^{J}}l\right) \right|^2 \leq C_1 \sum_{n_1,n_2=0}^{2^J-1} |\alpha_{n_1,n_2}|^2 \frac{1}{S}. \label{eq8:proof}
\end{align}
Since the functions form an orthonormal basis and $\| \varphi \| =1$ we have
\begin{align*}
	\sum_{n_1,n_2\in \N} |\alpha_{n_1,n_2}|^2 + \sum_{n_1,n_2\in \N} |\beta_{n_1,n_2}|^2 = 1
\end{align*}
and hence
\begin{align*}
	\sum_{n_1,n_2\in \N} |\alpha_{n_1,n_2}|^2 \leq 1
\end{align*}
Similarly, one shows
\begin{align}
	\sum _{|l_2| >M_2}& \sum _{|l_1| >M_1} \left| \frac{\varepsilon}{ 2^{J}} \Phi_2\left(\frac{\varepsilon}{2^{J}}l\right) \widehat{\phi^{\#}} \left(\frac{\varepsilon}{ 2^{J}}l\right) \right|^2 \leq C_2 \sum_{n_1,n_2 \in \N}|\beta_{n_1,n_2}|^2 \frac{1}{S}. \label{eq9:proof}
\end{align}
Therefore, in (\ref{eq7:proof}) we have that
\begin{align*}
\sum _{|l_2| <M_2} \sum _{|l_1| >M_1} |\langle \varphi , s_l^{(\varepsilon)} \rangle |^2 & \leq C_1 \sum_{n_1,n_2\in \N} |\alpha_{n_1,n_2}|^2 \frac{1}{S} + C_2 \sum_{n_1,n_2 \in \N} |\beta_{n_1,n_2}|^2 \frac{1}{S} +\\
&\qquad +  2 \left(C_1 \sum_{n_1,n_2\in \N} |\alpha_{n_1,n_2}|^2 \frac{1}{S}\right)^{1/2} \left(C_2\sum_{n_1,n_2\in \N}|\beta_{n_1,n_2}|^2 \frac{1}{S}\right)^{1/2} \nonumber \\
&\leq \left( \frac{\sqrt{2C_1}+\sqrt{2C_2}}{\sqrt{S}}\right)^2
\end{align*}
Now, for $\theta>1$ choosing $S$ large enough, such that
\begin{align*}
\left( \frac{\sqrt{2C_1}+\sqrt{2C_2}}{\sqrt{S}}\right)^2\leq \frac{\theta^2 - 1}{3\theta^2}
\end{align*}
gives the claim.
\end{proof}

\section{Acknowledgements}

The authors would like to thank Bogdan Roman for providing Figure 2 and Figure 3. BA acknowledges support from the NSF DMS grant 1318894. ACH acknowledges support from a Royal Society University Research Fellowship as well as the UK Engineering and Physical Sciences Research Council (EPSRC) grant EP/L003457/1. GK was supported in part by the Einstein Foundation Berlin, by Deutsche Forschungsgemeinschaft (DFG) Grant KU 1446/14, by the DFG Collaborative Research Center TRR 109 ``Discretization in Geometry and Dynamics'', and by the DFG Research Center {\sc Matheon} ``Mathematics for key technologies'' in Berlin. JM acknowledges support from the Berlin Mathematical School.

\bibliographystyle{abbrv}
\bibliography{paperref}

\end{document}